\newcommand\obullet[1]{\ThisStyle{\ensurestackMath{%
  \stackon[1pt]{\SavedStyle#1}{\SavedStyle\kern.6\LMpt\bullet}}}}
\newcommand\ocirc[1]{\ThisStyle{\ensurestackMath{%
  \stackon[1pt]{\SavedStyle#1}{\SavedStyle\kern.6\LMpt\circ}}}}
          \newtheorem{theorem}{Theorem}[section]
      \newtheorem{definition}[theorem]{Definition}
      \newtheorem{proposition}[theorem]{Proposition}
      \newtheorem{corollary}[theorem]{Corollary}
      \newtheorem{example}[theorem]{Example}
      \newtheorem{remark}[theorem]{Remark}
      \newcommand{\CC}{{\mathbb C}}
      \newcommand{\NN}{{\mathbb N}}
      \newcommand{\RR}{{\mathbb R}}
      \newcommand{\FF}{{\mathbb F}}
\DeclareMathOperator{\Span}{span}
      \newcommand{\cA}{{\mathcal A}}
      \newcommand{\cB}{{\mathcal B}}
      \newcommand{\cC}{{\mathcal C}}
      \newcommand{\cD}{{\mathcal D}}
      \newcommand{\cE}{{\mathcal E}}
      \newcommand{\cF}{{\mathcal F}}
      \newcommand{\cG}{{\mathcal G}}
      \newcommand{\cH}{{\mathcal H}}
      \newcommand{\cJ}{{\mathcal J}}
      \newcommand{\cK}{{\mathcal K}}
      \newcommand{\cL}{{\mathcal L}}
      \newcommand{\cM}{{\mathcal M}}
      \newcommand{\cQ}{{\mathcal Q}}
      \newcommand{\cN}{{\mathcal N}}
      \newcommand{\cP}{{\mathcal P}}
      \newcommand{\cS}{{\mathcal S}}
      \newcommand{\cY}{{\mathcal Y}}
      \newcommand{\rank}{\hbox{\rm{rank}}\,}
      \newdimen\expt
      \def\boxit#1{\setbox0\hbox{$\displaystyle{#1}$}
            \hbox{\lower.4\expt
       \hbox{\lower3\expt\hbox{\lower\dp0
            \hbox{\vbox{\hrule height.4\expt
       \hbox{\vrule width.4\expt\hskip3\expt
            \vbox{\vskip3\expt\box0\vskip2\expt}%
       \hskip3\expt\vrule width.4\expt}\hrule height.4\expt}}}}}}
\begin{document}
     
       \pagestyle{myheadings}
      \markboth{ Gelu Popescu}{ Noncommutative domains, universal operator models, and operator algebras, II }

      \title [ Noncommutative domains, universal operator models, and operator algebras, II  ]
      {   Noncommutative domains, universal operator models, and operator algebras, II    }
        \author{Gelu Popescu}
\date{March 2, 2023}
     \thanks{Research supported in part by  NSF grant DMS 1500922}
       \subjclass[2010]{Primary:   47A20; 47A15; 46L52,     Secondary:   47B37; 47A56.
   }
      \keywords{Noncommutative domains,  Universal operator models, Fock spaces,  Invariant subspaces,   Dilation theory, Commutant  lifting, Toeplitz-corona theorem, Boundary representation.}
      \address{Department of Mathematics, The University of Texas
      at San Antonio \\ San Antonio, TX 78249, USA}
      \email{\tt gelu.popescu@utsa.edu}

\begin{abstract}    In a recent  paper, we introduced and studied the class of admissible noncommutative domains  $\cD_{g^{-1}}(\cH)$  in $B(\cH)^n$ associated with admissible free holomorphic functions $g$ in noncommutative indeterminates $Z_1,\ldots, Z_n$. Each such a domain  admits a universal model  ${\bf W}:=(W_1,\ldots, W_n)$ of weighted left creation operators acting on the full Fock space  with $n$ generators. In the present paper, we continue the study of these domains and their universal models in connection  with the Hardy algebras and the $C^*$-algebras they generate.  We obtain  a Beurling type characterization of  the invariant subspaces of the universal model   ${\bf W}:=(W_1,\ldots, W_n)$ and develop a dilation theory for the elements of  the noncommutative  domain
 $\cD_{g^{-1}}(\cH)$. We also obtain results concerning the commutant lifting  and Toeplitz-corrona  in our setting as as well as some results on the  boundary property for universal models.
\end{abstract}

      \maketitle

\section*{Contents}
{\it

\quad Introduction
\begin{enumerate}
   \item[1.]  Universal operator  models associated with  admissible free holomorphic functions
  \item[2.]     Invariant subspaces   for  universal  operator models
   \item[3.]     Minimal dilations for pure $n$-tuples  of operators and dilation index
    \item[4.]     Dilation theory on  noncommutative domains
   \item[5.]      Commutant lifting  and Toeplitz-corona theorems
   \item[6.]     Boundary property for universal models 
   \end{enumerate}
    
    \quad References

}

\section*{Introduction}

A free analogue of Sz.-Nagy--Foia\c s   \cite{SzFBK-book}, \cite{S}  theory of contractions  
 for  the closed unit ball
$$
[B(\cH)^n]_1:=\{(X_1,\ldots, X_n)\in B(\cH)^n:\ X_1 X_1^*+\cdots +X_nX_n^*\leq I\}
$$  
  has been developed over the last thirty years (see \cite{F}, \cite{B}, \cite{Po-isometric}, \cite{Po-charact}, \cite{Po-von}, \cite{Po-funct}, \cite{Po-analytic},  \cite{Po-poisson}, \cite{APo1}, \cite{APo2}, \cite{Po-curvature},  \cite{Po-Nehari}, \cite{Po-entropy}, \cite{Po-varieties}, \cite{Po-unitary}, \cite{Po-automorphisms},   \cite{BV}, \cite{DP1}, \cite{DP2}, \cite{DKP}, \cite{SSS1}, \cite{SSS2} and the reference there in), where $B(\cH)$ is  the algebra of all bounded linear operators on a Hilbert space $\cH$.  In the commutative setting (i.e. $X_iX_j=X_jX_i$) 
   the corresponding commutative ball was studied by  Drury \cite{Dru}, Arveson \cite{Arv3-acta}, Bhattacharyya-Eschmeier-Sarkar \cite{BES}, Timotin \cite{T},  and by the author \cite{Po-poisson}, \cite{Po-varieties}. Some of these results we extended to more general commutative domains by   Agler  in \cite{Ag1}, \cite{Ag2},  Athavale \cite{At}, M\" uller \cite{M}, M\" uller-Vasilescu \cite{MV},
   Vasilescu \cite{Va},  and Curto-Vasilescu \cite{CV1},    S. Pott \cite{Pot},
     Olofsson \cite{O1}, \cite{O2},   Ball and Bolotnikov \cite{BB}, Sarkar \cite{S1}, \cite{S2}, and others.

 Let $\FF_n^+$ be the unital free semigroup on $n$ generators
$g_1,\ldots, g_n$ and the identity $g_0$. The length of $\alpha\in
\FF_n^+$ is defined by $|\alpha|:=0$ if $\alpha=g_0$  and
$|\alpha|:=k$ if
 $\alpha=g_{i_1}\cdots g_{i_k}$, where $i_1,\ldots, i_k\in \{1,\ldots, n\}$.  If $Z_1,\ldots, Z_n $ are noncommuting indeterminates   and $\alpha=g_{i_1}\cdots g_{i_k}$,    we
denote $Z_\alpha:= Z_{i_1}\cdots Z_{i_k}$  and $Z_{g_0}:=1$.
Given a positive regular noncommutative polynomial $p:=\sum_{\alpha\in \FF_n^+} a_\alpha
Z_\alpha$, i.e. 
$a_\alpha\geq 0$ for every $\alpha\in \FF_n^+$, \ $a_{g_0}=0$,
 \ and  $a_{g_i}>0$ for all $i\in \{1,\ldots, n\}$, and
   $m\in \NN:=\{1,2,\ldots \}$, we define  the  noncommutative domain
$$
{\cD}_p^m(\cH):=\left\{ X=(X_1,\ldots, X_n)\in B(\cH)^n: \ \Phi_{p,X}(I)\leq I \text{ and } 
( id-\Phi_{p,X})^m(I)\geq 0  \right\},
$$
where $\Phi_{p,X}:B(\cH)\to B(\cH)$ is the completely positive linear map
defined by $\Phi_{p,X}(Y):=\sum_{\alpha\in \FF_n^+} a_\alpha X_\alpha YX_\alpha^*$.
  When $m\geq 1$, $n\geq 1$,   the domain ${\cD}_p^m(\cH)$ was studied  in   \cite{Po-domains} (when $m=1$), and in \cite{Po-Berezin} (when $m\geq 2$).

In a recent paper  \cite{Po-Noncommutative domains}, we provided  large classes of noncommutative   domains  $\cD_{g^{-1}}(\cH)$  in $B(\cH)^n$  associated with {\it admissible free holomorphic functions}  $g= 1+\sum_{ |\alpha|\geq 1} b_\alpha
Z_\alpha$ with $b_\alpha>0$, and studied  these domains and their universal models  ${\bf W}:=(W_1,\ldots, W_n)$ of weighted left creation operators acting on the full Fock space  $F^2(H_n)$ in connection with the Hardy algebras and $C^*$-algebras  they generate. 
The present paper is a continuation of   \cite{Po-Noncommutative domains} aiming at characterizing the invariant subspaces of the universal model   ${\bf W}:=(W_1,\ldots, W_n)$ and at developing a dilation theory for the elements of  the noncommutative  domain
 $\cD_{g^{-1}}(\cH)$. We also obtain results concerning the commutant lifting  and Toeplitz-corrona  in our setting as  well as some results on the  boundary property for universal models.

In Section 1, we  consider some preliminaries  concerning    the class of {\it admissible free holomorphic functions for operator model theory}  which is  in one-to-one  correspondence with a class of universal models ${\bf W}=(W_1,\ldots, W_n)$  of weighted left creation operators acting on the full Fock space  $F^2(H_n)$ and   in one-to-one  correspondence with a class of  maximal noncommutative domains $\cD_{g^{-1}}(\cH)$  in $B(\cH)^n$. We introduce some notation and  recall from \cite{Po-Noncommutative domains}  some 
results regarding these  {\it admissible noncommutative domains}.

 In Section 2, we show that, for any admissible free holomorphic function, the corresponding  universal model ${\bf W}=(W_1,\ldots, W_n)$ admits a Beurling  \cite{Be}  type characterization  of the joint invariant subspaces of $W_1,\ldots, W_n$ (see Theorem \ref{Be-gen}). The uniqueness of the representation of the joint invariant subspaces using partially isometric multi-analytic operators is also addressed. 
 In particular, we prove that if  $g$ is an admissible free holomorphic function such that  $\frac{b_\alpha}{b_{g_i\alpha}}\leq 1$ for any $i\in \{1,\ldots,n\}$ and $\alpha\in \FF_n^+$, 
then    $\cM\subset F^2(H_n)\otimes \cK$ is a joint invariant subspace under  $W_1\otimes I_\cK,\ldots, W_n\otimes I_\cK$ if and only if 
there is a Hilbert space $\cD$ and partial isometry $\Psi:F^2(H_n)\otimes \cD\to F^2(H_n)\otimes \cK$ such that
$$
\cM=\Psi(F^2(H_n)\otimes \cD)
$$
and 
$$
\Psi (S_i\otimes I_\cD)=(W_i\otimes I_\cK) \Psi,\qquad i\in \{1,\ldots, n\},
$$
where $S_1,\ldots, S_n$ are the left creation operators on the full Fock space $F^2(H_n)$. Moreover, the Beurling-Lax-Halmos \cite{Be}, \cite{La}, \cite{Ha} type representation for  the joint invariant subspace  under $W_1\otimes I_\cK,\ldots, W_n\otimes I_\cK$ is essentially unique, that is,  if
$$
\Psi_1\left(F^2(H_n)\otimes \cD_1\right)=\Psi_2\left(F^2(H_n)\otimes \cD_2\right),
$$
where $\Psi_j:F^2(H_n)\otimes \cD_j\to F^2(H_n)\otimes \cK$, $j=1,2$, are partially isometric multi-analytic operators, then there is a partial isometry $V:\cE_1\to \cE_2$ such that $\Psi_1=\Psi_2(I \otimes V)$.  
 
In Section 3, we study the minimal dilations of the pure $n$-tuples of operators in the noncommutative domain $\cD_{g^{-1}}(\cH)$. In addition, we introduce the dilation index and discuss its significance in dilation theory. We show that, under certain natural  conditions, the minimal dilation is unique (see Theorem \ref{unique-minimal}) up to an isomorphism. 

Section 4 is devoted  to dilation theory for not necessarily pure  $n$-tules of operators  in noncommutative domains.   In particular,  if   $\cD_{g^{-1}} $ is  a radially pure domain  such   that $\sum_{k=0}^\infty\sum_{|\alpha|=k} a_\alpha W_\alpha W_\alpha^*$ converges in the operator norm, where  $g^{-1}=\sum_{\alpha\in \FF_n^+} a_\alpha Z_\alpha$,   and  $T=(T_1,\ldots, T_n)\in B(\cH)^n$, then 
$T\in  \cD_{g^{-1}}(\cH)$  if and only if  there is a Hilbert space $\cD$ and  a  Cuntz  \cite{Cu} type $*$-representation  $\pi:C^*({\bf W})\to \cK_\pi$ on a separable Hilbert space $\cK_\pi$    such that 
$$
T_i^*=[(W_i^*\otimes I_\cD)\bigoplus \pi(W_i)^*]_\cH,\qquad  i\in\{1,\ldots, n\}.
$$
  We mention that an operator model theory  in terms of characteristic functions associated with completely non-coisometric $n$-tuples of operators in noncommutative domains $\cD_{g^{-1}}(\cH)$ will be considered  in a future paper. 
  
In Section 5,  we  prove a commutant lifting theorem  (see Theorem \ref{commutant}) and use it to provide factorization results for multi-analytic operators and a Toeplitz-corona theorem (see Theorem \ref{corona2}).

In the last section of the paper,  we provide a large class  of admissible free holomorphic functions with the property that  the corresponding universal model ${\bf W}=(W_1,\ldots, W_n)$ has the boundary property (see  \cite{Arv1-acta}, \cite{Arv2-acta}), i.e.
the identity representation of the  $C^*$-algebra $C^*({\bf W})$ is a boundary representation for the operator space $\Span \{I,W_1,\ldots, W_n\}$.
We  also show that the full Fock space $F^2(H_n)$ is rigid with respect to  the universal model ${\bf W}$.

\bigskip

\section{Universal operator  models associated with  admissible free holomorphic functions}

In this section, we introduce the class of free holomorphic functions admissible for multi-variable 
operator model theory. Each such a function uniquely defines an $n$-tuple ${\bf W}=(W_1,\ldots, W_n)$
of weighted left creation operators acting on the full Fock space with $n$ generators, which turns out to be the universal model for a certain maximal noncommutative domain in $B(\cH)^n$.

 Let $H_n$ be an $n$-dimensional complex  Hilbert space with orthonormal
      basis
      $e_1,\dots,e_n$, where $n\in\NN$.        
      The full Fock space  of $H_n$ is defined by
      $$F^2(H_n):=\bigoplus_{k\geq 0} H_n^{\otimes k},$$
      where $H_n^{\otimes 0}:=\CC 1$ and $H_n^{\otimes k}$ is the (Hilbert)
      tensor product of $k$ copies of $H_n$.
      Define the {\it left creation
      operators} $S_i:F^2(H_n)\to F^2(H_n), \  i\in\{1,\dots, n\}$,  by
      $$
       S_i\varphi:=e_i\otimes\varphi, \quad  \varphi\in F^2(H_n),
      $$
      and  the {\it right creation operators}
      $R_i:F^2(H_n)\to F^2(H_n)$  by
      $
       R_i\varphi:=\varphi\otimes e_i$, \ $ \varphi\in F^2(H_n)$.  
              
Let $\FF_n^+$ be the unital free semigroup on $n$ generators
$g_1,\ldots, g_n$ and the identity $g_0$.  The length of $\alpha\in
\FF_n^+$ is defined by $|\alpha|:=0$ if $\alpha=g_0$  and
$|\alpha|:=k$ if
 $\alpha=g_{i_1}\cdots g_{i_k}$, where $i_1,\ldots, i_k\in \{1,\ldots, n\}$. We set  $e_\alpha:=e_{g_{i_1}}\otimes \cdots \otimes e_{g_{i_k}}$ and  $e_{g_0}=1$, and note that $\{e_\alpha\}_{\alpha\in \FF_n^+}$ is an othonormal basis for $F^2(H_n)$.
 Given a formal power series  $f:= \sum_{\alpha\in \FF_n^+} a_\alpha
Z_\alpha$, \ $a_\alpha\in \CC$, in indeterminates $Z_1,\ldots, Z_n$, we say that $f$ is
a {\it free holomorphic function} in a neighborhood of the origin if there is $\delta>0$ such that  the series
$\sum_{k=0}^\infty \sum_{|\alpha|=k} a_\alpha X_\alpha$ is convergent in norm  for any
$(X_1,\ldots, X_n)\in [B(\cH)^n]_\rho$ and any Hilbert space $\cH$,  where
$$[B(\cH)^n]_\rho:=\{(X_1,\ldots, X_n)\in B(\cH)^n: \
\|X_1X_1^*+\cdots + X_nX_n^*\|^{1/2}\leq \rho\}.
$$
We refer the reader to \cite{Po-holomorphic} for basic properties of free holomorphic functions.   
 Let $\{b_\alpha\}_{\alpha\in \FF_n^+}$ be  a collection of strictly positive  real numbers such that
\begin{equation*}
b_{g_0}=1\quad \text{ and }  \quad
\sup_{\alpha\in \FF_n^+} \frac{b_\alpha}{b_{g_i \alpha}}<\infty,\qquad i\in \{1,\ldots, n\},
\end{equation*}
and let  $g= 1+\sum_{ |\alpha|\geq 1} b_\alpha
Z_\alpha$ be the  associated formal power series.
We  define the {\it weighted left creation  operators}
$W_i:F^2(H_n)\to F^2(H_n)$, $i\in \{1,\ldots, n\}$,  associated with  $g$     by setting $W_i:=S_iD_i$, where
 $S_1,\ldots, S_n$ are the left creation operators on the full
 Fock space $F^2(H_n)$ and the diagonal operators $D_i:F^2(H_n)\to F^2(H_n)$
 are defined by setting
$$
D_ie_\alpha=\sqrt{\frac{b_\alpha}{b_{g_i \alpha}}} e_\alpha,\qquad
 \alpha\in \FF_n^+,
$$
where  $\{e_\alpha\}_{\alpha\in \FF_n^+}$ is the othonormal basis for $F^2(H_n)$. We associate with $g$ the $n$-tuple  ${\bf W}=(W_1,\ldots, W_n)$.

\begin{definition}
A free holomorphic function  $g= 1+\sum_{ |\alpha|\geq 1} b_\alpha
Z_\alpha$  is  said to be  {\it admissible} for  operator model theory if the following conditions are satisfied.
\begin{enumerate}
\item[(a)] The collection  $\{b_\alpha\}_{\alpha\in \FF_n^+}$  consists  of strictly positive  real numbers such that
\begin{equation*}
b_{g_0}=1\quad \text{ and }  \quad
\sup_{\alpha\in \FF_n^+} \frac{b_\alpha}{b_{g_i \alpha}}<\infty,\qquad i\in \{1,\ldots, n\}.
\end{equation*}
\item[(b)] The formal power series $g:= 1+\sum_{ |\alpha|\geq 1} b_\alpha
Z_\alpha$  is  a free holomorphic function in a neighborhood of the origin, which is  equivalent  to  
$
\limsup_{k\to\infty} \left( \sum_{|\alpha|=k}
b_\alpha^2\right)^{1/2k}<\infty. 
$    

\item[(c)]  The condition 
\begin{equation*} 
\sup_{k\in \NN}\left\|\sum_{p=0}^k\sum_{ |\alpha|= p} a_\alpha W_\alpha W_\alpha^*\right\|<\infty
\end{equation*}
holds, where  $g^{-1}=1+\sum_{|\alpha|\geq 1} a_\alpha Z_\alpha $ is the inverse of $g$ and ${\bf W}=(W_1,\ldots, W_n)$ is the $n$-tuple of   the  weighted left creation operators associated with $\{b_\alpha\}_{\alpha\in \FF_n^+}$.
\end{enumerate}

The $n$-tuple ${\bf W}=(W_1,\ldots, W_n)$ of weighted left creation operators is called universal model  associated with $g$.
\end{definition}

\begin{definition}
 Let  $g= 1+\sum_{ |\alpha|\geq 1} b_\alpha
Z_\alpha$ be  a free holomorphic function in a neighborhood of the origin, with $b_\alpha>0$, if $|\alpha|\geq 1$, and with   inverse  $g^{-1}=1+\sum_{ |\gamma|\geq 1} a_\gamma
Z_\gamma$.
The noncommutative   set  $\cD_{g^{-1}}(\cH)$ is defined as the set  of all $n$-tuples  of operators
$X=(X_1,\ldots, X_n) \in B(\cH)^n$ satisfying the following conditions.
\begin{enumerate}
\item[(i)]  $\Delta_{g^{-1}}(X,X^*)\geq 0$, where
$$
\Delta_{g^{-1}}(X,X^*):= \sum_{p=0}^\infty\sum_{ |\alpha|= p} a_\alpha  X_\alpha X_\alpha^*
$$
is convergent in the   weak operator topology. 
\item[(ii)]  
$\sum_{\alpha \in \FF_n^+} b_\alpha X_\alpha \Delta_{g^{-1}}(X,X^*) X_\alpha^*\leq I$, where the convergence is in the weak operator topology.
 
\item[(iii)]  $X$  is said to be a {\it Cuntz tuple} in 
$\cD_{g^{-1}}(\cH)$ if  $\Delta_{g^{-1}}(X,X^*)=0$.
\item[(iv)]  $X$ is said to be a {\it pure}  element in  
$\cD_{g^{-1}}(\cH)$,
if 
$$
\sum_{\alpha \in \FF_n^+} b_\alpha X_\alpha \Delta_{g^{-1}}(X,X^*) X_\alpha^*=I.
$$
 \item[(v)]  
 The {\it pure part} of   $\cD_{g^{-1}}(\cH)$ is defined by setting
$$
\cD^{pure}_{g^{-1}}(\cH):=\{X\in \cD_{g^{-1}}(\cH): X  \text{ is pure}\}.
$$
\end{enumerate}
\end{definition}
We call    $\cD_{g^{-1}}(\cH)$  {\it admissible noncommutative domain} if $g$ is an admissible free holomorphic function for operator model theory. 

\begin{definition}  
We say that $X=(X_1,\ldots, X_n)\in B(\cH)^n$ is  a {\it radial}  $n$-tuple with respect to the set $\cD_{g^{-1}}(\cH)$ if 
  there is  $\delta\in (0,1)$ such  that $rX:=(rX_1,\ldots, rX_n)\in \cD_{g^{-1}}(\cH)$ for every $r\in (\delta, 1)$.  If, in addition, $rX$ is pure for every $r\in (\delta, 1)$, we say that $X$ is  a radially pure $n$-tuple with respect to  $\cD_{g^{-1}}(\cH)$. The noncommutative set $ \cD_{g^{-1}}(\cH)$ is called  {\it radially pure} domain if   any $X\in \cD_{g^{-1}}(\cH)$ is  a radially pure $n$-tuple.
  \end{definition}

\begin{definition}  Let $T=(T_1,\ldots, T_n)\in B(\cH)^n$.
If there is    a Hilbert space $\cE$ and an isometric operator $V:\cH\to F^2(H_n)\otimes \cE$ such that 
$$
VT_i^*=(W_i^*\otimes I_\cE)V,\qquad i\in \{1,\ldots,n\},
$$ 
we say that ${\bf W}=(W_1,\ldots, W_n)$ is a {\it universal model} for $T$ . 
\end{definition}

We define the
{\it noncommutative Berezin kernel} associated with an arbitrary  $n$-tuple $T=(T_1,\ldots,
T_n)$  in $\cD_{g^{-1}}(\cH)$ to be  the operator $K_{g,T}:\cH\to
F^2(H_n)\otimes \overline{\Delta_{g^{-1}} (T,T^*) (\cH)}$  given by
\begin{equation*}
 K_{g,T}h:=\sum_{\alpha\in \FF_n^+} \sqrt{b_\alpha}
e_\alpha\otimes \Delta_{g^{-1}}(T,T^*)^{1/2} T_\alpha^* h,\quad h\in \cH.
\end{equation*}

We proved in   \cite{Po-Noncommutative domains} the following result.

\begin{theorem} \label{model}  Let $g$ be an admissible  free holomorphic function and let $T=(T_1,\ldots, T_n)\in B(\cH)^n$. Then  there is a Hilbert spaces $\cD$ such that 
$$
T_i^*=(W_i^*\otimes I_\cD)|_{\cH},\qquad i\in \{1,\ldots, n\},
$$
where $\cH$ is  identified with a coinvariant subspace  for the operators $W_1\otimes I_\cD,\ldots, W_n\otimes I_\cD$, if and only  if $T$  is    a  pure $n$-tuple in the noncommutative set  $\cD_{g^{-1}}(\cH)$.
In this case, we have
 $$
 K_{g,T} T_i^*=(W_i^*\otimes I_\cD)K_{g,T},\qquad i\in \{1,\ldots, n\},
 $$
 and the noncommutative Berezin kernel $K_{g,T}$ is an isometry.
\end{theorem}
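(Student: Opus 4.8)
The plan is to make the noncommutative Berezin kernel $K_{g,T}$ the engine of both implications, since it simultaneously carries the intertwining relation and detects purity. Throughout I would use the two structural facts about the universal model: the action of the adjoints on the Fock basis, $W_i^*e_{g_i\beta}=\sqrt{b_\beta/b_{g_i\beta}}\,e_\beta$ with $W_i^*e_\beta=0$ when $\beta$ does not begin with $g_i$, and the multiplicativity $T_{g_i\beta}^*=T_\beta^* T_i^*$.

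For the implication that a pure $n$-tuple admits the model, I would set $\cD:=\overline{\Delta_{g^{-1}}(T,T^*)(\cH)}$ and first check that $K_{g,T}:\cH\to F^2(H_n)\otimes\cD$ is well defined. The identity
\[
\|K_{g,T}h\|^2=\sum_{\alpha\in\FF_n^+}b_\alpha\langle T_\alpha\Delta_{g^{-1}}(T,T^*)T_\alpha^* h,h\rangle
\]
shows that condition (ii) in the definition of $\cD_{g^{-1}}(\cH)$ bounds the partial sums by $\|h\|^2$, forcing convergence, while the purity hypothesis $\sum_{\alpha}b_\alpha T_\alpha\Delta_{g^{-1}}(T,T^*)T_\alpha^*=I$ upgrades this to $\|K_{g,T}h\|=\|h\|$, so $K_{g,T}$ is an isometry. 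I would then obtain the intertwining relation by a direct reindexing of the defining series, using the two structural facts above, to get
\[
(W_i^*\otimes I_\cD)K_{g,T}=K_{g,T}T_i^*,\qquad i\in\{1,\ldots,n\}.
\]
This shows $\ran K_{g,T}$ is invariant under each $W_i^*\otimes I_\cD$, hence coinvariant for $W_1\otimes I_\cD,\ldots,W_n\otimes I_\cD$; identifying $\cH$ with the closed subspace $\ran K_{g,T}$ through the isometry $K_{g,T}$ yields $T_i^*=(W_i^*\otimes I_\cD)|_\cH$, as required.

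For the converse I would exploit that the universal model is itself pure. The key computation is that $W_\alpha W_\alpha^*$ is diagonal in the basis $\{e_\gamma\}$ with $W_\alpha W_\alpha^* e_\gamma=(b_\beta/b_\gamma)e_\gamma$ when $\gamma=\alpha\beta$ and $0$ otherwise; summing against the coefficients $a_\alpha$ of $g^{-1}$ and recognizing $\sum_{\alpha\beta=\gamma}a_\alpha b_\beta$ as the $\gamma$-coefficient of the product $g^{-1}g=1$, I get $\Delta_{g^{-1}}({\bf W},{\bf W}^*)=P_{\CC 1}$ and then $\sum_\alpha b_\alpha W_\alpha P_{\CC 1}W_\alpha^*=I$, so ${\bf W}$ (and hence ${\bf W}\otimes I_\cD$) is pure. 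Now assuming $\cH$ is coinvariant with $T_i^*=(W_i^*\otimes I_\cD)|_\cH$, coinvariance propagates to words, giving $T_\alpha^*=(W_\alpha^*\otimes I_\cD)|_\cH$; comparing quadratic forms on $h\in\cH$ gives $\Delta_{g^{-1}}(T,T^*)=P_\cH(P_{\CC 1}\otimes I_\cD)|_\cH\geq 0$, and restricting the purity identity $\sum_\alpha b_\alpha(W_\alpha\otimes I_\cD)(P_{\CC 1}\otimes I_\cD)(W_\alpha^*\otimes I_\cD)=I$ to $h\in\cH$ produces $\sum_\alpha b_\alpha\langle T_\alpha\Delta_{g^{-1}}(T,T^*)T_\alpha^* h,h\rangle=\|h\|^2$, so $T$ is pure in $\cD_{g^{-1}}(\cH)$.

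I expect the main obstacle to be the converse's reliance on the purity of the universal model, which rests on the precise arithmetic relationship between the defining weights $\{b_\alpha\}$ and the coefficients $\{a_\alpha\}$ of $g^{-1}$ encoded in $g^{-1}g=1$; making the weak-operator convergence of $\Delta_{g^{-1}}({\bf W},{\bf W}^*)$ rigorous requires admissibility condition (c) to guarantee uniform boundedness of the partial sums, after which the entrywise diagonal computation identifies the limit as $P_{\CC 1}$. By contrast, the isometry claim in the forward direction is comparatively routine once purity is in hand, and the intertwining relation is essentially a bookkeeping argument on the Fock-space basis.
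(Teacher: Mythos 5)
Your proof is correct, and it is essentially the intended approach: the paper does not actually reprove Theorem~\ref{model} (it recalls it from \cite{Po-Noncommutative domains}), but the two ingredients it does recall --- the Berezin kernel $K_{g,T}$ and Proposition~\ref{W}, i.e.\ $\Delta_{g^{-1}}({\bf W},{\bf W}^*)=P_{\CC 1}$ and $\sum_{\alpha\in\FF_n^+}b_\alpha W_\alpha P_{\CC 1}W_\alpha^*=I$ --- are exactly what you rederive and deploy in both directions. In particular your converse is sound at its one delicate point: for $h\in\cH$ one has $\left<\Delta_{g^{-1}}(T,T^*)T_\alpha^*h,T_\alpha^*h\right>=\|(P_{\CC 1}\otimes I_\cD)T_\alpha^*h\|^2$ because $T_\alpha^*h\in\cH$ and $P_{\CC 1}\otimes I_\cD$ is a projection, so no spurious middle compression by $P_\cH$ enters and the purity identity for ${\bf W}\otimes I_\cD$ restricts correctly to give $\sum_\alpha b_\alpha T_\alpha\Delta_{g^{-1}}(T,T^*)T_\alpha^*=I_\cH$.
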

 
If  ${\bf W}=(W_1,\ldots, W_n)$ is the universal model  associated with $g$, we  introduce the {\it noncommutative domain algebra} $\cA(g)$ as the norm-closed non-self-adjoint algebra generated by $W_1,\ldots, W_n$ and the the identity.  
 
  Now, we recall from \cite{Po-Noncommutative domains}
  several classes of admissible free holomorphic functions  for  operator model theory and the associated noncommutative domains. The examples presented  here will be referred to throughout the paper.

\begin{definition}  We say that  the noncommutative  set $\cD_{g^{-1}}$ is a regular domain if $g$ is a  free holomorphic function in a neighborhood of the origin   such that
$$
g^{-1} =1+\sum_{\alpha\in \FF_n^+, |\alpha|\geq 1} a_\alpha Z_\alpha
$$
for some coefficients $a_\alpha\leq  0$ and $a_{g_i}<0$ for every $i\in \{1,\ldots, n\}$. 
\end{definition}

\begin{example}  \label{Bergman}
 For each $s\in (0,\infty)$, consider  the formal power series
$$
g_s:=1+\sum_{k=1}^\infty\left(\begin{matrix} s+k-1 \\k \end{matrix}\right)(Z_1+\cdots  +Z_n)^k.
$$ 
  If $s\in (0,1]$, 
  $\cD_{g_s^{-1}}$ is a regular domain.
When  $s\in \NN$, the corresponding noncommutative domain $\cD_{g_s^{-1}}$ was extensively studied in  \cite{Po-Berezin}, \cite{Po-domains}, and \cite{Po-Bergman}. Note that
$$
g_s^{-1}=\left[1-(Z_1+\cdots +Z_n)\right]^s=\sum_{k=0}^\infty (-1)^k\left(\begin{matrix} s \\k \end{matrix}\right)(Z_1+\cdots + Z_n)^k.
$$
Therefore 
$$a_\alpha=(-1)^{|\alpha|} \left(\begin{matrix} s \\|\alpha| \end{matrix}\right)=(-1)^{|\alpha|}\frac{s(s-1)\cdots (s-|\alpha|+1)}{|\alpha| !},\qquad \alpha\in \FF_n^+, |\alpha|\geq 1.
$$
It is easy to see  that there is $N_s\geq 2$ such that $a_\alpha\geq 0$ $($or $a_\alpha\leq 0)$ for every $\alpha\in \FF_n^+$ with $|\alpha|>N_s$.
According to  Theorem 4.7 from \cite{Po-Noncommutative domains},   $g_s$ is an admissible free holomorphic function. In addition, if  $s\in [1,\infty)$, then 
 $
\sum_{k=0}^\infty \left|\left(\begin{matrix} s \\k \end{matrix}\right)\right|<\infty$   and  
    the series $\sum_{\alpha\in \FF_n^+} a_\alpha W_\alpha W_\alpha^*$ converges in the operator norm topology.

 \end{example}

\begin{example} \label{Dirichlet}
For each $s\in \RR$  the free holomorphic function
$$
\xi_s:=1+\sum_{\alpha\in \FF_n^+, |\alpha|\geq 1} (|\alpha|+1)^s Z_\alpha
$$
is admissible.
If $s\leq 0$,   $\cD_{\xi_s^{-1}}$ is a regular domain.  
\end{example}
 For more examples of admissible free holomorphic functions we refer the reader to our recent paper 
 \cite{Po-Noncommutative domains}.
  \bigskip

\section{Invariant subspaces   for  universal  operator models}

In this section, we show that, for any admissible free holomorphic function, the corresponding  universal model ${\bf W}=(W_1,\ldots, W_n)$ admits a Beurling type characterization  of the joint invariant subspaces of $W_1,\ldots, W_n$. The uniqueness of the representation of the joint invariant subspaces using partially isometric multi-analytic operators is also addressed. All these results apply, in particular, to the universal models associated with     the free holomorphic  functions $g_s$, $ s\in(0,\infty)$ (see Example \ref{Bergman}).

First, we recall from  \cite{Po-Noncommutative domains} the following result on the universal model  ${\bf W}=(W_1,\ldots, W_n)$  associated with  an admissible free holomorphic function.

\begin{proposition}   \label{W}  Let $g= 1+\sum_{ |\alpha|\geq 1} b_\alpha
Z_\alpha$ be an admissible free holomorphic function with inverse $g^{-1}=1+\sum_{ |\gamma|\geq 1} a_\gamma
Z_\gamma$ and let ${\bf W}=(W_1,\ldots, W_n)$ be  the corresponding  weighted left creation operators associated with $g$.
Then 
$$
\Delta_{g^{-1}}({\bf W},{\bf W}^*):=\sum_{p=0}^\infty\sum_{ |\alpha|= p} a_\alpha  W_\alpha W_\alpha^*
$$
converges in the  strong operator topology and  $\Delta_{g^{-1}}({\bf W},{\bf W}^*)=P_{\CC 1}$, the orthogonal projection of $F^2(H_n)$ onto the constants $\CC 1$. Moreover, we have
$$
\sum_{\alpha \in \FF_n^+} b_\alpha W_\alpha \Delta_{g^{-1}}({\bf W},{\bf W}^*) W_\alpha^*=I,
$$
where the convergence is in the strong operator topology.
\end{proposition}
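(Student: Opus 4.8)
The plan is to exploit the fact that ${\bf W}$ is a weighted shift, so that every operator in sight is either diagonal or can be evaluated explicitly on the orthonormal basis $\{e_\gamma\}_{\gamma\in\FF_n^+}$. First I would record the elementary formulas
$$
W_\beta e_\alpha = \sqrt{\tfrac{b_\alpha}{b_{\beta\alpha}}}\, e_{\beta\alpha}, \qquad
W_\beta^* e_\gamma = \begin{cases} \sqrt{\tfrac{b_\sigma}{b_\gamma}}\, e_\sigma & \text{if } \gamma = \beta\sigma,\\[2pt] 0 & \text{otherwise,}\end{cases}
$$
which follow from $W_i=S_iD_i$ by a one-line induction on $|\beta|$. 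These immediately give that $W_\alpha W_\alpha^*$ is diagonal, with $W_\alpha W_\alpha^* e_\gamma = \frac{b_\sigma}{b_\gamma}\, e_\gamma$ when $\gamma=\alpha\sigma$ (i.e.\ $\alpha$ is a prefix of $\gamma$) and $W_\alpha W_\alpha^* e_\gamma=0$ otherwise.

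Next I would compute the diagonal entries of $\Delta_{g^{-1}}({\bf W},{\bf W}^*)$ directly. Summing over $\alpha$ of all lengths, only the prefixes of a fixed $\gamma$ contribute, so the coefficient of $e_\gamma$ equals $\frac{1}{b_\gamma}\sum_{\gamma=\tau\sigma} a_\tau b_\sigma$, the sum ranging over all factorizations $\gamma=\tau\sigma$ in $\FF_n^+$. The key input is the convolution identity coming from $g^{-1}g=1$: comparing coefficients of $Z_\gamma$ in the product of the two power series yields $\sum_{\gamma=\tau\sigma} a_\tau b_\sigma=\delta_{\gamma,g_0}$. Hence the $e_\gamma$-entry is $\frac{1}{b_\gamma}\delta_{\gamma,g_0}$, which is $1$ for $\gamma=g_0$ and $0$ otherwise (recall $b_{g_0}=1$), so $\Delta_{g^{-1}}({\bf W},{\bf W}^*)=P_{\CC 1}$. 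For the convergence claim, since $W_\alpha W_\alpha^*$ kills $e_\gamma$ unless $\alpha$ is a prefix of $\gamma$, the partial sums $\sum_{p=0}^{k}\sum_{|\alpha|=p} a_\alpha W_\alpha W_\alpha^*$ stabilize on each basis vector once $k\geq|\gamma|$; combined with the uniform norm bound supplied by admissibility condition (c), a standard $3\varepsilon$ argument upgrades this pointwise convergence on the dense span of $\{e_\gamma\}$ to strong convergence on all of $F^2(H_n)$.

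For the second identity I would substitute the value $\Delta_{g^{-1}}({\bf W},{\bf W}^*)=P_{\CC 1}$ and compute $b_\alpha W_\alpha P_{\CC 1} W_\alpha^*$ on a basis vector. Since $P_{\CC 1}W_\alpha^* e_\gamma=\langle W_\alpha^* e_\gamma, e_{g_0}\rangle e_{g_0}$ is nonzero only when $\gamma=\alpha$, in which case it equals $\frac{1}{\sqrt{b_\alpha}}e_{g_0}$, and since $W_\alpha e_{g_0}=\frac{1}{\sqrt{b_\alpha}}e_\alpha$, I obtain $b_\alpha W_\alpha P_{\CC 1}W_\alpha^* e_\gamma=\delta_{\gamma,\alpha}\, e_\alpha$. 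Thus each summand $b_\alpha W_\alpha P_{\CC 1}W_\alpha^*$ is precisely the rank-one orthogonal projection onto $\CC e_\alpha$, and summing over $\alpha\in\FF_n^+$ gives $\sum_\alpha P_{\CC e_\alpha}=I$, the finite partial sums being orthogonal projections increasing to $I$ and hence strongly convergent. The only genuine bookkeeping hazard is to use the convolution identity on the correct side ($g^{-1}g$ rather than $gg^{-1}$) and to keep straight which word plays the role of prefix versus tail; once those are pinned down, the computation is purely formal.
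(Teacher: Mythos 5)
Your proof is correct. Note that this paper does not actually prove Proposition \ref{W}: it is recalled verbatim from \cite{Po-Noncommutative domains}, so there is no internal argument to compare against; your write-up supplies the missing computation, and it is the natural one. The three ingredients all check out: (1) the shift formulas $W_\beta e_\alpha=\sqrt{b_\alpha/b_{\beta\alpha}}\,e_{\beta\alpha}$ and their adjoints, which make each $W_\alpha W_\alpha^*$ diagonal with entry $b_\sigma/b_\gamma$ at $e_\gamma=e_{\alpha\sigma}$; (2) the convolution identity $\sum_{\gamma=\tau\sigma}a_\tau b_\sigma=\delta_{\gamma,g_0}$ from $g^{-1}g=1$, which collapses the diagonal entries of $\Delta_{g^{-1}}({\bf W},{\bf W}^*)$ to $\delta_{\gamma,g_0}/b_\gamma$ --- and you matched the prefix/tail roles correctly, the prefix variable carrying the coefficients of $g^{-1}$ exactly as needed (incidentally, since the inverse of a formal power series with constant term $1$ is two-sided, both convolution identities hold, so the ``hazard'' you flag is harmless, though your choice is the one that plugs in directly); and (3) the identification of $b_\alpha W_\alpha P_{\CC 1}W_\alpha^*$ with the rank-one projection $P_{\CC e_\alpha}$, whose sum over $\alpha\in\FF_n^+$ increases to $I$ strongly. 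Your appeal to admissibility condition (c) for the uniform bound on the partial sums, combined with stabilization on each basis vector, is exactly what is needed to upgrade convergence on the dense span of $\{e_\gamma\}$ to SOT convergence; without (c) this step would genuinely fail, so it is good that you invoked it explicitly.
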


Let  ${\bf W}^{(g)}:=(W_1^{(g)},\ldots, W_n^{(g)})$ be the universal model of the noncommutative domain 
$\cD_{g^{-1}}$. We denote by $\cA(g)$ the closed non-selfadjoint  algebra generated by $W_1^{(g)},\ldots, W_n^{(g)} $ and the identity.  The $C^*$-algebra  generated by $W_1^{(g)},\ldots, W_n^{(g)} $ and the identity will be denoted by $C^*({\bf W}^{(g)})$. Given two domains  $\cD_{g^{-1}}$ and $\cD_{f^{-1}}$, if   
$\cD^{pure}_{f^{-1}}(\cH)\subset \cD^{pure}_{g^{-1}}(\cH)$ for any Hilbert space 
$\cH$, we write $\cD^{pure}_{f^{-1}}\subset \cD^{pure}_{g^{-1}}$.

\begin{theorem}  \label{inclusion} Let $f$ and $g$ be two admissible free holomorphic functions and let 
$${\bf W}^{(f)}:=(W_1^{(f)},\ldots, W_n^{(f)})\quad  \text{ and } \quad  {\bf W}^{(g)}:=(W_1^{(g)},\ldots, W_n^{(g)})
$$
 be the universal models associated with the domains $\cD_{f^{-1}}$ and $\cD_{g^{-1}}$, respectively. Then the following statements are equivalent.
\begin{enumerate}
\item[(i)] $\cD^{pure}_{f^{-1}}\subset \cD^{pure}_{g^{-1}}$.

\item[(ii)] ${\bf W}^{(f)}$ is a pure $n$-tuple in $ \cD_{g^{-1}}(F^2(H_n))$.

\item[(iii)] There is a Hilbert space $\cG$ and an isometry $V:F^2(H_n)\to F^2(H_n)\otimes \cG$ such that 
$$
VW_i^{(f)*}=(W_i^{(g) *}\otimes I_\cG)V,\qquad i\in \{1,\ldots, n\}.
$$
\end{enumerate}
If, in addition,  $ \cD_{g^{-1}}$ is a regular domain, then the conditions above are equivalent to the following.
\begin{enumerate}
\item[(iv)]  The linear map $\Psi: \cA(g)\to \cA(f)$ defined by 
$
\Psi(W_\alpha^{(g)}):=W_\alpha^{(f)}$,  $\alpha\in \FF_n^+,
$ 
is completely contractive and has  a coextension to a $*$-representation  $\pi:C^*({\bf W}^{(g)})\to B(\cK)$ which is pure, i.e.
$$
\Psi(W_\alpha^{(g)})^*=\pi(W_\alpha^{(g)})^*|_{F^2(H_n)}, \qquad \alpha\in \FF_n^+,
$$
and $\pi$ is unitarily equivalent to a multiple of the identity representation of $C^*({\bf W}^{(g)})$.
\end{enumerate}
\end{theorem}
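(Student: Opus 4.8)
The plan is to treat the three unconditional equivalences (i)--(iii) through the noncommutative Berezin kernel and the model Theorem \ref{model}, and then, under regularity, to obtain (iv) by producing an explicit multiple of the identity representation. The implication (i) $\Rightarrow$ (ii) is immediate: by Proposition \ref{W} (applied to $f$) the universal model ${\bf W}^{(f)}$ is itself pure in $\cD_{f^{-1}}(F^2(H_n))$, hence lies in $\cD^{pure}_{f^{-1}}(F^2(H_n))\subset \cD^{pure}_{g^{-1}}(F^2(H_n))$. I would next record (ii) $\Leftrightarrow$ (iii) as a direct instance of Theorem \ref{model} with $T={\bf W}^{(f)}$, $\cH=F^2(H_n)$ and the domain $\cD_{g^{-1}}$: purity of ${\bf W}^{(f)}$ in $\cD_{g^{-1}}(F^2(H_n))$ is equivalent to the coinvariant realization $W_i^{(f)*}=(W_i^{(g)*}\otimes I_\cG)|_{F^2(H_n)}$, that is, to an isometry $V$ with $VW_i^{(f)*}=(W_i^{(g)*}\otimes I_\cG)V$, the kernel $K_{g,{\bf W}^{(f)}}$ serving as $V$.

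For (ii) $\Rightarrow$ (i) I would take an arbitrary $T\in \cD^{pure}_{f^{-1}}(\cH)$ and apply Theorem \ref{model} to $f$, realizing $\cH$ as a coinvariant subspace for $W_i^{(f)}\otimes I_\cD$ via the isometric Berezin kernel $K_{f,T}$. Tensoring the intertwining isometry $V$ from (iii) with $I_\cD$ and composing with $K_{f,T}$ produces an isometry $J:=(V\otimes I_\cD)K_{f,T}$ satisfying $JT_i^*=(W_i^{(g)*}\otimes I_{\cG\otimes\cD})J$. The converse half of Theorem \ref{model}, now applied to $g$, forces $T$ to be pure in $\cD_{g^{-1}}(\cH)$; since $T$ is arbitrary this gives (i). This composition-of-intertwiners step is the technical core of (i)--(iii), but it is routine once the model theorem is available and uses no regularity hypothesis.

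Assuming now that $\cD_{g^{-1}}$ is regular, I would establish (iii) $\Leftrightarrow$ (iv). For (iii) $\Rightarrow$ (iv), set $\pi:=\mathrm{id}\otimes I_\cG$ on $\cK:=F^2(H_n)\otimes \cG$, a multiple of the identity representation of $C^*({\bf W}^{(g)})$; by Proposition \ref{W} one has $\Delta_{g^{-1}}(\pi({\bf W}))=P_{\CC 1}\otimes I_\cG$ and $\sum_{\alpha} b_\alpha \pi(W_\alpha)\Delta_{g^{-1}}(\pi({\bf W}))\pi(W_\alpha)^*=I$, so $\pi$ is pure. The isometry $V$ identifies $F^2(H_n)$ with a subspace invariant under each $\pi(W_i^{(g)})^*=W_i^{(g)*}\otimes I_\cG$, hence coinvariant for $\pi(\cA(g))$; since compression to a coinvariant subspace is a unital completely contractive homomorphism that agrees with $\Psi$ on the generators $W_\alpha^{(g)}$, this yields both the complete contractivity of $\Psi$ and the coextension identity $\Psi(W_\alpha^{(g)})^*=\pi(W_\alpha^{(g)})^*|_{F^2(H_n)}$. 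For (iv) $\Rightarrow$ (iii), the coextension identity combined with the hypothesis that $\pi$ is unitarily equivalent to $\mathrm{id}\otimes I_\cG$ reads off precisely an isometry $V$ intertwining $W_i^{(f)*}$ with $W_i^{(g)*}\otimes I_\cG$.

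The hard part, and the one place where regularity is genuinely used, is to know that a \emph{pure} $*$-representation of $C^*({\bf W}^{(g)})$ must be unitarily equivalent to a multiple of the identity (Fock) representation, so that (iv) is a faithful reformulation of (iii) rather than a strictly stronger statement. For regular domains this rests on the structure of $C^*({\bf W}^{(g)})$: it contains the ideal of compact operators and its representations admit a Wold-type decomposition into a Cuntz part and a multiple of the identity representation, the pure ones being exactly the latter. Establishing this dichotomy, or invoking it from \cite{Po-Noncommutative domains}, is where the regularity hypothesis $a_\alpha\le 0$ with $a_{g_i}<0$ enters decisively.
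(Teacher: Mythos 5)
Your handling of (i)--(iii) is correct and, in one respect, more complete than the paper's. The implications (i)$\implies$(ii) and (ii)$\implies$(iii) are exactly the paper's (Proposition \ref{W} plus Theorem \ref{model} with $V=K_{g,{\bf W}^{(f)}}$). But the paper's paragraph labelled ``(iii)$\implies$(i)'' only verifies, via Proposition \ref{W} and the isometry $V$, that ${\bf W}^{(f)}$ is pure in $\cD_{g^{-1}}(F^2(H_n))$ --- that is, it re-derives item (ii); your composition $J:=(V\otimes I_\cD)K_{f,T}$, followed by the ``model $\Rightarrow$ pure'' direction of Theorem \ref{model} applied to $g$, is what actually passes from the single tuple ${\bf W}^{(f)}$ to an arbitrary $T\in\cD^{pure}_{f^{-1}}(\cH)$ and yields the inclusion of domains. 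On (iv) your route also genuinely differs: the paper obtains complete contractivity of $\Psi$ by citing Corollary 1.14 of \cite{Po-Noncommutative domains}, and closes the loop by proving (iv)$\implies$(ii) through the estimate $\bigl\|\sum_{1\leq|\alpha|\leq m}c_\alpha W^{(f)}_\alpha W^{(f)*}_\alpha\bigr\|\leq 1$ (writing $g^{-1}=1-\sum_{|\alpha|\geq 1}c_\alpha Z_\alpha$), which is precisely where regularity ($c_\alpha\geq 0$, $c_{g_i}>0$) is used. You instead prove (iii)$\Leftrightarrow$(iv) directly: forward by the compression homomorphism $X\mapsto V^*(X\otimes I_\cG)V$ onto the coinvariant subspace $V(F^2(H_n))$, which simultaneously gives well-definedness, complete contractivity, multiplicativity, and the coextension identity; backward by reading off $V:=U|_{F^2(H_n)}$ from the unitary $U$ implementing $\pi\simeq \mathrm{id}\otimes I_\cG$ and the coextension identity. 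Both implications are correct, more self-contained than the paper's, and never invoke regularity, so your argument is formally more general.

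Your final paragraph, however, rests on a misconception and should simply be deleted. No dichotomy theorem of the form ``every pure $*$-representation of $C^*({\bf W}^{(g)})$ is unitarily equivalent to a multiple of the identity representation'' is needed: statement (iv) \emph{postulates} that unitary equivalence as part of the condition, and in the direction (iii)$\implies$(iv) your own construction exhibits $\pi$ literally as $X\mapsto X\otimes I_\cG$, so (iv) is never in danger of being strictly stronger than (iii) --- your first paragraphs already close the equivalence. Moreover, the claim that this is ``where regularity enters decisively'' mislocates the paper's actual use of the hypothesis (Corollary 1.14 of \cite{Po-Noncommutative domains}, and the positivity of the coefficients of $1-g^{-1}$ in its (iv)$\implies$(ii) estimate). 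Since the superfluous step is only deferred to the literature and nothing in your argument depends on it, this is a blemish rather than a gap; the proof stands without it.
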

\begin{proof}  According to Proposition \ref{W},   ${\bf W}^{(f)}$ is a pure $n$-tuple in $ \cD_{f^{-1}}(F^2(H_n))$. Consequently, the implication (i)$\implies$(ii) is clear.
To prove that (ii)$\implies$(iii), suppose  that   ${\bf W}^{(f)}$ is a pure $n$-tuple in 
$ \cD_{g^{-1}}(F^2(H_n))$. Due to Theorem \ref{model},
 we have 
$$
 K_{g,{\bf W}^{(f)}} {W_i^{(f)*}}=({W_i^{(g)*}}\otimes I_\cD)K_{g,{\bf W}^{(f)}},\qquad i\in \{1,\ldots, n\}
 $$
 and the noncommutative Berezin kernel $K_{g,{\bf W}^{(f)}}$ is an isometry. Taking $V:=K_{g,{\bf W}^{(f)}}$, we conclude that item (iii) holds.
 Now, we prove that (iii)$\implies$(i).  To this end, suppose that (iii) holds. Assume that $g=1+\sum_{|\alpha|\geq 1} b_\alpha Z_\alpha$
 and $g^{-1}=1+\sum_{|\alpha|\geq 1} a_\alpha Z_\alpha$.  Then, due to Proposition \ref{W},
 $$
\Delta_{g^{-1}}({\bf W}^{(g)},{\bf W}^{(g)*}):=\sum_{p=0}^\infty\sum_{ |\alpha|= p} a_\alpha  W_\alpha^{(g)} W_\alpha^{(g)*}
$$
converges in the  strong operator topology and  $\Delta_{g^{-1}}({\bf W}^{(g)},{\bf W}^{(g)*})=P_{\CC1}$, the orthogonal projection of $F^2(H_n)$ onto the constants $\CC1$.
Consequently,
$$
\Delta_{g^{-1}}({\bf W}^{(f)},{\bf W}^{(f)*}):=\sum_{p=0}^\infty\sum_{ |\alpha|= p} a_\alpha  W_\alpha^{(f)} W_\alpha^{(f)*}=V^*\left(\sum_{p=0}^\infty\sum_{ |\alpha|= p} a_\alpha  W_\alpha^{(g)} W_\alpha^{(g)*}\right)
V\geq 0,
$$
where  the series converge in the  strong operator topology. On the other hand, using again Proposition \ref{W}, we have
$$
\sum_{\alpha \in \FF_n^+} b_\alpha W^{(f)}_\alpha\Delta_{g^{-1}}({\bf W}^{(f)},{\bf W}^{(f)*}) W^{(f)*}_\alpha
=
V^*\left(\sum_{\alpha \in \FF_n^+} b_\alpha W_\alpha^{(g)}\Delta_{g^{-1}}({\bf W}^{(g)},{\bf W}^{(g)*}) W_\alpha^{(g)*}\right)V= V^*V=I,
$$
where the convergence is in the strong operator topology. Hence, ${\bf W}^{(f)}\in \cD_{g^{-1}}(F^2(H_n))$ is a pure $n$-tuple.
 
 Now,  assume that   $\cD_{g^{-1}}$ is a regular domain.  To  prove the implication  (iii)$\implies$(iv), suppose that (iii) holds.
 Since ${\bf W}^{(f)}$ is a pure $n$-tuple in $ \cD_{g^{-1}}(F^2(H_n))$, 
 Corollary 1.14 from  \cite{Po-Noncommutative domains}  shows that  the linear map $\Psi: \cA(g)\to \cA(f)$ defined by 
$
\Psi(W_\alpha^{(g)}):=W_\alpha^{(f)}$, $ \alpha\in \FF_n^+,
$ 
is  a completely contractive homomorphism. 
 Consider the $*$-representation  $\pi: C^*({\bf W}^{(g)})\to B(F^2(H_n)\otimes \cG)$ defined by $\pi(A):=A\otimes I_\cG$ for  any $A\in C^*({\bf W}^{(g)})$.
Since  $
VW_i^{(f)*}=(W_i^{(g) *}\otimes I_\cG)V$ for $i\in \{1,\ldots, n\}$  and 
$V:F^2(H_n)\to F^2(H_n)\otimes \cG$  is an isometry, we can identify $F^2(H_n)$ with $V(F^2(H_n))$. Consequently,  $F^2(H_n)$ is invariant under each  operator $W_i^{(g) *}\otimes I_\cG$ and 
$
W_\alpha^{(f)*}= \pi(W_\alpha^{(g)})^*|_{F^2(H_n)},  \alpha\in \FF_n^+.
$
To complete the proof, we show  that (iv)$\implies$(ii). To this end, suppose that item (iv) holds.
Therefore, there is a $*$-representation  $\pi:C^*({\bf W}^{(g)})\to B(\cK)$ which is pure, i.e.
$
W_\alpha^{(f)*}=\pi(W_\alpha^{(g)})^*|_{F^2(H_n)}, \alpha\in \FF_n^+,
$
and $\pi$ is unitarily equivalent to a multiple of the identity representation of $C^*({\bf W}^{(g)})$. 
 Since $ \cD_{g^{-1}}$ is a regular domain, we have  and $g^{-1}=1-\sum_{|\alpha|\geq 1} c_\alpha Z_\alpha$, where $c_\alpha\geq 0$  and $c_{g_i}>0$ for $i\in \{1,\ldots, n\}$.
 Note that
 \begin{equation*}
 \begin{split}
 \left\| \sum_{1\leq|\alpha|\leq m} c_\alpha W_\alpha^{(f)} W_\alpha^{(f)*}\right\|
 &\leq \left\| P_{F^2(H_n)}\pi\left( \sum_{1\leq|\alpha|\leq m} c_\alpha W_\alpha^{(g)} W_\alpha^{(g)*}\right)|_{F^2(H_n)}\right\| \\
 &\leq \left\| \sum_{1\leq|\alpha|\leq m} c_\alpha W_\alpha^{(g)} W_\alpha^{(g)*}\right\|\leq 1
 \end{split}
 \end{equation*}
 for any $m\in \NN$.
 Consequently,  $ \sum_{|\alpha|\geq 1} c_\alpha W_\alpha^{(f)} W_\alpha^{(f)*}\leq I$, which shows that
 ${\bf W}^{(f)}\in \cD_{g^{-1}}(F^2(H_n))$.
 On the other hand, we know that $\pi(W_\alpha^{(g)})=U^*(W_\alpha^{(g)}\otimes I_\cG)U$, where 
 $U:\cK\to  F^2(H_n)\otimes  \cG$ is a  unitary operator, and 
 $$
 \sum_{\alpha \in \FF_n^+} b_\alpha W^{(g)}_\alpha\Delta_{g^{-1}}({\bf W}^{(g)},{\bf W}^{(g)*}) W^{(g)*}_\alpha=I_{F^2(H_n)}
 $$
where the convergence is in the strong topology.  Hence, we deduce that
$$
 \sum_{\alpha \in \FF_n^+} b_\alpha \pi(W^{(g)}_\alpha)\Delta_{g^{-1}}(\pi({\bf W}^{(g)}),\pi({\bf W}^{(g)*}) )\pi(W^{(g)*}_\alpha)=I_\cK
 $$
 Taking the compression  to  $F^2(H_n)$ and  using the fact that
   $W_\alpha^{(f)*}=\pi(W_\alpha^{(g)})^*|_{F^2(H_n)}$ for $\alpha\in \FF_n^+,$ we obtain
 $$
 \sum_{\alpha \in \FF_n^+} b_\alpha W^{(f)}_\alpha\Delta_{g^{-1}}({\bf W}^{(f)},{\bf W}^{(f)*}) W^{(f)*}_\alpha=I_{F^2(H_n)},
 $$  
 which shows that  ${\bf W}^{(f)}$ is a pure $n$-tuple in $\cD_{g^{-1}}(F^2(H_n))$. Therefore item (ii) holds.
 The proof is complete.
\end{proof}

The following theorem  extends the corresponding result  from \cite{Po-invariant} to our more general setting.   

\begin{theorem} \label{Beurling}  Let $f$ and $g$ be two admissible free holomorphic functions and let 
$${\bf W}^{(f)}:=(W_1^{(f)},\ldots, W_n^{(f)})\quad  \text{ and } \quad  {\bf W}^{(g)}:=(W_1^{(g)},\ldots, W_n^{(g)})
$$
 be the universal models associated with $f$ and $g$, respectively. If $\cD_{g^{-1}}$ is a regular domain such that
 $\cD^{pure}_{f^{-1}}\subset \cD^{pure}_{g^{-1}}$
and $Y\in B(F^2(H_n)\otimes \cK)$ is a  self-adjoint operator,    then the following statements are equivalent.
\begin{enumerate}
\item[(i)] There is a Hilbert space $\cD$ and an operator  $\Psi:F^2(H_n)\otimes \cD\to  F^2(H_n)\otimes \cK$ such that
 $Y=\Psi\Psi^*$ and 
 $$\Psi(W_i^{(g)}\otimes I_\cD)=(W_i^{(f)}\otimes I_\cK)\Psi, \qquad i\in \{1,\ldots,n\}.
 $$

\item[(ii)] $Y- \sum_{|\alpha|\geq 1} c_\alpha (W_\alpha^{(f)}\otimes I_\cK) Y (W_\alpha^{(f)*}\otimes I_\cK)\geq 0$, where $g^{-1}=1-\sum_{|\alpha|\geq 1} c_\alpha Z_\alpha$.

\end{enumerate}

\end{theorem}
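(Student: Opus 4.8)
The plan is to prove the two implications separately, treating $(\mathrm{i})\Rightarrow(\mathrm{ii})$ as a short computation and reserving the real work for the converse, where a multi-analytic operator must be manufactured out of the positivity condition. Throughout I write $\Delta^{(f)}:=\Delta_{g^{-1}}({\bf W}^{(f)},{\bf W}^{(f)*})$ and denote by $\Phi_c(X):=\sum_{|\alpha|\ge 1}c_\alpha (W_\alpha^{(f)}\otimes I_\cK)X(W_\alpha^{(f)*}\otimes I_\cK)$ the completely positive map attached to $g^{-1}=1-\sum_{|\alpha|\ge1}c_\alpha Z_\alpha$, so that condition $(\mathrm{ii})$ reads $Q:=Y-\Phi_c(Y)\ge 0$. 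Since $\cD^{pure}_{f^{-1}}\subset\cD^{pure}_{g^{-1}}$ and ${\bf W}^{(f)}$ is pure in $\cD_{f^{-1}}$ by Proposition \ref{W}, Theorem \ref{inclusion} guarantees that ${\bf W}^{(f)}$ is a pure $n$-tuple in $\cD_{g^{-1}}(F^2(H_n))$; this purity is the engine of the whole argument and I will use it repeatedly. For $(\mathrm{i})\Rightarrow(\mathrm{ii})$ I would substitute $Y=\Psi\Psi^*$ into the defect, push each $W_\alpha^{(f)}\otimes I_\cK$ through $\Psi$ by the intertwining relation, and collapse the result to $\Psi\,[\,\Delta_{g^{-1}}({\bf W}^{(g)},{\bf W}^{(g)*})\otimes I_\cD\,]\,\Psi^*$; by Proposition \ref{W} the inner factor is $P_{\CC 1}\otimes I_\cD\ge 0$, so the defect equals $\Psi(P_{\CC 1}\otimes I_\cD)\Psi^*\ge 0$.

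For the converse I would first record the two scalar identities forced by $g\,g^{-1}=g^{-1}g=1$: for every $\delta\in\FF_n^+$ with $|\delta|\ge1$ one has $b_\delta=\sum_{\delta=\gamma\beta,\,|\beta|\ge1}b_\gamma c_\beta$ (from $g\,g^{-1}=1$), and $b_\delta=\sum_{\delta=\beta_1\cdots\beta_m,\,|\beta_i|\ge1}c_{\beta_1}\cdots c_{\beta_m}$ (the Neumann expansion of $g=(g^{-1})^{-1}$). Summed against $\Delta^{(f)}$, the Neumann identity shows $\sum_{m\ge0}\Phi_c^m(\Delta^{(f)})=\sum_{\gamma}b_\gamma (W_\gamma^{(f)}\otimes I_\cK)\Delta^{(f)}(W_\gamma^{(f)*}\otimes I_\cK)$, which purity of ${\bf W}^{(f)}$ equates to $I$; since $\Delta^{(f)}=I-\Phi_c(I)$, telescoping then gives $\Phi_c^{M}(I)=I-\sum_{m=0}^{M-1}\Phi_c^m(\Delta^{(f)})\to 0$ in the strong operator topology. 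Consequently $\Phi_c^{M}(Y)\to 0$, because $-\|Y\|\,\Phi_c^M(I)\le\Phi_c^M(Y)\le\|Y\|\,\Phi_c^M(I)$, and the expansion $Y=\sum_{m=0}^{M}\Phi_c^m(Q)+\Phi_c^{M+1}(Y)$ yields $Y=\sum_{m\ge0}\Phi_c^m(Q)\ge 0$. This disposes of the positivity of $Y$, which is not assumed in $(\mathrm{ii})$.

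With $Y\ge 0$ in hand, I would use the single-step identity to evaluate the reconstruction $\sum_{\gamma}b_\gamma (W_\gamma^{(f)}\otimes I_\cK)Q(W_\gamma^{(f)*}\otimes I_\cK)$. A reindexing of the partial sums over $|\gamma|\le N$ makes every term with $1\le|\delta|\le N$ cancel and leaves a remainder $R_N=\sum_{|\delta|>N}\kappa_{N,\delta}(W_\delta^{(f)}\otimes I_\cK)Y(W_\delta^{(f)*}\otimes I_\cK)$ with coefficients $\kappa_{N,\delta}\ge0$; the estimate $0\le R_N\le\|Y\|\,R_N^{(I)}$, where $R_N^{(I)}$ is the corresponding remainder for $Y=I$, forces $R_N\to0$ by the purity already used, so $\sum_{\gamma}b_\gamma(W_\gamma^{(f)}\otimes I_\cK)Q(W_\gamma^{(f)*}\otimes I_\cK)=Y$. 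Factoring $Q=Q^{1/2}Q^{1/2}$ and setting $\cD:=F^2(H_n)\otimes\cK$, I would define $\Psi:F^2(H_n)\otimes\cD\to F^2(H_n)\otimes\cK$ on the orthonormal basis by $\Psi(e_\gamma\otimes\xi):=\sqrt{b_\gamma}\,(W_\gamma^{(f)}\otimes I_\cK)Q^{1/2}\xi$. Viewing $\Psi$ as the row operator with entries $\sqrt{b_\gamma}(W_\gamma^{(f)}\otimes I_\cK)Q^{1/2}$, its boundedness is exactly the convergence just proved, since $\Psi\Psi^*=\sum_\gamma b_\gamma(W_\gamma^{(f)}\otimes I_\cK)Q(W_\gamma^{(f)*}\otimes I_\cK)=Y$; the intertwining $\Psi(W_i^{(g)}\otimes I_\cD)=(W_i^{(f)}\otimes I_\cK)\Psi$ then follows at once from $W_i^{(g)}e_\gamma=\sqrt{b_\gamma/b_{g_i\gamma}}\,e_{g_i\gamma}$ and $W_i^{(f)}W_\gamma^{(f)}=W_{g_i\gamma}^{(f)}$.

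The main obstacle is the convergence bookkeeping in the converse: everything hinges on converting the abstract statement ``${\bf W}^{(f)}$ is pure in $\cD_{g^{-1}}$'' into the two concrete limits $\Phi_c^M(I)\to0$ and $R_N\to0$, and on matching the combinatorics of prefix decompositions of words $\delta$ against the algebraic relation $g\,g^{-1}=1$. Once the coefficient identities and the purity-driven decay are secured, the definition of $\Psi$ and the verifications of $\Psi\Psi^*=Y$ and of the intertwining relation are essentially formal. I would also need to confirm that the rearrangements of the defining series are legitimate in the strong and weak operator topologies, which reduces once more to the positivity of $Q$ and $Y$ and to the monotonicity of the partial sums.
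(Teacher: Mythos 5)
Your proposal is correct, and while the easy direction and the positivity step coincide with the paper's argument, your construction of $\Psi$ in the converse takes a genuinely different route. The paper, after establishing $Y\geq 0$ exactly as you do (via the squeeze $-\|Y\|\phi^k(I)\leq \phi^k(Y)\leq \|Y\|\phi^k(I)$ and SOT-$\lim\phi^k(I)=0$), defines operators $E_i(Y^{1/2}y):=Y^{1/2}(W_i^{(f)*}\otimes I_\cK)y$ on $\cE:=\overline{Y^{1/2}(F^2(H_n)\otimes \cK)}$, checks that $F=(E_1^*,\ldots,E_n^*)$ is a \emph{pure} $n$-tuple in $\cD_{g^{-1}}(\cE)$, and then invokes Theorem \ref{model} to obtain the isometric Berezin kernel $K_{g,F}$, setting $\Psi:=Y^{1/2}K_{g,F}^*$; the intertwining and $\Psi\Psi^*=Y$ then follow from $Y^{1/2}F_i=(W_i^{(f)}\otimes I_\cK)Y^{1/2}$ and $K_{g,F}^*K_{g,F}=I$. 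You instead bypass the auxiliary tuple and the model theorem entirely: you extract the coefficient identities from $g\,g^{-1}=1$ and the Neumann expansion of $g$, prove the reconstruction formula $\sum_\gamma b_\gamma (W_\gamma^{(f)}\otimes I_\cK)Q(W_\gamma^{(f)*}\otimes I_\cK)=Y$ for $Q:=Y-\Phi_c(Y)$, and read off $\Psi$ as the row operator with entries $\sqrt{b_\gamma}(W_\gamma^{(f)}\otimes I_\cK)Q^{1/2}$ on $\cD:=F^2(H_n)\otimes\cK$. What the paper's route buys is brevity and reuse of machinery (the reconstruction identity you prove is, in disguise, exactly the statement that the Berezin kernel of the pure tuple $F$ is an isometry), plus a coefficient space $\cD$ of minimal size; what your route buys is self-containedness — in particular you actually \emph{prove} the implication ``purity in the regular domain $\cD_{g^{-1}}$ $\Rightarrow$ SOT-$\lim\Phi_c^k(I)=0$,'' which the paper asserts without argument — together with a completely explicit formula for $\Psi$. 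The price is the rearrangement bookkeeping you flag at the end; it is genuinely needed but unproblematic, since every series involved has positive terms once $Y\geq 0$ is in hand, so unordered WOT-sums may be regrouped freely, and your remainder estimate $0\leq R_N\leq \|Y\|R_N^{(I)}$ together with uniform boundedness upgrades the resulting weak convergence to strong convergence.
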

\begin{proof} Suppose that  item (i) holds.  Since $(W_1^{(g)},\ldots, W_n^{(g)})\in \cD_{g^{-1}}(F^2(H_n))$ and $\cD_{g^{-1}}$ is a regular domain, we have $c_\alpha\geq 0$ and 
\begin{equation*}
\begin{split}
\sum_{|\alpha|\geq 1} c_\alpha (W_\alpha^{(f)}\otimes I_\cK) Y (W_\alpha^{(f)*}\otimes I_\cK)
&=\sum_{|\alpha|\geq 1} c_\alpha (W_\alpha^{(f)}\otimes I_\cK) \Psi\Psi^* (W_\alpha^{(f)*}\otimes I_\cK)\\
&=\Psi\left( \sum_{|\alpha|\geq 1} c_\alpha (W_\alpha^{(g)} W_\alpha^{(g)*}\otimes I_\cD)\right)
 \Psi^*\leq  \Psi\Psi^*=Y,
 \end{split}
\end{equation*}
which proves the implication (i)$\implies$(ii). Conversely, assume that inequality (ii) holds.
  Define the completely  positive linear map $\phi_{{\bf W}^{(f)}}:B(F^2(H_n)\otimes \cK)\to B(F^2(H_n)\otimes \cK)$ by setting 
$\phi_{{\bf W}^{(f)}}(T):=\sum_{|\alpha|\geq 1} c_\alpha (W_\alpha^{(f)}\otimes I_\cK) T (W_\alpha^{(f)*}\otimes I_\cK)$.  According to Proposition   \ref{W},  
 ${\bf W}^{(f)}:=(W_1^{(f)},\ldots, W_n^{(f)})$  is a pure $n$-tuple in $ \cD_{f^{-1}}(F^2(H_n))$.  Consequently, since $\cD^{pure}_{f^{-1}}\subset \cD^{pure}_{g^{-1}}$, we  have
SOT-$\lim_{k\to \infty} \phi_{{\bf W}^{(f)}}^k(I)=0$. Taking into account condition (ii), we have  $\phi_{{\bf W}^{(f)}}(Y)\leq Y$, which implies
$$
-\|Y\|\phi_{{\bf W}^{(f)}}^k(I)\leq \phi_{{\bf W}^{(f)}}^k(Y)\leq \|Y\| \phi_{{\bf W}^{(f)}}^k(I).
$$
Hence, we deduce that SOT-$\lim_{k\to \infty} \phi_{{\bf W}^{(f)}}^k(Y)=0$ which together with inequality   $\phi_{{\bf W}^{(f)}}^k(Y)\leq Y$ implies $Y\geq 0$.
For each $i\in \{1,\ldots, n\}$,  define the operator  $E_i:Y^{1/2}(F^2(H_n)\otimes \cK)\to Y^{1/2}(F^2(H_n)\otimes \cK)$ by setting 
\begin{equation} \label{Ei}
E_i(Y^{1/2}y):=Y^{1/2}(W_i^{(f)*}\otimes I_\cK) y,\qquad y\in F^2(H_n)\otimes \cK.
\end{equation}
Since  $\cD_{g^{-1}}$ is a regular domain, we have $c_{g_i}>0$ and, consequently,  
\begin{equation*}
\begin{split}
c_{g_i}\|E_iY^{1/2}y\|^2&=\|\sqrt{c_{g_i}} Y^{1/2} (W_i^{(f)*}\otimes I_\cK)y\|^2=c_{g_i}
 \left<(W_i^{(f)}\otimes I_\cK)Y(W_i^{(f)*}\otimes I_\cK)y,y\right>\\
&\leq  \left<\phi_{{\bf W}^{(f)}}(Y)y,y\right>\leq \left<Yy,y\right>=\|Y^{1/2}y\|^2
\end{split}
\end{equation*}
for any $y\in F^2(H_n)\otimes \cK$. One can see now that the operator $E_i$ is well-defined and has a unique extension,
also denoted by $E_i$, to a bounded operator on $\cE:=\overline{Y^{1/2}(F^2(H_n)\otimes \cK)}$.
Set $F_i:=E_i^*\in B(\cE)$ and note that
\begin{equation*}
\begin{split}
\sum_{1\leq |\alpha|\leq m} c_\alpha \|F_\alpha^* Y^{1/2}y\|^2
&= \sum_{1\leq |\alpha|\leq m}\left\| \sqrt{c_\alpha} Y^{1/2} {W}_\alpha^{(f)*}y\right\|^2\\
&\leq  \left<\phi_X(Y)y,y\right>\leq \|Y^{1/2}y\|^2
\end{split}
\end{equation*}
for any $y\in \cK$ and $m\geq 1$. Consequently, $\sum_{|\alpha|\geq 1}c_\alpha F_\alpha F_\alpha^*\leq I$, which shows that
$F=(F_1,\ldots, F_n)\in \cD_{g^{-1}}(\cE)$. Since SOT-$\lim_{k\to \infty} \phi_{{\bf W}^{(f)}}^k(I)=0$ and 
$$
\left<\phi_F^k(I)Y^{1/2}y, Y^{1/2} y\right>=\left<\phi_{{\bf W}^{(f)}}^k(Y)y, y\right>
\leq \|Y\|\left<\phi_{{\bf W}^{(f)}}^k(I)y, y\right>
$$
for any $k\in \NN$ and $y\in F^2(H_n)\otimes \cK$ and  $\|\phi_F^k(I)\|\leq 1$ for any $k\in \NN$, we conclude that 
SOT-$\lim_{k\to \infty} \phi_F^k(I)=0$, which shows that  $F\in \cD^{pure}_{g^{-1}}(\cE)$.
According to Theorem \ref{model}, we have
\begin{equation}
\label{KF}
 K_{g,F} F_i^*=(W^{(g)*}_i\otimes I_\cD)K_{g,F},\qquad i\in \{1,\ldots, n\},
 \end{equation}
 where $\cD:=\overline{\text{\rm range}\,\Delta_{g^{-1}}(F,F^*)}$, and the noncommutative Berezin kernel $K_{g,F}$ is an isometry. Now, we define the operator  $\Psi:F^2(H_n)\otimes \cD\to F^2(H_n)\otimes \cK$ by setting
 $\Psi:=Y^{1/2} K_{g,F}^*$. Due to relation \eqref{Ei}, we deduce that $Y^{1/2} F_i=(W_i^{(f)}\otimes I_\cK) Y^{1/2}$. Combining the later relation with  \eqref {KF}, we obtain
 \begin{equation*}
 \begin{split}
 \Psi(W_i^{(g)}\otimes I_\cD)&=Y^{1/2} K_{g,F}^*(W_i^{(g)}\otimes I_\cD) =Y^{1/2} F_i K_{g,F}^*\\
 &=(W_i^{(f)}\otimes I_\cK)Y^{1/2} K_{g,F}^*=(W_i^{(f)}\otimes I_\cK)\Psi
 \end{split}
 \end{equation*}
for any $i\in\{1,\ldots, n\}$. Since $K_{g,F}$ is an isometry, we deduce that
$\Psi\Psi^*=Y^{1/2} K_{g,F}^*  K_{g,F}Y^{1/2}=Y$. This completes the proof.
\end{proof}

\begin{theorem} \label{inv-subspace}
Let $f$ and $g$ be two admissible free holomorphic functions and let 
$${\bf W}^{(f)}:=(W_1^{(f)},\ldots, W_n^{(f)})\quad  \text{ and } \quad  {\bf W}^{(g)}:=(W_1^{(g)},\ldots, W_n^{(g)})
$$
 be the universal models associated with $f$ and $g$, respectively. If $\cD_{g^{-1}}$ is a regular domain and 
 $$\cD^{pure}_{f^{-1}}\subset \cD^{pure}_{g^{-1}},$$
    then the following statements are equivalent.
\begin{enumerate}
\item[(i)] $\cM\subset F^2(H_n)\otimes \cK$ is a joint invariant subspace under  $W_1^{(f)}\otimes I_\cK,\ldots, W_n^{(f)}\otimes I_\cK$.
\item[(ii)] There is a Hilbert space $\cD$ and a partial isometry $\Psi:F^2(H_n)\otimes \cD\to F^2(H_n)\otimes \cK$ such that
$$
\cM=\Psi(F^2(H_n)\otimes \cD)
$$
and 
$$
\Psi (W_i^{(g)}\otimes I_\cD)=(W_i^{(f)}\otimes I_\cK) \Psi,\qquad i\in \{1,\ldots, n\}.
$$
\end{enumerate}
\end{theorem}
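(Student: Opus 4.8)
The plan is to deduce this Beurling--Lax--Halmos type result from Theorem~\ref{Beurling} by taking the self-adjoint operator $Y$ there to be the orthogonal projection $P_\cM$ onto $\cM$. Throughout I would write $A_i:=W_i^{(f)}\otimes I_\cK$ and $A_\alpha:=W_\alpha^{(f)}\otimes I_\cK$, and recall that since $\cD_{g^{-1}}$ is regular one has $g^{-1}=1-\sum_{|\alpha|\geq 1}c_\alpha Z_\alpha$ with $c_\alpha\geq 0$. The implication (ii)$\implies$(i) is immediate: if $\Psi$ is a partial isometry with $\cM=\Psi(F^2(H_n)\otimes\cD)$ satisfying $\Psi(W_i^{(g)}\otimes I_\cD)=A_i\Psi$, then for any $m=\Psi\xi\in\cM$ we get $A_i m=\Psi(W_i^{(g)}\otimes I_\cD)\xi\in\Psi(F^2(H_n)\otimes\cD)=\cM$; since the range of a partial isometry is closed, $\cM$ is a genuine closed invariant subspace.

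For the converse (i)$\implies$(ii), set $Y:=P_\cM$ and verify the positivity hypothesis (ii) of Theorem~\ref{Beurling}, namely $P_\cM-\sum_{|\alpha|\geq 1}c_\alpha A_\alpha P_\cM A_\alpha^*\geq 0$. Two ingredients feed this. First, because $\cD^{pure}_{f^{-1}}\subset\cD^{pure}_{g^{-1}}$, Theorem~\ref{inclusion} gives that ${\bf W}^{(f)}$ is a pure $n$-tuple in $\cD_{g^{-1}}(F^2(H_n))$, so after tensoring with $I_\cK$ we have $\sum_{|\alpha|\geq 1}c_\alpha A_\alpha A_\alpha^*\leq I$. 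Second, invariance of $\cM$ under each $A_i$ (hence each $A_\alpha$) is equivalent to $\cM^\perp$ being invariant under each $A_\alpha^*$.

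Using the second ingredient, I would show that the operator $D:=P_\cM-\sum_{|\alpha|\geq 1}c_\alpha A_\alpha P_\cM A_\alpha^*$ annihilates $\cM^\perp$ and leaves $\cM$ invariant: both terms vanish on $\cM^\perp$ since $A_\alpha^*\cM^\perp\subset\cM^\perp$ forces $P_\cM A_\alpha^*\eta=0$ for $\eta\in\cM^\perp$, and the off-diagonal blocks vanish for the same reason. This reduces positivity of $D$ to positivity on $\cM$, where for $\xi\in\cM$ one computes
$$
\langle D\xi,\xi\rangle=\|\xi\|^2-\sum_{|\alpha|\geq 1}c_\alpha\|P_\cM A_\alpha^*\xi\|^2\geq \|\xi\|^2-\sum_{|\alpha|\geq 1}c_\alpha\|A_\alpha^*\xi\|^2\geq 0,
$$
the last inequality being exactly $\sum_{|\alpha|\geq 1}c_\alpha A_\alpha A_\alpha^*\leq I$. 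This establishes condition (ii) of Theorem~\ref{Beurling}.

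Applying Theorem~\ref{Beurling} then yields a Hilbert space $\cD$ and an operator $\Psi:F^2(H_n)\otimes\cD\to F^2(H_n)\otimes\cK$ with $P_\cM=\Psi\Psi^*$ and $\Psi(W_i^{(g)}\otimes I_\cD)=A_i\Psi$. The final step is to upgrade $\Psi$ to a partial isometry with the correct range: since $\Psi\Psi^*=P_\cM$ is an orthogonal projection, a short computation gives $\Psi\Psi^*\Psi=\Psi$, which is precisely the statement that $\Psi$ is a partial isometry, whence $\Psi\Psi^*$ is the projection onto the (closed) range of $\Psi$, forcing $\Psi(F^2(H_n)\otimes\cD)=\operatorname{Ran}P_\cM=\cM$. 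I expect the only genuine obstacle to be the positivity verification above, and in particular being careful that invariance of $\cM$ controls both the diagonal and off-diagonal blocks of $D$ relative to the decomposition $\cM\oplus\cM^\perp$; everything following the invocation of Theorem~\ref{Beurling} is routine operator algebra.
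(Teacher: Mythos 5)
Your proposal is correct and follows essentially the same route as the paper: both directions reduce to Theorem \ref{Beurling} with $Y=P_\cM$, using purity of ${\bf W}^{(f)}$ in the regular domain $\cD_{g^{-1}}$ to obtain $\sum_{|\alpha|\geq 1}c_\alpha (W_\alpha^{(f)}\otimes I_\cK)(W_\alpha^{(f)*}\otimes I_\cK)\leq I$ and the invariance of $\cM$ to verify the positivity hypothesis. Your block-decomposition check that $P_\cM-\sum_{|\alpha|\geq 1}c_\alpha (W_\alpha^{(f)}\otimes I_\cK)P_\cM(W_\alpha^{(f)*}\otimes I_\cK)\geq 0$ and your explicit upgrade of $\Psi$ to a partial isometry with range $\cM$ are just expanded versions of steps the paper compresses into one algebraic identity or leaves implicit.
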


\begin{proof}

Suppose that (i) holds.
Then $P_\cM (W_i^{(f)}\otimes I_\cK) P_\cM=(W_i^{(f)}\otimes I_\cK) P_\cM$ for every $i\in \{1,\ldots, n\}$. According to Proposition \ref{W},   ${\bf W}^{(f)}:=(W_1^{(f)},\ldots, W_n^{(f)})$  is a pure $n$-tuple in $ \cD_{f^{-1}}(F^2(H_n))$. 
Since
 $ \cD^{pure}_{f^{-1}}\subset \cD^{pure}_{g^{-1}}$,  we also  have   ${\bf W}^{(f)}\in  \cD^{pure}_{g^{-1}}(F^2(H_n))$. Since $\cD_{g^{-1}}$ is a regular noncommutative domain, we have  $g^{-1}=1-\sum_{|\alpha|\geq 1} c_\alpha Z_\alpha$ for some positive coefficients $\{c_\alpha\}_{\alpha\in \FF_n^+}$ with $c_{g_i}>0$. Therefore,  we have   $\sum_{|\alpha|\geq 1} c_\alpha W_\alpha^{(f)} W_\alpha^{(f)*}\leq I$ which implies 
 $$
\sum_{|\alpha|\geq 1} c_\alpha (W_\alpha^{(f)}\otimes I_\cK) P_\cM (W_\alpha^{(f)*}\otimes I_\cK)=P_\cM\left(\sum_{|\alpha|\geq 1} c_\alpha (W_\alpha^{(f)}\otimes I_\cK) P_\cM (W_\alpha^{(f)*}\otimes I_\cK)\right)P_\cM\leq P_\cM.
$$
Applying Theorem \ref{Beurling},  when $Y=P_\cM$, we deduce item (ii). The implication (ii)$\implies$(i) is obvious.
 The proof is complete.
\end{proof}

\bigskip

\begin{corollary}  \label{inv}   Let  $f$ be  an admissible free holomorphic function such that  $\frac{b_\alpha}{b_{g_i\alpha}}\leq 1$ for any $i\in \{1,\ldots,n\}$ and $\alpha\in \FF_n^+$.
Then   the following statements are equivalent.
\begin{enumerate}
\item[(i)] $\cM\subset F^2(H_n)\otimes \cK$ is a joint invariant subspace under  $W_1^{(f)}\otimes I_\cK,\ldots, W_n^{(f)}\otimes I_\cK$.
\item[(ii)] There is a Hilbert space $\cD$ and partial isometry $\Psi:F^2(H_n)\otimes \cD\to F^2(H_n)\otimes \cK$ such that
$$
\cM=\Psi(F^2(H_n)\otimes \cD)
$$
and 
$$
\Psi (S_i\otimes I_\cD)=(W_i^{(f)}\otimes I_\cK) \Psi,\qquad i\in \{1,\ldots, n\},
$$
\end{enumerate}
where $S_1,\ldots, S_n$ are the left creation operators on the full Fock space $F^2(H_n)$.
\end{corollary}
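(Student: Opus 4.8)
The plan is to realize Corollary \ref{inv} as the special case of Theorem \ref{inv-subspace} in which the second domain $\cD_{g^{-1}}$ is the noncommutative ball, whose universal model is precisely $(S_1,\ldots,S_n)$. First I would pin down the right $g$. Take $g:=1+\sum_{|\alpha|\geq 1}Z_\alpha$, i.e. the weights $b_\alpha^{(g)}=1$ for all $\alpha\in\FF_n^+$; then the diagonal operators $D_i^{(g)}$ are the identity and $W_i^{(g)}=S_i$. Its inverse is $g^{-1}=1-(Z_1+\cdots+Z_n)$, so $\cD_{g^{-1}}$ is a regular domain (here $c_{g_i}=1$ and $c_\alpha=0$ for $|\alpha|\geq 2$), and admissibility is immediate: condition (b) holds because $\sum_{|\alpha|=k}1=n^k$, while condition (c) holds because $\Delta_{g^{-1}}({\bf W}^{(g)},{\bf W}^{(g)*})=I-\sum_i S_iS_i^*=P_{\CC 1}$ has norm one. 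With this choice, statement (ii) of Theorem \ref{inv-subspace} becomes verbatim statement (ii) of the corollary, since $W_i^{(g)}\otimes I_\cD=S_i\otimes I_\cD$.

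The core of the argument is to check the hypothesis $\cD^{pure}_{f^{-1}}\subset\cD^{pure}_{g^{-1}}$ of Theorem \ref{inv-subspace}. By the equivalence (i)$\Leftrightarrow$(ii) of Theorem \ref{inclusion}, this inclusion holds if and only if ${\bf W}^{(f)}$ is a pure $n$-tuple in $\cD_{g^{-1}}(F^2(H_n))$, i.e. a pure row contraction. Using $W_\alpha^{(f)}e_\gamma=\sqrt{b_\gamma/b_{\alpha\gamma}}\,e_{\alpha\gamma}$ I would compute that $\sum_{i=1}^n W_i^{(f)}W_i^{(f)*}$ is diagonal, acting on a basis vector $e_\beta$ with $\beta=g_j\gamma$ by the scalar $b_\gamma/b_{g_j\gamma}$ (and by $0$ on $e_{g_0}$). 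Hence $\sum_i W_i^{(f)}W_i^{(f)*}\leq I$ precisely because of the standing assumption $b_\alpha/b_{g_i\alpha}\leq 1$; this is the only place where that hypothesis is used. Positivity and monotonicity of the completely positive map $\varphi(T):=\sum_i W_i^{(f)}TW_i^{(f)*}$ then give $\sum_{|\alpha|=k}W_\alpha^{(f)}W_\alpha^{(f)*}=\varphi^k(I)\leq I$ for every $k$. For purity, the same diagonal computation shows that $\varphi^k(I)$ annihilates every basis vector $e_\beta$ with $|\beta|<k$; combined with the uniform bound $\varphi^k(I)\leq I$, an $\varepsilon/3$ argument on the dense set of finitely supported vectors yields SOT-$\lim_{k\to\infty}\varphi^k(I)=0$, which is exactly purity of ${\bf W}^{(f)}$ in the ball.

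With the inclusion established, I would simply apply Theorem \ref{inv-subspace} to the pair $(f,g)$, obtaining the stated equivalence between $\cM$ being jointly invariant under $W_1^{(f)}\otimes I_\cK,\ldots,W_n^{(f)}\otimes I_\cK$ and the existence of a partially isometric multi-analytic operator $\Psi:F^2(H_n)\otimes\cD\to F^2(H_n)\otimes\cK$ with $\cM=\Psi(F^2(H_n)\otimes\cD)$ and $\Psi(S_i\otimes I_\cD)=(W_i^{(f)}\otimes I_\cK)\Psi$. I expect the main obstacle to be the verification of $\cD^{pure}_{f^{-1}}\subset\cD^{pure}_{g^{-1}}$, that is, the row-contractivity and purity computation for ${\bf W}^{(f)}$; everything else is bookkeeping about the specific $g$ together with direct citations of Theorems \ref{inclusion} and \ref{inv-subspace}.
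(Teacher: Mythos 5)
Your proposal is correct and follows essentially the same route as the paper: show that the hypothesis $b_\alpha/b_{g_i\alpha}\leq 1$ makes ${\bf W}^{(f)}$ a pure row contraction, identify the ball as the domain $\cD_{g^{-1}}$ with $g^{-1}=1-(Z_1+\cdots+Z_n)$ and universal model $(S_1,\ldots,S_n)$, and then apply Theorem \ref{inv-subspace}. The only differences are cosmetic: you verify admissibility of $g$ and invoke Theorem \ref{inclusion} explicitly to obtain $\cD^{pure}_{f^{-1}}\subset\cD^{pure}_{g^{-1}}$, steps the paper leaves implicit (its row-contractivity argument via $W_i^{(f)}=S_iD_i^{(f)}$ with $\|D_i^{(f)}\|\leq 1$ is the same computation you perform diagonally).
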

\begin{proof}  
Since   $f$ is  an admissible free holomorphic function such that  $\frac{b_\alpha}{b_{g_i\alpha}}\leq 1$ for any $i\in \{1,\ldots,n\}$ and $\alpha\in \FF_n^+$,  the associated model $(W_1^{(f)},\ldots, W_n^{(f)})$ is a row contraction, i.e. 
$$W_1^{(f)}W_1^{(f)*}+\cdots +W_n^{(f)}W_n^{(f)*}\leq I.$$
Indeed, define  the completely positive linear map $\varphi_{{\bf W}^{(f)}}:B(F^2(H_n))\to B(F^2(H_n))$ by setting
$\varphi_{{\bf W}^{(f)}}(X):=\sum_{i=1}^n W_i^{(f)} XW_i^{(f)*}$, where $X\in B(F^2(H_n))$.
Since $W_i^{(f)}=S_i D_i^{(f)}$ with $\|D_i^{(f)}\|\leq 1$, we deduce that
$$
\varphi_{{\bf W}^{(f)}}(I)=\sum_{i=1}^n S_i[D_i^{(f)}]^2 S_i^*\leq \varphi_S(I)\leq I.
$$
Consequently,  $\|\varphi^k_{{\bf W}^{(f)}}(I)\|\leq 1$ for any $k\in \NN$. Due to the definition of the weighted left creation operators $W_1,\ldots, W_n$, for every  $\alpha\in \FF_n^+$,  
$\varphi^k_{{\bf W}^{(f)}}(I)e_\alpha \to 0$ as $k\to \infty$. Now, it is clear that  $\varphi^k_{{\bf W}^{(f)}}(I)\to 0$ strongly as $k\to\infty$.
Therefore,  ${\bf W}^{(f)}$ is a pure $n$-tuple in $  \cD_{g^{-1}}(F^2(H_n))$, where $g^{-1}:=1-(Z_1+\cdots +Z_n)$. In this case, we have
$\cD_{g^{-1}}(F^2(H_n))=[B(F^2(H_n))^n]_1$ and the associated universal model is  the $n$-tuple 
$(S_1,\ldots, S_n)$ of left creation operators.  Applying Theorem \ref{inv-subspace}, we can complete the proof.
\end{proof}

\begin{proposition} \label{inclusions}  Let $g= 1+\sum_{ |\alpha|\geq 1} b_\alpha
Z_\alpha$ be an admissible free holomorphic function with inverse 
$g^{-1}=1+\sum_{ |\alpha|\geq 1} a_\alpha Z_\alpha.$ 
Define 
$$
g_1:= 1+\sum_{k=1}^\infty \left( \sum_{|\alpha|\geq 1} \frac{|a_\alpha|}{c} Z_\alpha\right)^k,\quad \text{ where } \  0<c<1,
$$
 and 
$$
g_2:= 1+\sum_{k=1}^\infty \left(\frac{Z_1+\cdots+ Z_n}{d}\right)^k,\quad \text{ where }  \ d=\sup_{{\alpha\in \FF_n^+}\atop{i= 1,\ldots, n}} \frac{b_\alpha}{b_{g_i\alpha}}. 
$$
 Then $\cD_{g_1^{-1}}$  and  $\cD_{g_2^{-1}}$ are regular domains and  $\cD^{pure}_{g_1^{-1}}\subset \cD^{pure}_{g^{-1}}\subset \cD^{pure}_{g_2^{-1}}$.
\end{proposition}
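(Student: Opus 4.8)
First I would compute the two inverses and read off regularity. Since $h_1:=\tfrac1c\sum_{|\alpha|\ge1}|a_\alpha|Z_\alpha$ and $h_2:=\tfrac1d(Z_1+\cdots+Z_n)$ have zero constant term, the defining series are geometric, $g_1=\sum_{k\ge0}h_1^k=(1-h_1)^{-1}$ and $g_2=(1-h_2)^{-1}$, so that
\[
g_1^{-1}=1-\tfrac1c\sum_{|\alpha|\ge1}|a_\alpha|Z_\alpha,\qquad g_2^{-1}=1-\tfrac1d(Z_1+\cdots+Z_n).
\]
All coefficients of degree $\ge1$ are nonpositive, and comparing the degree-one part of $g\,g^{-1}=1$ gives $a_{g_i}=-b_{g_i}\ne0$; hence the coefficient of each $Z_i$ in $g_1^{-1}$ (resp.\ $g_2^{-1}$) is $-b_{g_i}/c<0$ (resp.\ $-1/d<0$), so both are admissible regular domains.

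For $\cD^{pure}_{g^{-1}}\subset\cD^{pure}_{g_2^{-1}}$ I would argue directly. Let $X\in\cD^{pure}_{g^{-1}}(\cH)$ and put $\Delta:=\Delta_{g^{-1}}(X,X^*)\ge0$, so $\sum_\beta b_\beta X_\beta\Delta X_\beta^*=I$ strongly. Writing each word of length $\ge m$ as $\beta=\delta\gamma$ with $|\delta|=m$, iterating $b_{g_i\alpha}\ge d^{-1}b_\alpha$ to get $b_{\delta\gamma}\ge d^{-m}b_\gamma$, and using the pure identity on the inner sum, I obtain
\[
d^{-m}\sum_{|\delta|=m}X_\delta X_\delta^*\ \le\ \sum_{|\beta|\ge m}b_\beta X_\beta\Delta X_\beta^*\ =\ I-\sum_{|\beta|<m}b_\beta X_\beta\Delta X_\beta^*,
\]
whose right-hand side is a tail tending strongly to $0$. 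The case $m=1$ gives $\sum_iX_iX_i^*\le dI$, i.e.\ $\Delta_{g_2^{-1}}(X,X^*)\ge0$, while the general case gives $\text{SOT-}\lim_m\Phi^m(I)=0$ for $\Phi(Y):=d^{-1}\sum_iX_iYX_i^*$. Since $\cD_{g_2^{-1}}$ is regular, the telescoping identity $\sum_\beta d^{-|\beta|}X_\beta(I-\Phi(I))X_\beta^*=I-\text{SOT-}\lim_m\Phi^m(I)$ then shows $X\in\cD^{pure}_{g_2^{-1}}$.

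For $\cD^{pure}_{g_1^{-1}}\subset\cD^{pure}_{g^{-1}}$ I would invoke Theorem \ref{inclusion} with $f=g_1$, which reduces the inclusion to proving that the universal model ${\bf W}^{(g_1)}$ is a pure $n$-tuple in $\cD_{g^{-1}}(F^2(H_n))$. Here everything diagonalizes: since the $W_i^{(g_1)}$ are weighted shifts, each $W_\eta^{(g_1)}W_\eta^{(g_1)*}$ is diagonal in $\{e_\gamma\}$, and for any power series $\phi$ the operator $\sum_\eta\phi_\eta W_\eta^{(g_1)}W_\eta^{(g_1)*}$ is diagonal with entry $(\phi\,g_1)_\gamma/(g_1)_\gamma$ on $e_\gamma$, where $(\cdot)_\gamma$ is the coefficient of $Z_\gamma$. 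Taking $\phi=g^{-1}$ identifies $\Delta:=\Delta_{g^{-1}}({\bf W}^{(g_1)},{\bf W}^{(g_1)*})$ as diagonal with entries $(g^{-1}g_1)_\gamma/(g_1)_\gamma$, and inserting this diagonal weight and applying the same bookkeeping collapses, via $g\,g^{-1}=1$, to
\[
\Big(\sum_\beta b_\beta W_\beta^{(g_1)}\Delta W_\beta^{(g_1)*}\Big)e_\gamma=\frac{(g\,g^{-1}g_1)_\gamma}{(g_1)_\gamma}\,e_\gamma=\frac{(g_1)_\gamma}{(g_1)_\gamma}\,e_\gamma=e_\gamma,
\]
which is exactly the pure identity.

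The one genuine obstacle is justifying that these diagonal series are \emph{bounded}, so that the formal telescoping is legitimate, and that $\Delta\ge0$; both rest on coefficient estimates in which $0<c<1$ is decisive. Writing $\hat g:=1+\sum_{|\alpha|\ge1}|a_\alpha|Z_\alpha=1+ch_1$, the identity $\hat g\,g_1=(1+c)g_1$ in degrees $\ge1$ gives $|(g^{-1}g_1)_\gamma|\le(1+c)(g_1)_\gamma$, so $\Delta$ is bounded, with $\Delta\ge(1-c)I$ supplying positivity and the lower eigenvalue bound. For $\sum_\beta b_\beta W_\beta^{(g_1)}W_\beta^{(g_1)*}$ I would use the termwise domination $g\le(1-ch_1)^{-1}$ together with the partial-fraction identity $\frac{1}{(1-ch_1)(1-h_1)}=\frac{1}{1-c}\big(\frac{1}{1-h_1}-\frac{c}{1-ch_1}\big)$ and $1-c^{k+1}\le1$ to conclude $(g\,g_1)_\gamma\le(1-c)^{-1}(g_1)_\gamma$, whence this series is $\le(1-c)^{-1}I$. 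With these bounds the eigenvalue computation is rigorous, ${\bf W}^{(g_1)}\in\cD^{pure}_{g^{-1}}(F^2(H_n))$, and Theorem \ref{inclusion} yields the remaining inclusion.
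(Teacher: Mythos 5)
There is one genuine gap, and it sits exactly where you lean on Theorem~\ref{inclusion}. To deduce $\cD^{pure}_{g_1^{-1}}\subset\cD^{pure}_{g^{-1}}$ from the statement ``${\bf W}^{(g_1)}$ is pure in $\cD_{g^{-1}}(F^2(H_n))$'' you need the implication (ii)$\implies$(i) of Theorem~\ref{inclusion} with $f=g_1$, and that theorem assumes \emph{both} $f$ and $g$ are admissible free holomorphic functions. Admissibility of $g_1$ is nowhere verified in your argument, and it is not cosmetic: the passage from purity of the universal model ${\bf W}^{(g_1)}$ to purity (in $\cD_{g^{-1}}$) of an arbitrary pure element of $\cD_{g_1^{-1}}(\cH)$ goes through Theorem~\ref{model} applied to $g_1$, i.e.\ through the Berezin-kernel dilation $X_i^*=(W_i^{(g_1)*}\otimes I)|_\cH$, which is only available when $g_1$ is admissible. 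The proposition itself only asserts that $\cD_{g_1^{-1}}$ is a \emph{regular} domain, which is weaker. The gap is fixable with your own toolkit: from $g_1=1+h_1g_1$ and $a_{g_i}=-b_{g_i}$ one gets $(g_1)_{g_i\alpha}\ge (b_{g_i}/c)\,(g_1)_\alpha>0$, hence strict positivity of all coefficients and $\sup_\alpha (g_1)_\alpha/(g_1)_{g_i\alpha}\le c/b_{g_i}<\infty$; and your diagonal formula shows the partial sums in condition (c) of admissibility are diagonal with entries in $[0,1]$. But as written the step is missing. Note that the paper avoids this issue entirely: for this inclusion it takes any $X\in\cD_{g_1^{-1}}(\cH)$, observes $I-\sum_{|\alpha|\ge1}\frac{|a_\alpha|}{c}X_\alpha X_\alpha^*\ge0$, and cites Proposition 1.11 of \cite{Po-Noncommutative domains} (this is where $0<c<1$ does its work) to conclude $X\in\cD^{pure}_{g^{-1}}(\cH)$ directly, with no appeal to Theorem~\ref{inclusion} and no need for admissibility of $g_1$.

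The rest of your proof is correct, and it is amusing that you and the paper swap strategies on the two inclusions. For $\cD^{pure}_{g^{-1}}\subset\cD^{pure}_{g_2^{-1}}$ the paper invokes Theorem~\ref{inclusion} (with $f=g$), checking that ${\bf W}^{(g)}$ is pure in $\cD_{g_2^{-1}}(F^2(H_n))$ via $W_i^{(g)}=S_iD_i^{(g)}$, $\|D_i^{(g)}\|\le\sqrt d$, and $\Gamma^k_{{\bf W}^{(g)}}(I)e_\alpha\to0$; you instead argue element-wise from the purity identity, using $b_{\delta\gamma}\ge d^{-m}b_\gamma$ to get $\Phi^m(I)\le I-\sum_{|\beta|<m}b_\beta X_\beta\Delta X_\beta^*\to0$ and then telescoping. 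Your version is correct and self-contained, and has the side benefit of not requiring the (easy, but again unstated in the paper) admissibility of $g_2$. Likewise your diagonal computations for ${\bf W}^{(g_1)}$ --- the entry formula $(\phi\,g_1)_\gamma/(g_1)_\gamma$, the bound $|(g^{-1}g_1)_\gamma|\le(\hat g\,g_1)_\gamma=(1+c)(g_1)_\gamma$, the lower bound $\Delta\ge(1-c)I$, and the partial-fraction estimate $(g\,g_1)_\gamma\le(1-c)^{-1}(g_1)_\gamma$ --- are all correct, and together with the admissibility check above they would yield a valid alternative proof of the first inclusion; they are just not a substitute for that check.
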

\begin{proof} The fact that $\cD_{g_1^{-1}}$  and  $\cD_{g_2^{-1}}$ are regular domains is easy to see.
If $X=(X_1,\ldots, X_n)\in \cD_{g_1^{-1}}(\cH)$, then  $I- \sum_{|\alpha|\geq 1} \frac{|a_\alpha|}{c} X_\alpha X_\alpha^* \geq 0$. Using  Proposition  1.11 from  \cite{Po-Noncommutative domains} , we deduce that  $X\in \cD^{pure}_{g^{-1}}(\cH)$. consequently,   $\cD^{pure}_{g_1^{-1}}\subset \cD^{pure}_{g^{-1}}$.
Let ${\bf W}^{(g)}=(W_1^{(g)},\ldots, W_n^{(g)})$ be the universal model of the domain $\cD_{g^{-1}}$. According to the definition of the operator $W_i^{(g)}$, we have   $\|W_i^{(g)}\|\leq \sqrt{d}$. Since  the operators $W_1^{(g)},\ldots, W_n^{(g)}$  have orthogonal ranges we deduce that
$ \sum_{i=1}^n \frac{1}{d}W_i^{(g)}W_i^{(g)*}\leq I$. This shows that  ${\bf W}^{(g)}\in \cD_{g_2^{-1}}$, where 
$g_2^{-1}:=1-\sum_{i=1}^n \frac{1}{d} Z_i$.  According to Theorem \ref{inclusion},
to show that $\cD^{pure}_{g^{-1}}\subset \cD^{pure}_{g_2^{-1}}$, it is enough to prove that ${\bf W}^{(g)}\in \cD^{pure}_{g_2^{-1}}(F^2(H_n))$. To this end,  let us   prove that  SOT-$\lim_{k\to \infty}\Gamma_{{\bf W}^{(g)}}^k(I)=0$, where 
$\Gamma_{{\bf W}^{(g)}}(Y):=\sum_{i=1}^n \frac{1}{d}W_i^{(g)}YW_i^{(g)*}$. Indeed, since
$W_i^{(g)}=S_i D_i^{(g)}$, where $S_1,\ldots,  S_n$ are the left creation operators, we deduce that $\|D_i^{(g)}\|\leq \sqrt{d}$. Consequently,  $\Gamma_{{\bf W}^{(g)}}(I)\leq \sum_{i=1}^n S_iS_i^*\leq I$, which implies 
$\|\Gamma^k_{{\bf W}^{(g)}}(I)\|\leq 1$ for any $k\in \NN$. On the other hand, it is easy to see that,  for every  $\alpha\in \FF_n^+$,  
$\Gamma^k_{{\bf W}^{(g)}}(I)e_\alpha \to 0$ as $k\to \infty$. This shows that    $\Gamma^k_{{\bf W}^{(g)}}(I))\to 0$ strongly as $k\to\infty$, which implies  that  ${\bf W}^{(g)}\in \cD^{pure}_{g_2^{-1}}(F^2(H_n))$.
The proof is complete.
\end{proof}

\begin{theorem}\label{Be-gen}  Let $f= 1+\sum_{ |\alpha|\geq 1} b_\alpha
Z_\alpha$ be an arbitrary  admissible free holomorphic function and set 
$$d:=\sup \left\{\frac{b_\alpha}{b_{g_i\alpha}}:\ \alpha\in \FF_n^+, i\in \{1,\ldots, n\}\right\}.
$$
Then 
$\cM\subset F^2(H_n)\otimes \cK$ is a joint invariant subspace under  $W_1^{(f)}\otimes I_\cK,\ldots, W_n^{(f)}\otimes I_\cK$ if and only if 
there is a Hilbert space $\cD$ and  a partial isometry $\Psi:F^2(H_n)\otimes \cD\to F^2(H_n)\otimes \cK$ such that
$$
\cM=\Psi(F^2(H_n)\otimes \cD)
$$
and 
$$
\sqrt{d}\Psi (S_i\otimes I_\cD)=(W_i^{(f)}\otimes I_\cK) \Psi,\qquad i\in \{1,\ldots, n\},
$$
where $S_1,\ldots, S_n$ are the left creation operators on the full Fock space $F^2(H_n)$.
\end{theorem}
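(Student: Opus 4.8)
The plan is to reduce the general statement to the already-established regular case by combining Proposition~\ref{inclusions} with Theorem~\ref{inv-subspace}. First I would apply Proposition~\ref{inclusions} with the role of its ``$g$'' played by our admissible function $f$. This produces the auxiliary function
$$
g_2 = 1+\sum_{k=1}^\infty \left(\frac{Z_1+\cdots+Z_n}{d}\right)^k, \qquad d=\sup_{{\alpha\in \FF_n^+}\atop{i= 1,\ldots, n}}\frac{b_\alpha}{b_{g_i\alpha}},
$$
together with the two facts I need: that $\cD_{g_2^{-1}}$ is a regular domain, and that $\cD^{pure}_{f^{-1}}\subset\cD^{pure}_{g_2^{-1}}$.

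The key computation is to identify the universal model ${\bf W}^{(g_2)}$ explicitly. Expanding the geometric series in the free variables gives $g_2=\sum_{\alpha\in\FF_n^+} d^{-|\alpha|} Z_\alpha$, so the weights of $g_2$ are $b_\alpha^{(g_2)}=d^{-|\alpha|}$. Hence the diagonal factor in $W_i^{(g_2)}=S_i D_i^{(g_2)}$ satisfies
$$
D_i^{(g_2)} e_\alpha = \sqrt{\frac{b_\alpha^{(g_2)}}{b_{g_i\alpha}^{(g_2)}}}\, e_\alpha = \sqrt{\frac{d^{-|\alpha|}}{d^{-|\alpha|-1}}}\, e_\alpha = \sqrt{d}\, e_\alpha,
$$
so that $D_i^{(g_2)}=\sqrt{d}\,I$ and $W_i^{(g_2)}=\sqrt{d}\,S_i$ for each $i$. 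In other words, the universal model of the regular domain $\cD_{g_2^{-1}}$ is precisely the scaled tuple of left creation operators $(\sqrt{d}\,S_1,\ldots,\sqrt{d}\,S_n)$.

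With these ingredients the conclusion is immediate from Theorem~\ref{inv-subspace} applied to the pair $(f,g_2)$. Since $\cD_{g_2^{-1}}$ is regular and $\cD^{pure}_{f^{-1}}\subset\cD^{pure}_{g_2^{-1}}$, a subspace $\cM\subset F^2(H_n)\otimes\cK$ is jointly invariant under $W_1^{(f)}\otimes I_\cK,\ldots,W_n^{(f)}\otimes I_\cK$ if and only if there are a Hilbert space $\cD$ and a partial isometry $\Psi$ with $\cM=\Psi(F^2(H_n)\otimes\cD)$ and $\Psi(W_i^{(g_2)}\otimes I_\cD)=(W_i^{(f)}\otimes I_\cK)\Psi$. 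Substituting $W_i^{(g_2)}=\sqrt{d}\,S_i$ rewrites the intertwining relation as $\sqrt{d}\,\Psi(S_i\otimes I_\cD)=(W_i^{(f)}\otimes I_\cK)\Psi$, which is exactly the asserted identity, and both implications come for free from the equivalence.

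I do not anticipate a serious obstacle: the whole argument is an application of two earlier results, and the only genuine verification is the elementary weight computation identifying ${\bf W}^{(g_2)}$ with the scaled shift. The one point deserving a line of care is confirming that Proposition~\ref{inclusions} is being invoked with the correct normalization of $d$, so that the factor $\sqrt{d}$ appearing in the final intertwining relation matches the supremum defining $d$ in the theorem statement rather than its reciprocal.
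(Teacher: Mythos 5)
Your proposal is correct and follows essentially the same route as the paper: the paper's proof likewise invokes Proposition~\ref{inclusions} to produce the regular domain with $g^{-1}=1-\frac{1}{d}(Z_1+\cdots+Z_n)$, identifies its universal model as $(\sqrt{d}\,S_1,\ldots,\sqrt{d}\,S_n)$, and then applies Theorem~\ref{inv-subspace} exactly as you do. Your explicit weight computation $b_\alpha^{(g_2)}=d^{-|\alpha|}$, giving $W_i^{(g_2)}=\sqrt{d}\,S_i$, is the same identification the paper states without detail, and your normalization of $d$ matches the paper's.
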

\begin{proof}
 Using  Proposition \ref{inclusions}  and Theorem \ref{inv-subspace}, the proof  is similar to the proof of Corollary \ref{inv}.  
In this case, $g^{-1}=1-\frac{1}{d}(Z_1+\cdots +Z_n)$ and the universal model associated with the regular domain $\cD_{g^{-1}}$ is ${\bf W}^{(g)}=(\sqrt{d}S_1,\ldots, \sqrt{d}S_n)$.
\end{proof}

\bigskip

In what follows, we address the uniqueness of the Beurling-Lax-Halmos type representation of the joint invariant subspaces. We need some preliminary results.
 We recall that a   subspace $\cH\subseteq \cK$ is
called co-invariant under $\cS\subset B(\cK)$ if $X^*\cH\subseteq
\cH$ for any $X\in \cS$.

\begin{proposition}\label{cyclic}    Let $g= 1+\sum_{ |\alpha|\geq 1} b_\alpha
Z_\alpha$ be an admissible free holomorphic function with inverse $g^{-1}=1+\sum_{ |\gamma|\geq 1} a_\gamma
Z_\gamma$ and let ${\bf W}:=(W_1,\ldots, W_n)$ be  the corresponding  weighted left creation operators associated with $g$.
 If   
$\cM\subseteq F^2(H_n)\otimes \cK$ is  a co-invariant subspace under each operator
$W_i\otimes I_\cK$,  then  
\begin{equation*}
\overline{\text{\rm span}}\,\left\{\left(W_\alpha\otimes
I_\cK\right)\cM:\ \alpha\in \FF_n^+\right\}=F^2(H_n)\otimes \overline{(P_{\CC1}\otimes I_\cK) \cM}.
\end{equation*}
\end{proposition}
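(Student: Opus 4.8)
The plan is to prove the two inclusions separately. Write $\cN:=\overline{\text{span}}\{(W_\alpha\otimes I_\cK)\cM:\alpha\in\FF_n^+\}$ for the left-hand side and, identifying $\CC1\otimes\cK$ with $\cK$, set $\cL:=\overline{(P_{\CC1}\otimes I_\cK)\cM}\subseteq\cK$, so that the claim reads $\cN=F^2(H_n)\otimes\cL$. Throughout I would use the two elementary identities $W_\gamma e_{g_0}=b_\gamma^{-1/2}e_\gamma$ and $W_\beta^* e_\gamma=b_\delta^{1/2}b_\gamma^{-1/2}e_\delta$ when $\gamma=\beta\delta$ (and $W_\beta^*e_\gamma=0$ otherwise), both immediate from $W_i=S_iD_i$ and the definition of the diagonal operators $D_i$.

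For the inclusion $\cN\subseteq F^2(H_n)\otimes\cL$, I would first note that $F^2(H_n)\otimes\cL$ is a closed subspace invariant under each $W_i\otimes I_\cK$, so it suffices to show $\cM\subseteq F^2(H_n)\otimes\cL$. Writing $m=\sum_\beta e_\beta\otimes m_\beta\in\cM$, the point is that every coefficient $m_\beta$ lies in $\cL$. This follows from co-invariance: since $(W_\beta^*\otimes I_\cK)m\in\cM$, the formula for $W_\beta^*$ together with projection onto the constants gives $(P_{\CC1}\otimes I_\cK)(W_\beta^*\otimes I_\cK)m=b_\beta^{-1/2}(1\otimes m_\beta)$, which lies in $(P_{\CC1}\otimes I_\cK)\cM\subseteq\cL$. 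Hence $m_\beta\in\cL$ for all $\beta$, so $m\in F^2(H_n)\otimes\cL$.

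For the reverse inclusion $F^2(H_n)\otimes\cL\subseteq\cN$, I would first reduce to showing $1\otimes\cL\subseteq\cN$: once this is known, invariance of $\cN$ and the identity $(W_\gamma\otimes I_\cK)(1\otimes\ell)=b_\gamma^{-1/2}(e_\gamma\otimes\ell)$ give $e_\gamma\otimes\ell\in\cN$ for every $\gamma\in\FF_n^+$ and $\ell\in\cL$, and since $\{e_\gamma\}$ is an orthonormal basis of $F^2(H_n)$, the closed span of these vectors is precisely $F^2(H_n)\otimes\cL$. It therefore remains to place each generator $1\otimes m_{g_0}=(P_{\CC1}\otimes I_\cK)m$, for $m\in\cM$, inside $\cN$.

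This last step is the crux and the main obstacle: $\cN$ is only invariant, so its generators raise degree and there is no direct way to descend from $m$ to its degree-zero part. The key idea is to invoke Proposition \ref{W}, which expresses the constant projection as an SOT-convergent series $P_{\CC1}=\Delta_{g^{-1}}({\bf W},{\bf W}^*)=\sum_{p=0}^\infty\sum_{|\alpha|=p}a_\alpha W_\alpha W_\alpha^*$. Tensoring with $I_\cK$ and applying to $m$, I would realize $1\otimes m_{g_0}$ as the norm-limit of the partial sums $\sum_{|\alpha|\leq N}a_\alpha(W_\alpha\otimes I_\cK)(W_\alpha^*\otimes I_\cK)m$. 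Here co-invariance enters decisively: each $(W_\alpha^*\otimes I_\cK)m\in\cM$, so $(W_\alpha\otimes I_\cK)(W_\alpha^*\otimes I_\cK)m\in(W_\alpha\otimes I_\cK)\cM\subseteq\cN$, whence every partial sum lies in $\cN$. Since $\cN$ is closed, the limit $1\otimes m_{g_0}$ lies in $\cN$; taking the closed linear span over $m\in\cM$ yields $1\otimes\cL\subseteq\cN$ and completes the proof.
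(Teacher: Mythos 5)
Your proof is correct and follows essentially the same route as the paper's: the inclusion $\cN\subseteq F^2(H_n)\otimes\cL$ via co-invariance and the identity $(P_{\CC1}\otimes I_\cK)(W_\beta^*\otimes I_\cK)m=b_\beta^{-1/2}(1\otimes m_\beta)$, and the reverse inclusion by writing $P_{\CC1}=\Delta_{g^{-1}}({\bf W},{\bf W}^*)$ (Proposition \ref{W}) and using co-invariance to keep the partial sums $\sum_{|\alpha|\leq N}a_\alpha(W_\alpha\otimes I_\cK)(W_\alpha^*\otimes I_\cK)m$ inside the closed span $\cN$. Indeed your write-up makes explicit a step the paper leaves terse, namely why the limit $1\otimes m_{g_0}$ lands in $\cN$.
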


\begin{proof}
  Define the subspace 
 $\cE:=\overline{(P_{\CC1}\otimes I_\cK)\cM}\subset \cK$ and  let $\varphi\in \cM$, $\varphi\neq 0$,    with  Fourier representation
$\varphi=\sum_{\alpha\in \FF_n^+}e_\alpha\otimes h_\alpha$, 
$h_\alpha\in \cK.$
Let $\beta\in \FF_n^+$ be such that $h_\beta\neq 0$ and note that
$$
(P_{\CC1}\otimes I_\cK)(W_\beta^*\otimes I_\cK)\varphi=1\otimes
\frac{1}{\sqrt{b_\beta}} h_\beta.
$$
 Since $\cM$ is  a
co-invariant subspace under  each operator $W_i\otimes I_\cK$,
we deduce   that $h_\beta\in \cE$  and 
$$
(W_\beta\otimes I_\cK)(1\otimes
h_\beta)=\frac{1}{\sqrt{b_\beta}}e_\beta\otimes
h_\beta\in F^2(H_n)\otimes \cE, \quad \text{ for }\ \beta\in \FF_n^+.
$$
Consequentyly,   $
\varphi= \lim_{m\to\infty}\sum_{k=0}^m \sum_{|\alpha|=k}
 e_\beta\otimes h_\beta \in F^2(H_n)\otimes \cE,
$
which shows that  $\cM\subset F^2(H_n) \otimes \cE$ and
$$
\cY:= \overline{\text{\rm span}}\,\left\{\left(W_\alpha\otimes
I_\cK\right)\cM:\ \alpha\in \FF_n^+\right\}\subset F^2(H_n)\otimes \cE.
$$

To prove  the reverse inclusion, it is enough to  show  that $\cE\subset
\cY$.   To this end, let 
$h\in (P_{\CC1}\otimes I_\cK)\cM$ with  $h\neq 0$ and note that there exists $g\in \cM\subset
F^2(H_n)\otimes \cE$ such that $g=1\otimes
h+\sum\limits_{|\alpha|\geq 1} e_\alpha\otimes h_\alpha.$  According to Proposition \ref{W}, we have
$$
\Delta_{g^{-1}}({\bf W},{\bf W}^*):=\sum_{p=0}^\infty\sum_{ |\alpha|= p} a_\alpha  W_\alpha W_\alpha^*
$$
converges in the  strong operator topology and  $\Delta_{g^{-1}}({\bf W},{\bf W}^*)=P_{\CC1}$.
Consequently, 
\begin{equation*}
\begin{split}
1\otimes h&=(P_{\CC1}\otimes I_\cK) g 
=(\Delta_{g^{-1}}({\bf W},{\bf W}^*)\otimes I_\cK)g
\end{split}
\end{equation*}
 and, using the fact that  $\cM$ is co-invariant under  each operator $W_i\otimes I_\cK$,
we deduce that $h\in\cY$ for any $h\in (P_{\CC1}\otimes I_\cK)\cM$.
 Hence, we deduce that $(W_\alpha\otimes
I_\cK)(1\otimes \cE)\subset \cY$ for any $\alpha\in \FF_n^+$, which
implies
$
\frac{1}{\sqrt{b_\alpha}}e_\alpha\otimes \cE\subset
\cY$ for  all $\alpha\in \FF_n^+.$
Consequently, $F^2(H_n)\otimes \cE\subseteq \cY$, which completes the proof.
\end{proof}

Now, we can easily deduce the following result.

\begin{corollary}
 
 A subspace $\cM\subseteq F^2(H_n)\otimes \cK$ is reducing under each operator
 $W_i\otimes I_\cK$
if and only if   there exists a subspace $\cE\subseteq \cK$ such
that
\begin{equation*}
 \cM=F^2(H_n)\otimes \cE.
\end{equation*}
\end{corollary}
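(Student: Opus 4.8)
The plan is to derive both implications directly from Proposition~\ref{cyclic}, using the observation that a reducing subspace is precisely one that is simultaneously invariant and co-invariant under each $W_i\otimes I_\cK$.

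First I would handle the nontrivial direction. Suppose $\cM$ is reducing; then in particular it is co-invariant under each $W_i\otimes I_\cK$, so Proposition~\ref{cyclic} applies and yields
$$
\overline{\text{\rm span}}\,\left\{(W_\alpha\otimes I_\cK)\cM:\ \alpha\in\FF_n^+\right\}=F^2(H_n)\otimes\cE,
$$
where $\cE:=\overline{(P_{\CC1}\otimes I_\cK)\cM}\subseteq\cK$. It then remains only to identify the left-hand side with $\cM$ itself. Since $\cM$ is also invariant under each $W_i\otimes I_\cK$, a straightforward induction on the length $|\alpha|$ shows that $(W_\alpha\otimes I_\cK)\cM\subseteq\cM$ for every $\alpha\in\FF_n^+$; because $\cM$ is closed, the closed span on the left is contained in $\cM$. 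The reverse containment is immediate upon taking $\alpha=g_0$, for which $W_{g_0}\otimes I_\cK=I$, so that $\cM$ is itself one of the terms in the span. Combining the two inclusions gives $\cM=F^2(H_n)\otimes\cE$.

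For the converse I would simply verify that any subspace of the form $\cM=F^2(H_n)\otimes\cE$ is reducing: since each $W_i$ maps $F^2(H_n)$ into itself, we have $(W_i\otimes I_\cK)\cM\subseteq\cM$, and likewise $(W_i^*\otimes I_\cK)\cM\subseteq\cM$, so $\cM$ is both invariant and co-invariant. I do not anticipate any genuine obstacle, as Proposition~\ref{cyclic} already carries the analytic content. The only point requiring attention is the logical bookkeeping: it is the co-invariance half of the reducing hypothesis that activates Proposition~\ref{cyclic}, while the invariance half is precisely what collapses the cyclic span back onto $\cM$, so both halves of the assumption are genuinely needed.
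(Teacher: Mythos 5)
Your proof is correct and follows exactly the route the paper intends: the corollary is stated as an immediate consequence of Proposition~\ref{cyclic}, with the co-invariance half of the reducing hypothesis activating that proposition and the invariance half (plus the term $\alpha=g_0$) collapsing the cyclic span back onto $\cM$, just as you argue. The converse verification on elementary tensors is the standard one and is also what the paper leaves implicit.
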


\bigskip

Let ${\bf W}=(W_1,\ldots, W_n)$ be   the corresponding  weighted left creation operators associated with an admissible free holomorphic function $f$.
 Fix an $n$-tuple  $Y:=(Y_1,\ldots, Y_n)\in B(\cK)^n$.
A bounded linear operator $A:F^2(H_n)\otimes \cH\to  \cK$  is called multi-analytic with respect to 
  ${\bf W}$ and
$Y$  if
$
A(W_i\otimes I_\cH)=Y_i A, i\in \{1,\ldots, n\}.
$
The support of $A$, denoted by  $\text{\rm supp}\, A$, is the smallest reducing subspace of $F^2(H_n)\otimes \cH$ under the operators $W_1\otimes I_\cH,\ldots, W_n\otimes I_\cH$, containing the co-invariant subspace
$\cM:=\overline{A^*(\cK)}$. Equivalently, $\text{\rm supp}\, A$ is the smallest reducing subspace $\cN$ under $W_1\otimes I_\cH,\ldots, W_n\otimes I_\cH$ such that $A|_{\cN^\perp}=0$.
According to Proposition \ref{cyclic}, we have 
$$
\text{\rm supp}\, A=\overline{\text{\rm span}}\,\left\{\left(W_\alpha\otimes
I_\cK\right)\cM:\ \alpha\in \FF_n^+\right\}=F^2(H_n)\otimes \cG,
$$
where $\cG:=\overline{(P_{\CC1}\otimes I_\cH)\left(A^*(\cK)\right)}$. Note that $A|_{\text{\rm supp}\, A}:F^2(H_n)\otimes \cG\to \cK$ is a multi-analytic operator with respect to ${\bf W}$ and $Y$, and 
$AA^*=(A |_{\text{\rm supp}\, A})(A |_{\text{\rm supp}\, A})^*$.

Let $A_1:F^2(H_n)\otimes \cK_1\to \cK$ and $A_2:F^2(H_n)\otimes \cK_2\to  \cK$ be multi-analytic operators with respect to ${\bf W}$ and $Y$. We say that $A_1$ and $A_2$  {\it coincide} if there is a unitary operator $\tau:\cK_1\to \cK_2$ such that $A_1=A_2(I_{F^2(H_n)}\otimes \tau)$.
 
\begin{theorem} \label{unique}  Let ${\bf W}=(W_1,\ldots, W_n)$ be   the corresponding  weighted left creation operators associated with an admissible free holomorphic function $f$.
Let $A_1:F^2(H_n) \otimes \cK_1\to \cK$ and $A_2:F^2(H_n)\otimes \cK_2\to  \cK$ be multi-analytic operators with respect to ${\bf W}$ and $Y=(Y_1,\ldots, Y_n)\in B(\cK)^n$,  such that
$A_1A_1^*=A_2A_2^*.
 $
 Then there is a partial isometry $V:\cK_1\to \cK_2$ such that
$
A_1=A_2(I\otimes V),
$
where $I\otimes V:F^2(H_n)\otimes \cK_1\to F^2(H_n)\otimes \cK_2$ is   a partially isometric multi-analytic operator with respect to ${\bf W}$, having the  initial space $\text{\rm supp}\, A_1$ and the final space $\text{\rm supp}\, A_2$. In particular, $A_1|_{\text{\rm supp}\, A_1}$ coincides with $A_2|_{\text{\rm supp}\, A_2}$.
\end{theorem}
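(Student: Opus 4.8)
The plan is to reduce everything to the ``constant level'' of the two operators and to recover, from the single relation $A_1A_1^*=A_2A_2^*$, the corresponding relation between their symbols, after which the statement becomes a base-level Douglas factorization.

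First I would record the structural description of a multi-analytic operator. Using $W_\alpha 1=\frac{1}{\sqrt{b_\alpha}}e_\alpha$ together with $A_j(W_\alpha\otimes I)=Y_\alpha A_j$, one obtains, for the symbol $A_j^{(0)}\colon\cK_j\to\cK$ defined by $A_j^{(0)}h:=A_j(1\otimes h)$, the formula $A_j(e_\alpha\otimes h)=\sqrt{b_\alpha}\,Y_\alpha A_j^{(0)}h$. Thus $A_j$ is completely determined by $A_j^{(0)}$; moreover $(P_{\CC1}\otimes I)A_j^*k=1\otimes (A_j^{(0)})^*k$, so that $\cG_j=\overline{\text{range}\,(A_j^{(0)})^*}$ and, by Proposition \ref{cyclic}, $\text{supp}\,A_j=F^2(H_n)\otimes\cG_j$. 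In particular, via the structural formula and the case $\alpha=g_0$, the desired identity $A_1=A_2(I\otimes V)$ is equivalent to the single base-level identity $A_1^{(0)}=A_2^{(0)}V$.

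The hard part is to pass from $A_1A_1^*=A_2A_2^*$ to the corresponding relation $A_1^{(0)}(A_1^{(0)})^*=A_2^{(0)}(A_2^{(0)})^*$ for the symbols; I would do this by applying the ``inverse'' completely positive map built from $g^{-1}$. Using $Y_\gamma A_j=A_j(W_\gamma\otimes I)$ and Proposition \ref{W} (which gives $\Delta_{g^{-1}}({\bf W},{\bf W}^*)=\sum_\gamma a_\gamma W_\gamma W_\gamma^*=P_{\CC1}$ with convergence in the strong operator topology), one computes
$$
\sum_\gamma a_\gamma Y_\gamma (A_jA_j^*) Y_\gamma^* =A_j\Big(\big(\textstyle\sum_\gamma a_\gamma W_\gamma W_\gamma^*\big)\otimes I\Big)A_j^* =A_j(P_{\CC1}\otimes I)A_j^* =A_j^{(0)}(A_j^{(0)})^*,
$$
the last equality because $(P_{\CC1}\otimes I)A_j^*k=1\otimes(A_j^{(0)})^*k$. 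The two outer sides depend only on $A_jA_j^*$, and the convergence of the left-hand series is taken care of by Proposition \ref{W} (the partial sums of $\sum_\gamma a_\gamma W_\gamma W_\gamma^*\otimes I$ converge strongly and are sandwiched by $A_j(\cdot)A_j^*$). Hence the hypothesis $A_1A_1^*=A_2A_2^*$ forces $A_1^{(0)}(A_1^{(0)})^*=A_2^{(0)}(A_2^{(0)})^*$. This is the only place where admissibility, through Proposition \ref{W}, is really used.

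Finally I would finish with a Douglas argument at the base level. Since $\|(A_1^{(0)})^*k\|=\|(A_2^{(0)})^*k\|$ for every $k\in\cK$, the assignment $(A_2^{(0)})^*k\mapsto (A_1^{(0)})^*k$ extends to a unitary $\cG_2\to\cG_1$; extending its adjoint by zero yields a partial isometry $V\colon\cK_1\to\cK_2$ with initial space $\cG_1$, final space $\cG_2$, and $A_1^{(0)}=A_2^{(0)}V$. By the equivalence noted above, $A_1=A_2(I\otimes V)$, and $I\otimes V$ is automatically multi-analytic with respect to ${\bf W}$, being a partial isometry with initial space $F^2(H_n)\otimes\cG_1=\text{supp}\,A_1$ and final space $F^2(H_n)\otimes\cG_2=\text{supp}\,A_2$. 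Restricting $V$ to the unitary $\cG_1\to\cG_2$ then identifies $A_1|_{\text{supp}\,A_1}$ with $A_2|_{\text{supp}\,A_2}$. All steps after the symbol-level computation are routine; the whole difficulty is concentrated in recovering the relation between the symbols via the inverse map.
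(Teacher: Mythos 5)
Your proof is correct and follows essentially the same route as the paper's: both hinge on conjugating the relation $A_1A_1^*=A_2A_2^*$ by the maps $X\mapsto\sum_\gamma a_\gamma Y_\gamma X Y_\gamma^*$ and using $\Delta_{g^{-1}}({\bf W},{\bf W}^*)=P_{\CC1}$ (Proposition \ref{W}) to obtain $A_1(P_{\CC1}\otimes I_{\cK_1})A_1^*=A_2(P_{\CC1}\otimes I_{\cK_2})A_2^*$, then build the partial isometry $V$ from the resulting isometric correspondence between the ranges at the constant level, and finally propagate $A_1=A_2(I\otimes V)$ by multi-analyticity and density. Your packaging in terms of the symbols $A_j^{(0)}$ is only a cosmetic reorganization of the paper's argument, which works directly with $(P_{\CC1}\otimes I_{\cK_j})A_j^*$.
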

\begin{proof} Suppose $g^{-1}=1+\sum_{ |\alpha|\geq 1} a_\alpha
Z_\alpha$  and recall  (see Poposition \ref{W}) that
$$
\Delta_{g^{-1}}({\bf W},{\bf W}^*):=\sum_{p=0}^\infty\sum_{ |\alpha|= p} a_\alpha  W_\alpha W_\alpha^*
=P_{\CC1},
$$
where the convergence is in the strong operator topology.
Since  $A_1A_1^*=A_2A_2^*$,    we obtain
 \begin{equation*}
 \begin{split}
 A_1(P_{\CC1}\otimes I_{\cK_1})A_1^*&= A_1\left(\sum_{p=0}^\infty\sum_{ |\alpha|= p} a_\alpha  W_\alpha W_\alpha^*\otimes I_{\cK_1}\right)A_1^*\\
 &=\sum_{p=0}^\infty\sum_{ |\alpha|= p} a_\alpha  Y_\alpha A_1A_1^*Y_\alpha^*
 =\sum_{p=0}^\infty\sum_{ |\alpha|= p} a_\alpha  Y_\alpha A_2A_2^*Y_\alpha^* \\
 &=
 A_2\left(\sum_{p=0}^\infty\sum_{ |\alpha|= p} a_\alpha  W_\alpha W_\alpha^*\otimes I_{\cK_2}\right)A_2^*
 =A_2(P_{\CC1}\otimes I_{\cK_2})A_2^*.
 \end{split}
 \end{equation*}
 Hence, we deduce that
 $$
 \left\|(P_{\CC1}\otimes I_{\cK_1})A_1^*x\right\|=\left\|(P_{\CC1}\otimes I_{\cK_2})A_2^*x\right\|,\qquad x\in \cK.
 $$
 Set $\cG_s:=\overline{(P_{\CC1}\otimes I_{\cK_s})\left(A_s^*(  \cK)\right)}$ for  $s=1,2$ and define the unitary operator $U:\cG_1\to \cG_2$ by
 $$
 U((P_{\CC1}\otimes I_{\cK_1})A_1^*x):=(P_{\CC1}\otimes I_{\cK_2})A_2^*x, \qquad  x\in \cK.
 $$
 This implies that there is a unique partial isometry $V:\cK_1\to \cK_2$ with initial space $\cG_1$ and final space $\cG_2$, extending $U$. Moreover, we have $A_1V^*=A_2|_{\cK_2}$, where we use the natural identification $\CC\otimes \cK_j=\cK_j\subset F^2(H_n)\otimes \cK_j$, $j=1,2$. Since $A_1, A_2$ are multi-analytic operators with respect to ${\bf W}$ and $Y$, we have
 \begin{equation*}
 \begin{split}
 A_1\left(I\otimes V^*\right) (W_\alpha\otimes I_{\cK_2}) (1\otimes h)
 &=Y_\alpha A_1 (I\otimes V^*)(1\otimes h)\\
 &=Y_\alpha A_2(1\otimes h)=A_2(W_\alpha\otimes I_{\cK_2})(1\otimes h)
 \end{split}
 \end{equation*}
for any $h\in \cK_2$ and $\alpha\in \FF_n^+$. Hence, taking into account that
  $$
  \Span \left\{ (W_\alpha\otimes I_{\cK_2})(\CC\otimes \cK_2): \  \alpha \in \FF_n^+\right\}=F^2(H_n)\otimes \cK_2,
  $$ 
  we deduce that $ A_1\left(I\otimes V^*\right)=A_2$, which implies $A_1=A_2(I\otimes V)$ and completes the proof.
\end{proof}

 Theorem \ref{inv-subspace}  shows that $\cM\subset F^2(H_n)\otimes \cK$ is a joint invariant subspace under  $W_1^{(f)}\otimes I_\cK,\ldots, W_n^{(f)}\otimes I_\cK$ if and only if there is a Hilbert space $\cD$
such that   $\cM=\Psi(F^2(H_n)\otimes \cD)$, where 
   $\Psi:F^2(H_n)\otimes \cD\to F^2(H_n)\otimes \cK$  is a  partial isometry such that
$$
\Psi (W_i^{(g)}\otimes I_\cD)=(W_i^{(f)}\otimes I_\cK) \Psi,\qquad i\in \{1,\ldots, n\}.
$$
Note that setting $\Psi_0:=\Psi|_{\text{\rm supp}\, \Psi}:F^2(H_n) \otimes \cG\to F^2(H_n)\otimes\cK$, where ${\text{\rm supp}\, \Psi}=F^2(H_n) \otimes \cG$, we also have
$$
\cM=\Psi_0\left(F^2(H_n) \otimes \cG\right) \quad \text{ and } \quad
\Psi_0 (W_i^{(g)}\otimes I_\cG)=(W_i^{(f)}\otimes I_\cK) \Psi_0
$$
and $\Psi_0$ is a partially isometric multi--analytic operator.
Therefore, all the joint invariant subspaces under $W_1^{(f)}\otimes I_\cK,\ldots, W_n^{(f)}\otimes I_\cK$ can be represented using partially isometric multi-analytic operators acting on their support. In this case, the representation turns out to be unique. Indeed, a simple application of
Theorem \ref{unique} yields the following result.

\begin{corollary} \label{unique2} Let $f$ and $g$ be two admissible free holomorphic functions and let 
$${\bf W}^{(f)}:=(W_1^{(f)},\ldots, W_n^{(f)})\quad  \text{ and } \quad  {\bf W}^{(g)}:=(W_1^{(g)},\ldots, W_n^{(g)})
$$
 be the  associted universal models. Assume that $\cD_{g^{-1}}$ is a regular domain and 
 $\cD^{pure}_{f^{-1}}\subset \cD^{pure}_{g^{-1}}.$
  The Beurling-Lax-Halmos type representation for  the joint invariant subspace  under $W_1^{(f)}\otimes I_\cK,\ldots, W_n^{(f)}\otimes I_\cK$ is essentially unique. More precisely, if
$$
\Psi_1\left(F^2(H_n)\otimes \cE_1\right)=\Psi_2\left(F^2(H_n)\otimes \cE_2\right),
$$
where $\Psi_j:F^2(H_n)\otimes \cE_j\to F^2(H_n)\otimes \cK$, $j=1,2$, are partially isometric multi-analytic operators, then there is a partial isometry $V:\cE_1\to \cE_2$ such that $\Psi_1=\Psi_2(I \otimes V)$. In particular,
$\Psi_1|_{\text{\rm supp}\, \Psi_1}$ coincides with $\Psi_2|_{\text{\rm supp}\, \Psi_2}$.
\end{corollary}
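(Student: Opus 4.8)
The plan is to obtain this as an immediate consequence of Theorem \ref{unique}, the only real work being to cast the two representing operators into the framework of that theorem and to convert the equality of their ranges into an equality of range projections.

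First I would identify the relevant model and multipliers. By the definition of a partially isometric multi-analytic operator and by Theorem \ref{inv-subspace}, each $\Psi_j:F^2(H_n)\otimes \cE_j\to F^2(H_n)\otimes\cK$ satisfies
$$\Psi_j\,(W_i^{(g)}\otimes I_{\cE_j})=(W_i^{(f)}\otimes I_\cK)\,\Psi_j,\qquad i\in\{1,\ldots,n\}.$$
Thus, taking in Theorem \ref{unique} the admissible function to be $g$, so that ${\bf W}:={\bf W}^{(g)}$, and setting $Y:=(W_1^{(f)}\otimes I_\cK,\ldots,W_n^{(f)}\otimes I_\cK)\in B(F^2(H_n)\otimes\cK)^n$, both $\Psi_1$ and $\Psi_2$ are multi-analytic operators with respect to ${\bf W}$ and $Y$. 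That $g$ is admissible is part of the hypotheses, so ${\bf W}^{(g)}$ is a genuine universal model and Theorem \ref{unique} applies.

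Next I would translate the hypothesis on ranges. Since $\Psi_j$ is a partial isometry, its range $\cM_j:=\Psi_j(F^2(H_n)\otimes\cE_j)$ is closed and $\Psi_j\Psi_j^*=P_{\cM_j}$, the orthogonal projection of $F^2(H_n)\otimes\cK$ onto $\cM_j$. The assumed equality $\Psi_1(F^2(H_n)\otimes\cE_1)=\Psi_2(F^2(H_n)\otimes\cE_2)$ means $\cM_1=\cM_2$, and therefore
$$\Psi_1\Psi_1^*=P_{\cM_1}=P_{\cM_2}=\Psi_2\Psi_2^*.$$

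Finally, applying Theorem \ref{unique} with $A_1:=\Psi_1$ and $A_2:=\Psi_2$ produces a partial isometry $V:\cE_1\to\cE_2$ with $\Psi_1=\Psi_2(I\otimes V)$, and the last assertion of that theorem gives that $\Psi_1|_{\supp\Psi_1}$ coincides with $\Psi_2|_{\supp\Psi_2}$, as claimed. I do not anticipate a genuine obstacle: the argument is purely a matter of correctly matching the intertwining data to the roles of ${\bf W}$ and $Y$ in Theorem \ref{unique} and recalling that a partial isometry's range projection equals $\Psi\Psi^*$. The regularity of $\cD_{g^{-1}}$ and the inclusion $\cD^{pure}_{f^{-1}}\subset\cD^{pure}_{g^{-1}}$ are needed only to guarantee, via Theorem \ref{inv-subspace}, that such representing operators $\Psi_j$ exist in the first place, and play no further role in the uniqueness argument itself.
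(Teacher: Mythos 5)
Your proposal is correct and follows exactly the paper's route: the paper derives Corollary \ref{unique2} as ``a simple application of Theorem \ref{unique}'', and your argument supplies precisely the needed details, namely taking ${\bf W}:={\bf W}^{(g)}$ and $Y:=(W_1^{(f)}\otimes I_\cK,\ldots,W_n^{(f)}\otimes I_\cK)$, and observing that equality of the (closed) ranges of the partial isometries gives $\Psi_1\Psi_1^*=P_{\cM}=\Psi_2\Psi_2^*$. Your closing remark that the regularity of $\cD_{g^{-1}}$ and the inclusion $\cD^{pure}_{f^{-1}}\subset\cD^{pure}_{g^{-1}}$ enter only through the existence statement of Theorem \ref{inv-subspace}, not through the uniqueness argument, is also accurate.
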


\bigskip

\section{Minimal dilations   for pure $n$-tuples  of operators and dilation index}

In this section, we study the minimal dilations of the pure $n$-tuples of operators in the noncommutative domain $\cD_{g^{-1}}(\cH)$. We show that, under certain conditions, the minimal dilation is unique. In addition, we introduce the dilation index and discuss its significance in dilation theory.

 Let $g$ be an admissible  free holomorphic function and let  ${\bf W}=(W_1,\ldots, W_n)$ be  the universal model associated with  $g$. 
 \begin{definition}
 Given  $T=(T_1,\ldots, T_n)\in \cD^{pure}_{g^{-1}}(\cH)$ and   a Hilbert space $\cG$, we say that the $n$-tuple  $(W_1\otimes I_\cG,\ldots, W_n\otimes I_\cG)$  is a dilation (coextension)  of  $T$  if 
$$
T_i^*=(W_i^*\otimes I_\cG)|_{\cH},\qquad i\in \{1,\ldots, n\},
$$
where $\cH$ is  identified with a coinvariant subspace  under each operators $W_i\otimes I_\cG$.   The dilation is called minimal if 
$$\overline{\text{\rm span}}\,\left\{\left(W_\alpha\otimes
I_\cG\right)\cH:\ \alpha\in \FF_n^+\right\}=F^2(H_n)\otimes \cG.
$$
The dilation index  of $T$, denoted by $d_T$, is the minimum dimension  of $\cG$ such that 
$(W_1\otimes I_\cG,\ldots, W_n\otimes I_\cG)$  is a dilation of  $T$.
\end{definition}
The first result gives some equivalent statements for the minimality condition.

\begin{proposition}\label{equiv}
Let $\cH\subseteq F^2(H_n)\otimes \cG$ be   a co-invariant subspace under
each operator $W_i\otimes I_\cG$. Then   the following statements are equivalent.
\begin{enumerate}
\item[(i)]
$\overline{\text{\rm span}}\,\left\{\left(W_\alpha\otimes
I_\cG\right)\cH:\ \alpha\in \FF_n^+\right\}=F^2(H_n)\otimes \cG$;
\item[(ii)] there is no proper subspace $\cN$  of $\cG$ such that $\cH\subset F^2(H_n)\otimes \cN$;

\item[(iii)] $ \overline{(P_{\CC1}\otimes I_\cG) \cH}=\cG$;
\item[(iv)] $\cH^\perp \bigcap \cG=\{0\}$.

\end{enumerate}
\end{proposition}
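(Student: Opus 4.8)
The plan is to reduce every equivalence to a single application of Proposition \ref{cyclic}, which already contains the substantive computation. Writing $\cK=\cG$ and $\cM=\cH$ in that proposition, and setting $\cE:=\ol{(P_{\CC1}\otimes I_\cG)\cH}\subseteq \cG$ (after the usual identification $\CC1\otimes\cG=\cG$), I would first record the identity
$$
\ol{\text{\rm span}}\,\left\{\left(W_\alpha\otimes I_\cG\right)\cH:\ \alpha\in \FF_n^+\right\}=F^2(H_n)\otimes \cE.
$$
The left-hand side equals $F^2(H_n)\otimes\cG$ precisely when $\cE=\cG$, so the equivalence (i)$\iff$(iii) is immediate from this identity, with no further work.

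For (ii)$\iff$(iii), the key observation is that $\cE$ is the \emph{smallest} closed subspace $\cN\subseteq\cG$ with $\cH\subseteq F^2(H_n)\otimes\cN$. On one hand, the proof of Proposition \ref{cyclic} already establishes $\cH\subseteq F^2(H_n)\otimes\cE$, so $\cE$ is one such subspace. On the other hand, if $\cH\subseteq F^2(H_n)\otimes\cN$ for some closed $\cN$, then applying $P_{\CC1}\otimes I_\cG$ keeps vectors inside $\CC1\otimes\cN=\cN$, whence $(P_{\CC1}\otimes I_\cG)\cH\subseteq\cN$ and therefore $\cE\subseteq\cN$. Thus $\cE=\cG$ holds exactly when no proper closed subspace $\cN$ satisfies $\cH\subseteq F^2(H_n)\otimes\cN$, which is precisely (ii).

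For (iv)$\iff$(iii), I would compute the orthogonal complement of $\cE$ inside $\cG$. For $k\in\cG$, identified with $1\otimes k\in\CC1\otimes\cG$, and any $\varphi\in\cH$, the vector $1\otimes k$ lies in the range of $P_{\CC1}\otimes I_\cG$, so $\left\langle 1\otimes k,\varphi\right\rangle=\left\langle 1\otimes k,(P_{\CC1}\otimes I_\cG)\varphi\right\rangle$. Hence $1\otimes k\perp\cH$ in $F^2(H_n)\otimes\cG$ if and only if $k\perp\cE$ in $\cG$, giving $\cH^\perp\cap\cG=\cG\ominus\cE$. Consequently $\cH^\perp\cap\cG=\{0\}$ exactly when $\cE=\cG$, which is (iii).

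Since Proposition \ref{cyclic} supplies the only nontrivial input, I do not anticipate a genuine obstacle; the remaining work is routine Hilbert-space bookkeeping with the projection $P_{\CC1}\otimes I_\cG$ and the identification $\CC1\otimes\cG=\cG$. The one point requiring care is to keep track of whether a given subspace is being taken in $\cG$ or in $F^2(H_n)\otimes\cG$, and it is exactly this distinction that makes the passage between (iii) and (iv) transparent.
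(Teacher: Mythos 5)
Your proof is correct, and it relies on the same key input as the paper's own proof: everything is routed through Proposition \ref{cyclic} and the subspace $\cE=\overline{(P_{\CC1}\otimes I_\cG)\cH}$, with condition (iii) serving as the hub. The genuine difference is in how you handle (iii)$\iff$(iv). The paper proves (iii)$\implies$(iv) by contradiction, using the co-invariance of $\cH$: a nonzero $x\in\cH^\perp\cap\cG$ generates $F^2(H_n)\otimes\CC x\subset\cH^\perp$ because $\cH^\perp$ is invariant under each $W_i\otimes I_\cG$, forcing $P_\cG\cH\subset\cG\ominus\CC x$; it then proves (iv)$\implies$(iii) by a separate contradiction argument which is, in essence, one direction of your computation. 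You instead establish the single identity $\cH^\perp\cap\cG=\cG\ominus\cE$, from which both directions follow at once; this is cleaner, and it exposes the fact that the equivalence (iii)$\iff$(iv) is pure Hilbert-space bookkeeping requiring no co-invariance at all (co-invariance enters only through Proposition \ref{cyclic}, i.e., in tying (i) and (ii) to (iii)). Your treatment of (ii)$\iff$(iii) via the observation that $\cE$ is the smallest closed subspace $\cN\subseteq\cG$ with $\cH\subseteq F^2(H_n)\otimes\cN$ (obtained by applying $P_{\CC1}\otimes I_\cG$) is likewise a small unification of what the paper does in two separate implications. One cosmetic caveat, shared with the paper: in (ii) the word ``subspace'' must be read as ``closed subspace,'' since $F^2(H_n)\otimes\cN$ only has its intended meaning for closed $\cN$; both arguments silently make this interpretation.
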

\begin{proof} Since the implication (i)$\implies$(ii) is obvious, we prove that (ii)$\implies$(i).
Assume that (ii) holds. According to  Proposition \ref{cyclic}, we have 
$$\overline{\text{\rm span}}\,\left\{\left(W_\alpha\otimes
I_\cG\right)\cH:\ \alpha\in \FF_n^+\right\}=F^2(H_n)\otimes\overline{P_\cG\cH},
$$
under the identification of  $\CC1\otimes \cG$ with $\cG$,
which shows that $\cH\subset F^2(H_n)\otimes\overline{P_\cG\cH}$. Consequently, we must have $\cG=\overline{P_\cG\cH}$. Therefore, items  (i) and (iii) hold. Note also that the implication (iii)$\implies$ (i) is due to Proposition \ref{cyclic}.

Now, we assume  that item (iii) holds and suppose that there is  $x\in \cH^\perp \bigcap \cG$ such that $x\neq 0$.  Since $\cH^\perp$ is invariant  under each operator $W_i\otimes I_\cG$, \ $i\in\{1,\ldots, n\}$, we deduce that
$F^2(H_n)\otimes \CC x\subset \cH^\perp$. This shows that $\cH\subset F^2(H_n)\otimes (\cG\ominus \CC x)$ and, consequently, $P_\cG \cH\subset  \cG\ominus \CC x$, which is a
 contradiction.
 
 Assume that (iv) holds and suppose that $ \overline{(P_{\CC1}\otimes I_\cG) \cH}\neq \cG$. Then there is $y\in \cG$, $y\neq 0$ such that $y\perp (P_{\CC1}\otimes I_\cG) \cH$. Hence, we have
 $\left<  y,h\right>
=\left< P_\cG y,h\right>=0$ for any $h\in \cH$, which implies $y\in \cH^\perp$. Therefore,
$y\in \cH^\perp \bigcap \cG=\{0\}$, which is a contradiction.  This shows that (iii) is equivalent to (iv).
\end{proof}

\begin{proposition}  \label{minimal} Let $T=(T_1,\ldots, T_n)\in \cD_{g^{-1}}(\cH)$ be a pure $n$-tuple and  assume that
$$
T_i^*=(W_i^*\otimes I_\cG)|_\cH,\qquad i\in\{1,\ldots, n\},
$$
where $\cG$ is a Hilbert space and $\cH$ is a co-invariant subspace  under  each operator $W_1\otimes I_\cG,\ldots, W_n\otimes I_\cG$.  Then
$$
\dim \overline{ \Delta_{g^{-1}}(T,T^*)\cH}\leq \dim\cG.
$$
If $(W_1\otimes I_\cG,\ldots, W_n\otimes I_\cG)$ is a minimal dilation of $T$, then 
$\dim \overline{ \Delta_{g^{-1}}(T,T^*)\cH}=d_T,
$
where $d_T$ is the dilation index.
\end{proposition}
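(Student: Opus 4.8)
The plan is to reduce the computation of $\Delta_{g^{-1}}(T,T^*)$ to a single projection and then read off the dimension from a polar decomposition. The crucial identity I would establish first is
$$
\Delta_{g^{-1}}(T,T^*)=P_\cH\,(P_{\CC1}\otimes I_\cG)|_\cH,
$$
where $P_\cH$ denotes the orthogonal projection of $F^2(H_n)\otimes\cG$ onto $\cH$. To obtain it I would exploit the co-invariance of $\cH$: since $\cH$ is invariant under each $W_i^*\otimes I_\cG$, we have $T_\alpha^*=(W_\alpha^*\otimes I_\cG)|_\cH$, hence $T_\alpha T_\alpha^*=P_\cH(W_\alpha W_\alpha^*\otimes I_\cG)|_\cH$ for every $\alpha\in\FF_n^+$. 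Summing against the coefficients $a_\alpha$ of $g^{-1}$ and invoking Proposition \ref{W}, which asserts that $\sum_{p}\sum_{|\alpha|=p}a_\alpha W_\alpha W_\alpha^*=P_{\CC1}$ with strong convergence, then yields the displayed formula (compression by $P_\cH$ commutes with the limit).

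Next I would set $Q:=(P_{\CC1}\otimes I_\cG)|_\cH$, regarded as an operator $\cH\to\cG$ under the identification $\CC1\otimes\cG\cong\cG$. Because $P_{\CC1}\otimes I_\cG$ is a projection, the identity above reads precisely $\Delta_{g^{-1}}(T,T^*)=Q^*Q$. A polar decomposition $Q=U|Q|$ produces a partial isometry $U$ with initial space $\overline{\text{\rm range}\,|Q|}=\overline{\Delta_{g^{-1}}(T,T^*)\cH}$ and final space $\overline{\text{\rm range}\,Q}\subseteq\cG$, so that
$$
\dim\overline{\Delta_{g^{-1}}(T,T^*)\cH}=\dim\overline{\text{\rm range}\,Q}\leq\dim\cG,
$$
which is the first assertion.

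For the dilation-index identity I would argue in two directions. The inequality just proved holds for every dilation space $\cG$, hence $\dim\overline{\Delta_{g^{-1}}(T,T^*)\cH}\leq d_T$. For the reverse, I would exhibit a dilation of exactly this dimension: Theorem \ref{model} provides the Berezin dilation into $F^2(H_n)\otimes\cD$ with $\cD=\overline{\Delta_{g^{-1}}(T,T^*)\cH}$, and I would check via Proposition \ref{equiv}(iii) that it is minimal, since $(P_{\CC1}\otimes I_\cD)K_{g,T}h=1\otimes\Delta_{g^{-1}}(T,T^*)^{1/2}h$ has closed linear span equal to $\CC1\otimes\cD$. Thus $d_T\leq\dim\cD=\dim\overline{\Delta_{g^{-1}}(T,T^*)\cH}$, giving equality. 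Finally, when the given $\cG$ affords a minimal dilation, Proposition \ref{equiv}(iii) forces $\overline{\text{\rm range}\,Q}=\cG$, so the polar-decomposition computation sharpens to $\dim\overline{\Delta_{g^{-1}}(T,T^*)\cH}=\dim\cG$, confirming that every minimal dilation realizes the minimal dimension $d_T$.

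The main obstacle—really the only nonroutine point—is justifying the reduction formula $\Delta_{g^{-1}}(T,T^*)=P_\cH(P_{\CC1}\otimes I_\cG)|_\cH$, that is, interchanging the compression $P_\cH(\,\cdot\,)|_\cH$ with the infinite sum $\sum a_\alpha W_\alpha W_\alpha^*$. This is exactly where Proposition \ref{W} (strong convergence to $P_{\CC1}$) is needed, so that the limit survives the compression; once this identity is secured, everything reduces to elementary dimension counting and the properties of the Berezin kernel.
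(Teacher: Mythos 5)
Your proof is correct, and its first half coincides with the paper's: both rest on the identity $\Delta_{g^{-1}}(T,T^*)=P_\cH(P_{\CC1}\otimes I_\cG)|_\cH$, obtained by compressing the series of Proposition \ref{W} to the co-invariant subspace $\cH$. Where you diverge is in the second half. The paper never constructs a dilation: it works only with the given minimal one, shows by contradiction (using Proposition \ref{equiv}, a nonzero $x_0\in\cG\cap\cH^\perp$ would violate minimality) that $\dim\overline{P_\cH\cG}=\dim\cG$, and concludes $\dim\cG=\dim\overline{\Delta_{g^{-1}}(T,T^*)\cH}$, the passage to $d_T$ being left implicit. You instead (a) replace the contradiction argument by the polar decomposition of $Q=(P_{\CC1}\otimes I_\cG)|_\cH$, whose initial and final spaces immediately give $\dim\overline{\Delta_{g^{-1}}(T,T^*)\cH}=\dim\overline{(P_{\CC1}\otimes I_\cG)\cH}$, so that Proposition \ref{equiv}(iii) finishes the minimal case; and (b) invoke Theorem \ref{model} to exhibit the canonical Berezin dilation on $F^2(H_n)\otimes\cD$ with $\cD=\overline{\Delta_{g^{-1}}(T,T^*)\cH}$, which yields the unconditional equality $d_T=\dim\overline{\Delta_{g^{-1}}(T,T^*)\cH}$ for every pure tuple, not merely in the presence of a minimal dilation. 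This is slightly stronger than what the paper's proof records and makes the link to the dilation index explicit; the cost is a (mild) dependence on Theorem \ref{model}, which the paper's argument avoids. One inessential remark: your verification that the Berezin dilation is minimal is not needed for the inequality $d_T\leq\dim\cD$ --- the mere existence of that dilation suffices --- although it agrees with the paper's own discussion preceding Theorem \ref{Uni}.
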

\begin{proof} Since 
$$
\Delta_{g^{-1}}(T,T^*)\cH =P_\cH(\Delta_{g^{-1}}(W,W^*)\otimes I_\cG)|_\cH= P_\cH(P_{\CC1}\otimes I_\cG)|_\cH=
P_\cH P_\cG \cH,
$$
we deduce that $\dim \overline{ \Delta_{g^{-1}}(T,T^*)\cH}\leq \dim \cG$.

Now, assume that $(W_1\otimes I_\cG,\ldots, W_n\otimes I_\cG)$ is a minimal dilation of $T$.
According to Proposition \ref {equiv},         $ \overline{(P_{\CC1}\otimes I_\cG) \cH}=\cG$.
Hence, 
$\overline{ \Delta_{g^{-1}}(T,T^*)\cH}=\overline{P_\cH P_\cG\cH}=\overline{P_\cH\cG}$. Assume, by contradiction,  that
\begin{equation}\label{GG}
\dim \overline{P_\cH\cG} <\dim \cG.
\end{equation}
Then there is $x_0\in \cG$, $x_0\neq 0$ such that $P_\cH x_0=0$. Indeed, if $P_\cH x\neq 0$ for any $x\in \cG$, then the operator $P_\cH|_\cG: \cG\to \overline{P_\cH\cG}$ is one-to-one and, therefore, 
$\dim \overline{P_\cH\cG}\geq \dim\cG$, which contradicts  relation \eqref{GG}. Note that $x_0\in \cG\cap \cH^\perp$ and, due to Proposition \ref{equiv},  contradicts the fact that  $(W_1\otimes I_\cG,\ldots, W_n\otimes I_\cG)$ is a minimal dilation of $T$. Therefore, relation \eqref{GG} is false. Hence,
$$
\dim\cG\leq \dim \overline{P_\cH\cG} =\overline{ \Delta_{g^{-1}}(T,T^*)\cH}.
$$
On the other hand, the first part of the proof shows that  $\dim \overline{ \Delta_{g^{-1}}(T,T^*)\cH}\leq \dim\cG$.
The proof is complete.
\end{proof}

  According to Proposition \ref{minimal}, if the dilation is minimal then     the dilation index  is equal to $\dim \cG$. Is the converse true $?$    The next result shows that the answer is positive if  the dilation index   is finite. In addition,  we show how to get a minimal dilation from an arbitrary one.

\begin{proposition}  \label{many}  Let $T=(T_1,\ldots, T_n)\in \cD_{g^{-1}}(\cH)$ be a pure $n$-tuple with dilation index $d_T$  and  assume that
$$
T_i^*=(W_i^*\otimes I_\cG)|_\cH,\qquad i\in\{1,\ldots, n\},
$$
where $\cG$ is a Hilbert space and $\cH$ is a co-invariant subspace  under $W_1\otimes I_\cG,\ldots, W_n\otimes I_\cG$. 
If $d_T<\infty$, then the following statements hold.
\begin{enumerate}
\item[(i)] If $d_T<\dim \cG$, then there is a subspace $\cE\subset \cG$ such that
$$\dim(\cG\ominus \cE)=d_T, \quad F^2(H_n)\otimes \cE\subset \cH^\perp,
\quad 
 \text{ and  } \quad
T_i^*=(W_i^*\otimes I_{\cG\ominus \cE})|_\cH,\quad i\in\{1,\ldots, n\}.
$$
\item[(ii)]
$d_T=\dim\cG$ if and only if the dilation $(W_1\otimes I_\cG,\ldots, W_n\otimes I_\cG)$ is minimal.
\end{enumerate}
If $d_T=\infty$, then, taking $\cG_0:=\overline{(P_{\CC1}\otimes I_\cG) \cH}$, we have
$$\dim\cG_0=d_T, \quad F^2(H_n)\otimes (\cG\ominus \cG_0)\subset \cH^\perp,
\quad 
T_i^*=(W_i^*\otimes I_{\cG_0})|_\cH,\quad i\in\{1,\ldots, n\},
$$
and  $(W_1\otimes I_{\cG_0},\ldots, W_n\otimes I_{\cG_0})$ is a minimal dilation of $T$.
\end{proposition}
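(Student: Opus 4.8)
The plan is to handle both the finite and the infinite case by a single construction. Set $\cG_0:=\overline{(P_{\CC1}\otimes I_\cG)\cH}$ and show that compressing the ambient space from $F^2(H_n)\otimes\cG$ down to $F^2(H_n)\otimes\cG_0$ always produces a \emph{minimal} dilation of $T$, while the orthogonal complement $\cG\ominus\cG_0$ is exactly the part of $\cG$ that contributes nothing and may be discarded.

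First I would show that $\cH\subseteq F^2(H_n)\otimes\cG_0$. Since $\cH$ is co-invariant under $W_1\otimes I_\cG,\ldots,W_n\otimes I_\cG$, Proposition \ref{cyclic} (applied with $\cK=\cG$) gives
$$
\overline{\text{\rm span}}\,\{(W_\alpha\otimes I_\cG)\cH:\ \alpha\in\FF_n^+\}=F^2(H_n)\otimes\cG_0,
$$
and the left-hand side contains $\cH$ (take $\alpha=g_0$). Consequently $F^2(H_n)\otimes(\cG\ominus\cG_0)\subseteq\cH^\perp$. Because $F^2(H_n)\otimes\cG_0$ is reducing for each $W_i\otimes I_\cG$ and the restriction of $W_i\otimes I_\cG$ to that subspace is, under the natural identification, the operator $W_i\otimes I_{\cG_0}$, the coinvariance of $\cH$ inside $F^2(H_n)\otimes\cG_0$ yields $T_i^*=(W_i^*\otimes I_{\cG_0})|_\cH$ for every $i$. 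Finally, $\overline{(P_{\CC1}\otimes I_{\cG_0})\cH}=\cG_0$ holds by the very definition of $\cG_0$, so by the equivalence (iii)$\iff$(i) of Proposition \ref{equiv} the tuple $(W_1\otimes I_{\cG_0},\ldots,W_n\otimes I_{\cG_0})$ is a minimal dilation of $T$.

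With the minimal dilation on $\cG_0$ in hand, Proposition \ref{minimal} gives $\dim\cG_0=\dim\overline{\Delta_{g^{-1}}(T,T^*)\cH}=d_T$. This already proves the $d_T=\infty$ statement verbatim, since $\cG_0$ then has the asserted dimension and the displayed properties were all established above. For the finite case, part (i) follows by taking $\cE:=\cG\ominus\cG_0$: then $\cG\ominus\cE=\cG_0$ has dimension $d_T$, we have $F^2(H_n)\otimes\cE\subseteq\cH^\perp$, and $T_i^*=(W_i^*\otimes I_{\cG_0})|_\cH$, as shown. For part (ii), minimality of the dilation on $\cG$ is, by Proposition \ref{equiv}, equivalent to $\cG_0=\cG$; since $\dim\cG_0=d_T$ and $\dim\cG<\infty$, the equality $\cG_0=\cG$ holds if and only if $d_T=\dim\cG$.

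The only genuinely delicate point I anticipate is the dimension bookkeeping. One must invoke Proposition \ref{minimal} to know that the intrinsic number $\dim\overline{\Delta_{g^{-1}}(T,T^*)\cH}$ coincides with the dilation index $d_T$ for a minimal dilation, and one must use that a closed subspace and its ambient space of equal \emph{finite} dimension must coincide. Both the complement argument in (i) and the collapse $\cG_0=\cG$ in (ii) rely on this finiteness; in the infinite-dimensional case no such collapse is available, which is precisely why the uniform construction via $\cG_0$ is essential to settle that case directly rather than by a dimension count.
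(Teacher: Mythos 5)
Your proof is correct, and for the finite case it takes a genuinely different (and cleaner) route than the paper. The paper reserves the construction $\cG_0:=\overline{(P_{\CC1}\otimes I_\cG)\cH}$ for the case $d_T=\infty$; for part (i) it instead chooses a \emph{maximal} subspace $\cE\subset\cG$ contained in $\cH^\perp$ and argues by contradiction (via Propositions \ref{minimal} and \ref{equiv}) that $\dim(\cG\ominus\cE)=d_T$, and part (ii) is likewise proved by contradiction, producing a nonzero vector in $\cH^\perp\cap\cG$. You run the $\cG_0$-construction uniformly: Proposition \ref{cyclic} gives $\cH\subseteq F^2(H_n)\otimes\cG_0$, hence $F^2(H_n)\otimes(\cG\ominus\cG_0)\subseteq\cH^\perp$; the compression to the reducing subspace $F^2(H_n)\otimes\cG_0$ is then a dilation of $T$ which is minimal by Proposition \ref{equiv}, so $\dim\cG_0=d_T$ by Proposition \ref{minimal}. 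This buys you a canonical, explicit choice $\cE=\cG\ominus\cG_0$ in (i) with no maximality or contradiction argument, a two-line proof of (ii), and the infinite case for free, since it is literally the same statement; the paper's case-by-case treatment buys only that each individual step is elementary and self-contained. One small correction to your closing commentary: in your argument part (i) does \emph{not} actually rely on finiteness --- the identity $\dim(\cG\ominus\cE)=\dim\cG_0=d_T$ holds verbatim when $d_T=\infty$, which is precisely why your construction settles the last assertion as well; finiteness is needed only for the implication $d_T=\dim\cG\Rightarrow\cG_0=\cG$ in (ii), where it is available because $d_T<\infty$ is the standing hypothesis there.
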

\begin{proof}
Assume that $d_T<\infty$. If $d_T<\dim \cG$, then Proposition \ref{minimal} shows that  $(W_1\otimes I_\cG,\ldots, W_n\otimes I_\cG)$ is not  a minimal dilation of $T$ and, due to Proposition \ref{equiv}, $\cH^\perp\cap \cG\neq \{0\}$.
Let $\cE$ be a maximal subspace of $\cG$ such that $\cE\subset \cH^\perp$. Since $\cH^\perp$ is invariant under $W_1\otimes I_\cG,\ldots, W_n\otimes I_\cG$, we have $F^2(H_n)\otimes \cE\subset \cH^\perp$. 
Consequently,
$$
T_i^*=(W_i^*\otimes I_{\cG\ominus \cE})|_\cH,\qquad i\in\{1,\ldots, n\}.
$$
If $d_T<\dim(\cG\ominus \cE)$, then, as above, we prove that $\cH^\perp \cap (\cG\ominus \cE)\neq \{0\}$ and, consequently, we can find a subspace $\cE'\supset \cE$, $\cE'\neq \cE$, such that $\cE'\subset \cH^\perp$, which contradicts the maximality of $\cE$. Therefore, we must have $\dim(\cG\ominus \cE)=d_T$. This proves item (i).

To prove item (ii), assume that $d_T=\dim\cG$. Then one can prove that $\cH^\perp \cap \cG=\{0\}$. 
Indeed, if we assume, by contradiction,  that  there is a non-zero vector $x\in \cH^\perp \cap \cG$, then $F^2(H_n)\otimes \CC x\subset \cH^\perp$ and $T_i^*=(W_i^*\otimes I_{\cG\ominus \CC x})|_\cH$ for $i\in\{1,\ldots, n\}$. Due to Proposition \ref{minimal}, we have $d_T\leq \dim \cG-1$, which is a contradiction. Now, using Proposition \ref{equiv}, we have  $\overline{(P_{\CC1}\otimes I_\cG) \cH}=\cG$.
Conversely, assume that $\overline{(P_{\CC1}\otimes I_\cG) \cH}=\cG$. Due to Corollary \ref{equiv}, $(W_1\otimes I_{\cG},\ldots, W_n\otimes I_{\cG})$ is a minimal dilation of $T$. Using Proposition \ref{minimal}, we deduce that
$d_T=\dim \cG$. This proves item (ii).

To prove the last part of this proposition, assume that $d_T=\infty$ and take  $\cG_0:=\overline{(P_{\CC1}\otimes I_\cG) \cH}$. Due to Theorem \ref{cyclic}, we have
\begin{equation}\label{span}
\overline{\text{\rm span}}\,\left\{\left(W_\alpha\otimes
I_\cG\right)\cH:\ \alpha\in \FF_n^+\right\}=F^2(H_n)\otimes \cG_0.
\end{equation}
Since $\cG\ominus \cG_0\subset \cH^\perp$ and $\cH^\perp$ is an  invariant subspace under the operators 
$W_1\otimes I_\cG,\ldots, W_n\otimes I_\cG$, we have 
$F^2(H_n)\otimes (\cG\ominus \cG_0)\subset \cH^\perp$ and 
$T_i^*=(W_i^*\otimes I_{\cG_0})|_\cH$ for all  $i\in\{1,\ldots, n\}$.
Note that relation \eqref{span} shows that  $(W_1\otimes I_{\cG_0},\ldots, W_n\otimes I_{\cG_0})$ is a minimal dilation of $T$. Using Proposition \ref{minimal}, we conclude that $\dim \cG_0=d_T$.
The proof is complete.
\end{proof}

We say that two $n$-tuples of operators $(T_1,\ldots, T_n)$, \
$T_i\in B(\cH)$, and $(T_1',\ldots, T_n')$, \ $T_i'\in B(\cH')$, are
unitarily equivalent if there exists a unitary operator $U:\cH\to
\cH'$ such that
$T_i=U^* T_i' U$   for  any  $ i=1,\ldots, n.
$
According to Theorem \ref{model}, if $T$ is  a pure $n$-tuple,  then  $(W_1\otimes I_\cD,\ldots, W_n\otimes I_\cD)$ is a dilation of $T$ implemented  by the 
 Berezin kernel  $K_{g,T}$, i.e. 
 \begin{equation}
 \label{dil-canonic}
 K_{g,T} T_i^*=(W_i^*\otimes I_\cD)K_{g,T},\qquad i\in \{1,\ldots, n\}.
 \end{equation}
 We call this dilation the {\it canonical dilation}  of the  pure $n$-tuple $T$.  One can prove that the canonical dilation is minimal.
 Indeed, note that  
$$
(P_{\CC1} \otimes I_\cD)K_{g,T}h=\lim_{m\to \infty}\sum_{k=0}^m \sum_{|\alpha|=k}
P_{\CC1}  e_\alpha\otimes \Delta_{g^{-1}}(T,T^*)
T_\alpha^*h,\quad h\in \cH,
$$
where $\cD:=\overline{\Delta_{g^{-1}}(T,T^*) \cH}$,
 and, consequently, $\overline{(P_{\CC1} \otimes I_\cD)K_{g,T}\cH}=\cD$.  Using now
Proposition \ref{cyclic} in the particular case when $\cM:=K_{g,T}\cH$,
 we deduce that   
\begin{equation*}
F^2(H_n)\otimes \cD=\bigvee_{\alpha\in \FF_n^+}
(W_\alpha\otimes I_{\cD}) K_{g,T}\cH,
\end{equation*}
which proves the minimality of the canonical dilation of $T$.
\begin{theorem}  \label{Uni}
 Let $T=(T_1,\ldots, T_n)\in \cD_{g^{-1}}(\cH)$ and   $T'=(T'_1,\ldots, T'_n)\in \cD_{g^{-1}}(\cH')$ be  pure $n$-tuples and let  $(W_1\otimes I_\cD,\ldots, W_n\otimes I_\cD)$  and  $(W_1\otimes I_{\cD'},\ldots, W_n\otimes I_{\cD'})$ be  the canonical   dilations of $T$ and $T'$, respectively. Then $T$ is unitarily equivalent to $T'$ if and only if there is a unitary operator $U:\cD\to \cD'$ such that 
$(I\otimes U) \cH=\cH' $and 
$$
(I\otimes U) (W_i\otimes I_\cD)=(W_i\otimes I_{\cD'}) (I\otimes U),\qquad i\in \{1,\ldots, n\}.
$$
\end{theorem}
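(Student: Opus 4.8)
The plan is to reduce the statement to the canonical-dilation picture and to exploit the fact that the intertwining identity in the conclusion is automatic. Indeed, for \emph{any} unitary $U:\cD\to\cD'$ one has $(I\otimes U)(W_i\otimes I_\cD)=W_i\otimes U=(W_i\otimes I_{\cD'})(I\otimes U)$, and likewise with $W_i^*$ in place of $W_i$, so the real content of the theorem is the equivalence between unitary equivalence of $T$ and $T'$ and the existence of a unitary $U:\cD\to\cD'$ carrying $\cM:=K_{g,T}\cH$ onto $\cM':=K_{g,T'}\cH'$. Throughout I identify $\cH$ with the coinvariant subspace $\cM$ via the isometry $K_{g,T}$ and $\cH'$ with $\cM'$ via $K_{g,T'}$, recalling from Theorem~\ref{model} that $K_{g,T}T_i^*=(W_i^*\otimes I_\cD)K_{g,T}$ and the analogous relation for $T'$.

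For the \emph{if} direction, given such a $U$ I would set $V:=K_{g,T'}^*(I\otimes U)K_{g,T}:\cH\to\cH'$. Since $K_{g,T}$ is an isometry onto $\cM$, since $I\otimes U$ is a unitary taking $\cM$ onto $\cM'$, and since the restriction of $K_{g,T'}^*$ to $\cM'$ inverts the isometry $K_{g,T'}$, the operator $V$ is unitary. To check $VT_i^*=T_i'^*V$ I would substitute the Berezin intertwining relation for $T$, push $I\otimes U$ through $W_i^*\otimes I_\cD$ using the automatic intertwining noted above, and then apply the compression identity $K_{g,T'}^*(W_i^*\otimes I_{\cD'})z=T_i'^*K_{g,T'}^*z$ valid for $z\in\cM'$, which follows from $K_{g,T'}^*K_{g,T'}=I$ together with the Berezin relation for $T'$. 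This yields $VT_i^*=T_i'^*V$, i.e. $T$ is unitarily equivalent to $T'$.

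For the \emph{only if} direction, suppose $X:\cH\to\cH'$ is unitary with $XT_iX^*=T_i'$. The key step is to transport the defect operator: since $XT_\alpha T_\alpha^*X^*=T_\alpha'T_\alpha'^*$ and $XX^*=I_{\cH'}$, summing the defining series gives $X\Delta_{g^{-1}}(T,T^*)X^*=\Delta_{g^{-1}}(T',T'^*)$. By the continuous functional calculus this upgrades to $X\Delta_{g^{-1}}(T,T^*)^{1/2}=\Delta_{g^{-1}}(T',T'^*)^{1/2}X$, so $X$ maps $\cD=\overline{\Delta_{g^{-1}}(T,T^*)(\cH)}=\overline{\Delta_{g^{-1}}(T,T^*)^{1/2}\cH}$ onto $\cD'=\overline{\Delta_{g^{-1}}(T',T'^*)^{1/2}\cH'}$. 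I then define $U:=X|_\cD:\cD\to\cD'$, a unitary. A direct computation on the Fourier expansion of the Berezin kernel, using $U\Delta_{g^{-1}}(T,T^*)^{1/2}T_\alpha^*h=\Delta_{g^{-1}}(T',T'^*)^{1/2}T_\alpha'^*(Xh)$, gives $(I\otimes U)K_{g,T}=K_{g,T'}X$, whence $(I\otimes U)\cM=K_{g,T'}(X\cH)=\cM'$, as required.

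The main obstacle is the converse direction, specifically the verification that $U:=X|_\cD$ is a well-defined unitary from $\cD$ onto $\cD'$; this rests entirely on the square-root intertwining $X\Delta_{g^{-1}}(T,T^*)^{1/2}=\Delta_{g^{-1}}(T',T'^*)^{1/2}X$, which in turn comes from the clean conjugation identity for the defect operators. Once this is in place, both the Berezin-kernel computation above and the unitarity of $V$ in the forward direction are routine. Conceptually, the crux is that any unitary intertwining $T$ and $T'$ automatically carries the defect space $\cD$ to $\cD'$, so that the canonical dilation is rigidly determined by $T$ up to the expected ampliation unitary.
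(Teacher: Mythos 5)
Your proposal is correct and follows essentially the same route as the paper's own proof: the substantive (only if) direction is identical, transporting the defect operator through the intertwining unitary $X$ to obtain $U:=X|_{\cD}$ via $X\Delta_{g^{-1}}(T,T^*)X^*=\Delta_{g^{-1}}(T',T'^*)$, and then verifying $(I\otimes U)K_{g,T}=K_{g,T'}X$ on the Fourier expansion of the Berezin kernel. Your write-up is in fact slightly more careful than the paper's, since you make explicit the square-root intertwining $X\Delta_{g^{-1}}(T,T^*)^{1/2}=\Delta_{g^{-1}}(T',T'^*)^{1/2}X$ (which the paper uses silently, with a resulting typo in its displayed computation) and you spell out the routine converse direction that the paper omits.
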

\begin{proof} Assume that $V:\cH\to \cH'$ is a unitary operator such that $VT_i=T_i'V$ for $i\in \{1,\ldots, n\}$.
Then, we have
$V \Delta_{g^{-1}}(T,T^*)= \Delta_{g^{-1}}(T',T^{\prime *})V$
which induces a unitary operator 
$$
U:\overline{ \Delta_{g^{-1}}(T,T^*)\cH}\to \overline{ \Delta_{g^{-1}}(T',T^{\prime*})\cH'}
$$
Now, we show that the unitary operator  $I_{F^2(H_n)}\otimes U:=F^2(H_n)\otimes \overline{ \Delta_{g^{-1}}(T,T^*)\cH} \to F^2(H_n)\otimes \overline{ \Delta_{g^{-1}}(T',T^{\prime*})\cH'}$ is an extension of $V$ through the noncommutative Berezin  kernels which isometrically identify  $\cH$ with $K_{g,T} \cH$, and $\cH'$ with 
$K_{g,T'} \cH'$. Indeed,  using the definition of the Berezin  kernel,  we have
\begin{equation*}
\begin{split}
(I\otimes U) K_{g,T} h&=(I\otimes U)\left(\sum_{\alpha\in \FF_n^+} \sqrt{b_\alpha}
e_\alpha\otimes \Delta_{g^{-1}}(T,T^*)^{1/2} T_\alpha^* h\right)
=\sum_{\alpha\in \FF_n^+} \sqrt{b_\alpha}
e_\alpha\otimes V\Delta_{g^{-1}}(T,T^*)^{1/2} T_\alpha^* h\\
&=\sum_{\alpha\in \FF_n^+} \sqrt{b_\alpha}
e_\alpha\otimes \Delta_{g^{-1}}(T,T^*)^{1/2} VT_\alpha^* h
=\sum_{\alpha\in \FF_n^+} \sqrt{b_\alpha}
e_\alpha\otimes \Delta_{g^{-1}}(T,T^*)^{1/2} T_\alpha^{\prime*}V h\\
&=K_{g,T'} Vh
\end{split}
\end{equation*}
for any $h\in \cH$. This completes the proof.
\end{proof}

We denote by $C^*({\bf W})$ the $C^*$-algebra generated by $W_1,\ldots W_n$ and the  identity. 
We also consider the lexicographic order on the unital free semigroup   $\FF_n^+$. The next two results we proved in  \cite{Po-Noncommutative domains} and are needed in what follows.

\begin{theorem}  \label{irreducible} Let $g= 1+\sum_{ |\alpha|\geq 1} b_\alpha
Z_\alpha$ be a free holomorphic function in a neighborhood of the origin such that  $b_\alpha>0$ and 
$$
\sup_{\alpha\in \FF_n^+} \frac{b_\alpha}{b_{g_i \alpha}}<\infty,\quad \text{ for every }\ i\in \{1,\ldots, n\},
$$ 
and 
let   ${\bf W}=(W_1,\ldots, W_n)$ be  the   weighted left creation operators associated with $g$.
Then  the following statements hold.
\begin{enumerate}
\item[(i)] The $C^*$-algebra $C^*({\bf W})$ is irreducible.
\item[(ii)] The $n$-tuples $(W_1\otimes I_\cH,\ldots,
W_n\otimes I_\cH)$ and  $(W_1\otimes I_{\cH'},\ldots, W_n\otimes
I_{\cH'})$ 
  are unitarily equivalent if and only if   $\dim \cH=\dim \cH'$.
\item[(iii)] If  there is $i\in \{1,\ldots, n\}$ such that
$$
\lim_{\gamma\to \infty} \left(\frac{b_{g_p\gamma}}{b_{g_ig_p\gamma}}-\frac{b_\gamma}{b_{g_p\gamma}}\right) =0\quad  \text{for any} \quad  p\in \{1,\ldots, n\},
$$
then  the $C^*$-algebra $C^*({\bf W})$ contains all the compact operators in $B(F^2(H_n))$.  \end{enumerate}
\end{theorem}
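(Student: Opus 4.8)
The plan is to treat the three parts separately, basing (i) and (ii) on two elementary structural facts about the weighted shifts and reserving the real work for (iii). The facts I would record first are: since $W_i e_\alpha=\sqrt{b_\alpha/b_{g_i\alpha}}\,e_{g_i\alpha}$, a direct induction gives $W_\alpha 1 = b_\alpha^{-1/2}e_\alpha$ for every $\alpha\in\FF_n^+$, so the vacuum vector $1=e_{g_0}$ is cyclic for the family $\{W_\alpha\}_{\alpha\in\FF_n^+}$; and since $W_i^*e_\beta=0$ unless $\beta$ begins with $g_i$, one has $\bigcap_{i=1}^n\ker W_i^*=\CC 1$.

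For part (i) I would show that the commutant of $C^*(\mathbf W)$ is trivial. Let $T$ commute with every $W_i$ and every $W_i^*$. Commuting with the $W_i^*$ forces $T$ to map $\bigcap_i\ker W_i^*=\CC 1$ into itself, so $T1=\lambda 1$ for some $\lambda\in\CC$. Using cyclicity together with $TW_\alpha=W_\alpha T$, I get $Te_\alpha=\sqrt{b_\alpha}\,TW_\alpha 1=\sqrt{b_\alpha}\,W_\alpha T1=\lambda e_\alpha$ for all $\alpha$, hence $T=\lambda I$. Thus $C^*(\mathbf W)$ acts irreducibly.

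For part (ii) only the forward direction needs argument. A unitary $U$ implementing the equivalence of the ampliations satisfies $U(W_i\otimes I_\cH)=(W_i\otimes I_{\cH'})U$, and taking adjoints $U(W_i^*\otimes I_\cH)=(W_i^*\otimes I_{\cH'})U$. Consequently $U$ carries $\bigcap_i\ker(W_i^*\otimes I_\cH)$ onto $\bigcap_i\ker(W_i^*\otimes I_{\cH'})$; by the second structural fact these subspaces are $\CC 1\otimes\cH$ and $\CC 1\otimes\cH'$, so $\dim\cH=\dim\cH'$. The converse is immediate by taking $U=I\otimes V$ for any unitary $V:\cH\to\cH'$.

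For part (iii) the strategy is to exhibit a single nonzero compact operator inside $C^*(\mathbf W)$ and then invoke the classical fact that an irreducible $C^*$-algebra (part (i)) containing a nonzero compact operator must contain all compact operators. The candidate is the self-adjoint element $E:=W_i^*W_i-\sum_{q=1}^n W_qW_q^*\in C^*(\mathbf W)$. A short computation shows $E$ is diagonal in the basis $\{e_\beta\}$: it acts on $1$ by the scalar $1/b_{g_i}$, and on $e_{g_p\gamma}$ by $\tfrac{b_{g_p\gamma}}{b_{g_ig_p\gamma}}-\tfrac{b_\gamma}{b_{g_p\gamma}}$. The hypothesis of (iii) says exactly that, for each fixed $p$, these scalars tend to $0$ as $\gamma\to\infty$ along the lexicographic order; since there are only finitely many $p$, for every $\varepsilon>0$ all but finitely many diagonal entries of $E$ have modulus below $\varepsilon$, so $E$ is compact, and since its entry on $1$ equals $1/b_{g_i}>0$ it is nonzero. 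The main obstacle is precisely this identification: recognizing that the defect operator $W_i^*W_i-\sum_q W_qW_q^*$ is the right object and that its diagonal is engineered by the hypothesis to decay. Parts (i) and (ii) are then routine consequences of the two structural facts, the only subtlety being the adjoint-intertwining step in (ii).
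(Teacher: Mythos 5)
The paper does not actually prove Theorem \ref{irreducible}: it is recalled without proof from \cite{Po-Noncommutative domains} (``The next two results we proved in \cite{Po-Noncommutative domains}\dots''), so there is no in-paper argument to compare yours against; your proof is correct and self-contained. The key computations all check out: $W_\alpha 1=b_\alpha^{-1/2}e_\alpha$ and $\bigcap_{i=1}^n\ker W_i^*=\CC 1$ give (i) via triviality of the commutant and (ii) via the adjoint-intertwining/kernel argument, and for (iii) the element $E=W_i^*W_i-\sum_{q=1}^n W_qW_q^*$ is indeed diagonal with entry $1/b_{g_i}>0$ at the vacuum and entry $\frac{b_{g_p\gamma}}{b_{g_ig_p\gamma}}-\frac{b_\gamma}{b_{g_p\gamma}}$ at $e_{g_p\gamma}$, so the hypothesis of (iii) (with the length-lexicographic enumeration of $\FF_n^+$, which is the standard meaning of $\gamma\to\infty$ here) says exactly that these entries vanish at infinity, making $E$ a nonzero compact element of the irreducible algebra $C^*({\bf W})$, which therefore contains all compact operators on $F^2(H_n)$. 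This is almost certainly the argument of the cited paper as well, since the hypothesis of (iii) is visibly tailored to the diagonal of precisely this operator.
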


We remark that Theorem \ref{irreducible} holds, in particular, for  the admissible free holomorphic functions   provided by Example \ref{Bergman} and Example \ref{Dirichlet}.

\begin{theorem} \label{compact}  Let $g= 1+\sum_{ |\alpha|\geq 1} b_\alpha
Z_\alpha$ be a free holomorphic function in a neighborhood of the origin such that  $b_\alpha>0$ and 
$$
\sup_{\alpha\in \FF_n^+} \frac{b_\alpha}{b_{g_i \alpha}}<\infty,\quad \text{ for every }\ i\in \{1,\ldots, n\},
$$ 
and 
let  ${\bf W}=(W_1,\ldots, W_n)$ be  the  weighted left creation operators associated with $g$.
  Assume that   the series $\sum_{k=0}^\infty\sum_{|\alpha|=k} a_\alpha W_\alpha W_\alpha^*$ is convergent in the operator norm topology, where $g^{-1}=1+\sum_{|\alpha|\geq 1} a_\alpha Z_\alpha$. Then  the following statements hold.
  \begin{enumerate}
  \item[(i)]
$\boldsymbol\cK(F^2(H_n))\subset \overline{\text{\rm span}}\{W_\alpha W_\beta^*:\ \alpha,\beta\in \FF_n^+\},
$
 where  $\boldsymbol\cK(F^2(H_n))$ stands for the ideal of all compact
operators  in $B(F^2(H_n))$. 
\item[(ii)] There is a unique minimal nontrivial  two-sided ideal  $\cJ$ of $C^*({\bf W})$. Moreover, 
$\cJ$ coincides  with $\boldsymbol\cK(F^2(H_n))$.

\item[(iii)] If, in addition,
$$
\lim_{\gamma \to \infty} \left(\frac{b_{g_p\gamma}}{b_{g_ig_p\gamma}}-\frac{b_\gamma}{b_{g_p\gamma}}\right) =0
$$
for every $i,p\in \{1,\ldots, n\}$,  then 
$$
C^*({\bf W})=\overline{\text{\rm span}}\{W_\alpha W_\beta^*:\ \alpha,\beta\in \FF_n^+\}.
$$
\end{enumerate}
\end{theorem}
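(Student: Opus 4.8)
The plan is to set $\cS:=\overline{\Span}\{W_\alpha W_\beta^*:\alpha,\beta\in\FF_n^+\}$ and to exploit that the norm–convergence hypothesis makes $P_{\CC1}=\Delta_{g^{-1}}({\bf W},{\bf W}^*)$ (Proposition \ref{W}) an element of $\cS$. Throughout, let $R_{u,v}$ denote the rank-one operator $x\mapsto\langle x,v\rangle u$. For \emph{(i)}, I would first record the elementary identity $W_\alpha e_{g_0}=b_\alpha^{-1/2}e_\alpha$, from which $W_\alpha P_{\CC1}W_\beta^*=(b_\alpha b_\beta)^{-1/2}R_{e_\alpha,e_\beta}$. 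Since $P_{\CC1}\in\cS$ and $\cS$ is invariant under $X\mapsto W_\alpha XW_\beta^*$ (because $W_\alpha W_\mu W_\nu^*W_\beta^*=W_{\alpha\mu}W_{\beta\nu}^*$ and this map is bounded), every matrix unit $R_{e_\alpha,e_\beta}$ lies in $\cS$; their closed span is exactly $\boldsymbol\cK(F^2(H_n))$, which gives (i).

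For \emph{(ii)}, by (i) we have that $\boldsymbol\cK(F^2(H_n))\subseteq\cS\subseteq C^*({\bf W})$ is a nonzero proper closed two-sided ideal (proper since $I$ is noncompact). Let $\cI$ be any nonzero closed two-sided ideal and choose $0\neq T\in\cI$ and $\eta$ with $\xi:=T\eta\neq0$. Then for all $u,v$ the sandwich $R_{u,\xi}\,T\,R_{\eta,v}=\|\xi\|^{2}R_{u,v}$ belongs to $\cI$, because the rank-one factors are compact, hence in $C^*({\bf W})$ by (i), and $\cI$ is an ideal. Thus $\cI$ contains every rank-one operator, hence all of $\boldsymbol\cK(F^2(H_n))$; so $\boldsymbol\cK(F^2(H_n))$ sits inside every nonzero closed ideal and is the unique minimal one. (Theorem \ref{irreducible}(i) is consistent with this, as irreducibility is exactly what allows such sandwiches to exhaust the rank-ones.)

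For \emph{(iii)}, the goal is $\cS=C^*({\bf W})$; as $\cS$ is self-adjoint, norm-closed, and contains $I=W_{g_0}W_{g_0}^*$ and each $W_i=W_{g_i}W_{g_0}^*$, it suffices to show $\cS$ is closed under multiplication, i.e. $W_\alpha W_\beta^*W_\gamma W_\delta^*\in\cS$, the general case following by continuity. The decisive computation — the one place where hypothesis (iii) enters — is that $W_i^*W_j=\delta_{ij}Q_i$ with $Q_i:=W_i^*W_i$ diagonal, $Q_ie_\alpha=\tfrac{b_\alpha}{b_{g_i\alpha}}e_\alpha$, and that
$$Q_i-\sum_{j=1}^n W_jW_j^*$$
is diagonal with entries $\tfrac{b_{g_j\delta}}{b_{g_ig_j\delta}}-\tfrac{b_\delta}{b_{g_j\delta}}$ on $e_{g_j\delta}$, which tend to $0$ as $|\delta|\to\infty$ precisely by (iii); hence this difference is compact. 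Passing to the Calkin algebra $\mathcal{Q}=B(F^2(H_n))/\boldsymbol\cK(F^2(H_n))$ with quotient $\pi$ and $w_i:=\pi(W_i)$, this reads $w_i^*w_j=\delta_{ij}P'$ with $P':=\sum_k w_kw_k^*$ independent of $i$; one then checks that $P'$ is central and that $\pi(W_\beta^*W_\beta)=P'^{\,|\beta|}=\sum_{|\sigma|=|\beta|}w_\sigma w_\sigma^*$.

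Next I would establish that $\cB:=\overline{\Span}\{w_\alpha w_\beta^*\}$ is a $C^*$-subalgebra of $\mathcal{Q}$: the relations force $w_\beta^*w_\gamma$ to vanish unless $\beta,\gamma$ are comparable, with $w_\beta^*w_\gamma=P'^{\,|\beta|}w_{\gamma'}$ when $\gamma=\beta\gamma'$ (and the adjoint form otherwise), while an induction on $|\mu|$ via $P'w_\mu w_\nu^*=w_j\bigl(P'w_{\mu'}w_\nu^*\bigr)$ for $\mu=g_j\mu'$ (base case $P'w_\nu^*=\sum_k w_kw_{\nu g_k}^*$) shows $P'^{\,p}w_\mu w_\nu^*\in\cB$; together these give $w_\alpha w_\beta^*w_\gamma w_\delta^*\in\cB$. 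Finally I lift: the Calkin image of $W_\alpha W_\beta^*W_\gamma W_\delta^*$ is a norm-limit of $\pi(t_N)$ with $t_N\in\Span\{W_\mu W_\nu^*\}\subseteq\cS$, and since $\ker(\pi|_\cS)=\boldsymbol\cK(F^2(H_n))\subseteq\cS$ by (i), closedness of $\cS$ forces $W_\alpha W_\beta^*W_\gamma W_\delta^*\in\cS$, so $\cS=C^*({\bf W})$. The hard part is the asymptotic step: checking that $Q_i-\sum_jW_jW_j^*$ is compact is exactly the role of condition (iii), and the compactness supplied by (i), namely $\boldsymbol\cK\subseteq\cS$, is what lets me absorb all the compact errors when transporting the purely algebraic closure of $\cB$ in $\mathcal{Q}$ back to $\cS$.
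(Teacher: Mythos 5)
Your proof is correct. Note that this paper does not actually contain a proof of Theorem \ref{compact} — it is recalled verbatim from \cite{Po-Noncommutative domains} — so there is no internal argument to compare against; but your route is precisely the expected one: for (i)--(ii), the identity $W_\alpha P_{\CC 1}W_\beta^*=(b_\alpha b_\beta)^{-1/2}R_{e_\alpha,e_\beta}$ together with $P_{\CC 1}=\Delta_{g^{-1}}({\bf W},{\bf W}^*)\in\cS$ (norm convergence plus Proposition \ref{W}) is exactly the mechanism the author invokes elsewhere when he ``uses the proof of Theorem \ref{compact}'' (see the computation of $\text{\rm range}\,\tilde\pi(P_{\CC 1})$ in the proof of Theorem \ref{dilation}), and for (iii) the passage to the Calkin algebra, where condition (iii) makes $W_i^*W_j-\delta_{ij}\sum_k W_kW_k^*$ compact and yields the relations $w_i^*w_j=\delta_{ij}P'$ with $P'$ central, is the standard Popescu argument. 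The only cosmetic omissions are the diagonal entry $1/b_{g_i}$ of $Q_i-\sum_j W_jW_j^*$ at $e_{g_0}$ (a rank-one term, harmless for compactness) and a word on why entries tending to $0$ as $|\delta|\to\infty$ suffices for compactness (finitely many words of each length); neither affects the argument.
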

 
 We remark that Theorem \ref{compact} holds, in particular, for  the admissible free holomorphic functions of Example \ref{Bergman}.

\begin{theorem} \label{unique-minimal}
Let  $g= 1+\sum_{ |\alpha|\geq 1} b_\alpha
Z_\alpha$ be  an admissible free holomorphic function such that the following conditions hold.
\begin{enumerate}
\item[(i)] For any $i,p\in \{1,\ldots, p\}$, 
$$
\lim_{\gamma\to \infty} \left(\frac{b_{g_p\gamma}}{b_{g_ig_p\gamma}}-\frac{b_\gamma}{b_{g_p\gamma}}\right) =0.
$$
\item[(ii)]   The series  $\sum_{|\alpha|\geq 1} a_\alpha W_\alpha W_\alpha^*$ is convergent in the operator norm, where $g^{-1}=1+\sum_{|\alpha|\geq 1} a_\alpha Z_\alpha$.

\end{enumerate}
Then any pure $n$-tuple in $\cD_{g^{-1}}(\cH)$ has a unique minimal dilation up to an isomorphism.
\end{theorem}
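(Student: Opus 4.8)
The plan is to prove uniqueness by producing, from the intrinsic data of the pure $n$-tuple $T=(T_1,\dots,T_n)$, a unitary that intertwines any two minimal dilations and fixes $\cH$. Existence is already in hand: by Theorem \ref{model} the canonical Berezin dilation $K_{g,T}T_i^*=(W_i^*\otimes I_\cD)K_{g,T}$ exists and, as noted after Theorem \ref{Uni}, is minimal, while Proposition \ref{many} reduces any dilation to a minimal one. So suppose we are given two minimal dilations, encoded by isometries $V_1:\cH\to F^2(H_n)\otimes\cG_1$ and $V_2:\cH\to F^2(H_n)\otimes\cG_2$ with co-invariant ranges satisfying $V_jT_i^*=(W_i^*\otimes I_{\cG_j})V_j$ for $i\in\{1,\dots,n\}$ and $j=1,2$. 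The goal is a unitary $\Phi:F^2(H_n)\otimes\cG_1\to F^2(H_n)\otimes\cG_2$ with $\Phi V_1=V_2$ and $\Phi(W_i\otimes I_{\cG_1})=(W_i\otimes I_{\cG_2})\Phi$.

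The heart of the argument is to show that the two compression maps $X\mapsto V_j^*(X\otimes I_{\cG_j})V_j$ on $C^*({\bf W})$ coincide. First I would treat the right-ordered monomials $W_\gamma W_\delta^*$. Iterating $V_jT_i^*=(W_i^*\otimes I_{\cG_j})V_j$ gives $(W_\delta^*\otimes I_{\cG_j})V_j=V_jT_\delta^*$ and, by adjoints, $V_j^*(W_\gamma\otimes I_{\cG_j})=T_\gamma V_j^*$; since $V_j$ is isometric, $V_j^*V_j=I_\cH$, whence
$$
V_j^*(W_\gamma W_\delta^*\otimes I_{\cG_j})V_j=T_\gamma V_j^*V_jT_\delta^*=T_\gamma T_\delta^*,
$$
a quantity independent of $j$. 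This is where the hypotheses enter: conditions (i) and (ii) are exactly what Theorem \ref{compact} requires to conclude $C^*({\bf W})=\overline{\text{\rm span}}\{W_\alpha W_\beta^*:\alpha,\beta\in\FF_n^+\}$. Because each map $X\mapsto V_j^*(X\otimes I_{\cG_j})V_j$ is completely positive, hence contractive and norm-continuous, agreement on the total set $\{W_\gamma W_\delta^*\}$ propagates to all of $C^*({\bf W})$:
$$
V_1^*(X\otimes I_{\cG_1})V_1=V_2^*(X\otimes I_{\cG_2})V_2,\qquad X\in C^*({\bf W}).
$$

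With this in place I would define $\Phi$ on the (dense, by minimality) linear span of $\{(W_\alpha\otimes I_{\cG_1})V_1h:\alpha\in\FF_n^+,\ h\in\cH\}$ by
$$
\Phi\big((W_\alpha\otimes I_{\cG_1})V_1h\big):=(W_\alpha\otimes I_{\cG_2})V_2h.
$$
To check this is well-defined and isometric, I compute inner products: for $\alpha,\beta\in\FF_n^+$ and $h,k\in\cH$,
$$
\big\langle(W_\alpha\otimes I_{\cG_1})V_1h,(W_\beta\otimes I_{\cG_1})V_1k\big\rangle=\big\langle V_1^*(W_\beta^*W_\alpha\otimes I_{\cG_1})V_1h,k\big\rangle,
$$
and since $W_\beta^*W_\alpha\in C^*({\bf W})$, the displayed equality of compressions shows this equals the corresponding expression with $V_2$ and $\cG_2$. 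Thus $\Phi$ is a well-defined isometry on a dense subspace; minimality of the second dilation makes its range dense, so $\Phi$ extends to a unitary $F^2(H_n)\otimes\cG_1\to F^2(H_n)\otimes\cG_2$. Taking $\alpha=g_0$ yields $\Phi V_1=V_2$, and the defining relation gives $\Phi(W_i\otimes I_{\cG_1})=(W_i\otimes I_{\cG_2})\Phi$, so the two minimal dilations are isomorphic; in particular $\dim\cG_1=\dim\cG_2$, consistent with Proposition \ref{minimal}.

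The main obstacle is precisely the passage from the right-ordered monomials, where the compression is transparently $T_\gamma T_\delta^*$, to the wrong-ordered products $W_\beta^*W_\alpha$ that appear in the inner-product computation. There is no elementary formula for $V_j^*(W_\beta^*W_\alpha\otimes I_{\cG_j})V_j$ in terms of $T$ alone; the only route I see is to realize $W_\beta^*W_\alpha$ as a norm-limit of linear combinations of $W_\gamma W_\delta^*$, which is guaranteed by the spanning description of $C^*({\bf W})$ in Theorem \ref{compact}(iii). This is the unique place where both standing hypotheses (i) and (ii) are indispensable: were the spanning property to fail, one could not conclude that the compressions agree on the full $C^*$-algebra, and uniqueness of the minimal dilation would not follow by this method.
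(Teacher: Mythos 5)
Your proof is correct, and it turns on the same pivot as the paper's: hypotheses (i) and (ii) enter only through Theorem \ref{compact}(iii), which gives $C^*({\bf W})=\overline{\Span}\{W_\alpha W_\beta^*:\alpha,\beta\in\FF_n^+\}$, and since every minimal dilation compresses $W_\gamma W_\delta^*\otimes I$ to $T_\gamma T_\delta^*$, the compression of the whole $C^*$-algebra is independent of the dilation. Where you genuinely diverge is in how this fact is converted into an intertwining unitary. The paper compares a given minimal dilation with the canonical Berezin dilation, regards the two ampliations $X\mapsto X\otimes I$ as minimal Stinespring dilations of the common completely positive map $\Psi(W_\alpha W_\beta^*)=T_\alpha T_\beta^*$, quotes the uniqueness of minimal Stinespring dilations \cite{St} to obtain a unitary $U$ with $UK_{g,T}=V$, and then invokes irreducibility of $C^*({\bf W})$ (Theorem \ref{irreducible}) to upgrade this to $U=I\otimes A$, whence $\dim\cD=\dim\cG$. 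You instead compare two arbitrary minimal dilations symmetrically and construct the unitary by hand on the dense span of $\{(W_\alpha\otimes I_{\cG_1})V_1h\}$; your inner-product identity is precisely the proof of Stinespring uniqueness, unpacked for this situation. What your route buys: it is self-contained (no appeal to \cite{St} or to Theorem \ref{irreducible}) and it yields $\Phi V_1=V_2$ on the nose rather than only $U K_{g,T}\cH=V\cH$. What the paper's route buys: the structural conclusion that the intertwiner is itself an ampliation $I\otimes A$, which immediately identifies the multiplicity spaces. On that last point, note that in your version the equality $\dim\cG_1=\dim\cG_2$ does \emph{not} follow from the bare existence of the unitary $\Phi$ when the multiplicity spaces are infinite-dimensional, so your citation of Proposition \ref{minimal} there is doing real work (both dimensions equal the dilation index $d_T$), not merely recording a consistency check.
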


\begin{proof}
Let $(W_1\otimes I_\cG,\ldots, W_n\otimes I_\cG)$  be a  minimal dilation dilation of $T$, i.e.
 \begin{equation}
\label{another} 
VT_i^*=(W_i^*\otimes I_\cG)V,\qquad i\in\{1,\ldots, n\},
\end{equation}
where $V:\cH\to F^2(H_n) \otimes \cG$ is an isometry. Note that $V\cH$ is
co-invariant under each operator $W_i\otimes I_\cG$,  and 
\begin{equation*}
F^2(H_n)\otimes \cG=\bigvee_{\alpha\in \FF_n^+} (W_\alpha\otimes
I_{\cG}) V\cH.
\end{equation*} According  to Theorem 1.13 from  \cite{Po-Noncommutative domains} and Theorem \ref{compact}, there exists a  unique unital completely positive
linear map
$$\Psi:C^*({\bf W})=\overline{\Span}\{W_\alpha W_\beta^*:\ \alpha,\beta\in \FF_n^+\}\to B(\cH)$$
 such that
$\Psi(W_\alpha W_\beta^*)=T_\alpha T_\beta^*$, \ $\alpha, \beta\in
\FF_n^+$.    Set $\cD:=\overline{\Delta_{g^{-1}}(T,T^*) \cH}$ and consider the $*$-representations
\begin{equation*}
\begin{split}
\pi_1:C^*({\bf W})\to B(F^2(H_n)&\otimes \cD),\quad \pi_1(X)
:= X\otimes I_{\cD}, \text{ and }\ \\
\pi_2:C^*({\bf W})\to B(F^2(H_n)&\otimes \cG),\quad \pi_2(X):=
X\otimes I_{\cG}.
\end{split}
\end{equation*}
Due to relations   \eqref{dil-canonic}, \eqref{another}, and the
co-invariance of the subspaces $K_{g,T}\cH$ and $V\cH$ under
$W_i\otimes I_\cD$ and $W_i\otimes I_\cG$, respectively,  we have
$$
\Psi(X)=K_{g,T}^*\pi_1(X)K_{g,T}=V^*\pi_2(X)V\quad \text{ for }
\  X\in C^*({\bf W}).
$$
Note that $\pi_1$
and $\pi_2$ are  minimal Stinespring dilations of $\Psi$.  Since
they are unique \cite{St}, there exists a unitary operator $U:F^2(H_n)\otimes
\cD \to F^2(H_n)\otimes \cG$ such that
$
U(W_i\otimes I_\cD )=(W_i\otimes
I_\cG)U,  i\in\{1,\ldots,n\},
$
and $UK_{g,T}=V$.  Since $U$ is unitary, we also have
$
U(W_i^*\otimes I_\cD)=(W_i^*\otimes
I_\cG)U, i\in\{1,\ldots,n\}.
$
On the other hand, since  $C^*({\bf W})$ is irreducible (see
Theorem \ref{irreducible}),  we must have $U=I\otimes A$, where
$A\in B(\cD,\cG)$ is a unitary operator.
Therefore, $\dim \cD=\dim\cG$ and
$UK_{g,T}\cH=V\cH$, which proves that the two dilations are
unitarily equivalent.
   The proof is complete.
\end{proof}

 We remark that if $s\in [1,\infty)$ and 
 $$
g_s:=1+\sum_{k=1}^\infty\left(\begin{matrix} s+k-1 \\k \end{matrix}\right)(Z_1+\cdots  +Z_k)^k.
$$ 
then the hypothesis  of Theorem \ref{unique-minimal} is satisfied and, consequently, any pure $n$-tuple in $\cD_{g_s^{-1}}(\cH)$ has a unique minimal dilation up to an isomorphism.

 Now, we can
characterize   the pure  $n$-tuples of operators in $\cD_{g^{-1}}(\cH)$ having  dilation index $1$.

\begin{corollary}\label{rank1}  Assume that $g$ satisfies the hypothesis of Theorem \ref{unique-minimal}  and let $(W_1,\ldots, W_n)$ be the universal model associated with $g$.

 \begin{enumerate}
 \item[(i)]
If $\cM\subset F^2(H_n)$ is  a co-invariant  subspace under each operator  $W_i$,
 then
$$
T:=(P_\cM W_1|_\cM,\ldots, P_\cM W_n|_\cM) 
$$
is a pure    $n$-tuple of operators in $\cD_{g^{-1}}(\cM)$ such that
$d_T=1$. Conversely, if $T\in \cD_{g^{-1}}(\cM)$ is pure and  $d_T=1$, then $T$ has the form above.
\item[(ii)]
If $\cM'$ is another co-invariant subspace under   each operator $W_i$,
   which gives rise to an $n$-tuple   $T':=(P_{\cM'} W_1|_{\cM'},\ldots, P_{\cM'} W_n|_{\cM'}) $, then $T$
and $T'$ are unitarily equivalent if and only if $$\cM=\cM'.$$
\end{enumerate}
\end{corollary}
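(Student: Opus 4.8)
The plan is to handle (i) and (ii) separately, in each case reducing the question to the purity and minimality machinery of Section 3 together with the irreducibility of $C^*({\bf W})$. For the forward direction of (i), I would first note that coinvariance of $\cM$ means $T_i^*=W_i^*|_\cM$, so that $(W_1,\ldots,W_n)$ acting on $F^2(H_n)=F^2(H_n)\otimes\CC$ is a dilation of $T$ with multiplicity space $\cG=\CC$. Theorem \ref{model}, applied with $\cD=\CC$, then shows immediately that $T$ is a pure $n$-tuple in $\cD_{g^{-1}}(\cM)$. To compute the dilation index I would combine the formula from the proof of Proposition \ref{minimal} with Proposition \ref{W}: since $\Delta_{g^{-1}}({\bf W},{\bf W}^*)=P_{\CC1}$, we get $\Delta_{g^{-1}}(T,T^*)=P_\cM P_{\CC1}|_\cM$, an operator of rank at most one. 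Because the canonical (Berezin) dilation is minimal, Proposition \ref{minimal} gives $d_T=\dim\overline{\Delta_{g^{-1}}(T,T^*)\cM}\le 1$; as a nonzero pure tuple forces $\Delta_{g^{-1}}(T,T^*)\neq 0$, we conclude $d_T=1$ for $\cM\neq\{0\}$.

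For the converse in (i), I would invoke Proposition \ref{many}(ii): if $T$ is pure with $d_T=1$, then its minimal dilation lives on $F^2(H_n)\otimes\cG$ with $\dim\cG=d_T=1$, i.e. $\cG=\CC$. Hence there is an isometry $V:\cM\to F^2(H_n)$ with $VT_i^*=W_i^*V$ and coinvariant range; setting $\cM_0:=V\cM$ identifies $T$ with the compression $(P_{\cM_0}W_1|_{\cM_0},\ldots,P_{\cM_0}W_n|_{\cM_0})$, which is exactly the asserted form.

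For (ii), the implication $\cM=\cM'\Rightarrow T=T'$ is immediate, so suppose conversely that $V:\cM\to\cM'$ is unitary with $VT_iV^*=T_i'$. The idea is to promote $V$ to a unitary on all of $F^2(H_n)$ intertwining each $W_i$ with itself. Concretely, the inclusion $\iota_\cM$ and the isometry $\iota_{\cM'}V$ are both minimal dilations of $T$ into $F^2(H_n)$ (minimality holds by part (i), since $d_T=1$ matches the multiplicity $\dim\CC=1$ via Proposition \ref{many}(ii)). By the uniqueness of minimal dilations in Theorem \ref{unique-minimal}, there is a unitary $\tilde V:F^2(H_n)\to F^2(H_n)$ with $\tilde VW_i=W_i\tilde V$ for all $i$ and $\tilde V|_\cM=V$. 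Then $\tilde V$ commutes with every $W_i$ and, being unitary, with every $W_i^*$, hence with all of $C^*({\bf W})$; since $C^*({\bf W})$ is irreducible by Theorem \ref{irreducible}, $\tilde V=\lambda I$ for some unimodular scalar $\lambda$, and therefore $\cM'=\tilde V\cM=\lambda\cM=\cM$.

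The main obstacle is the middle step of (ii): passing from unitary equivalence of the compressed tuples to an intertwiner of the full universal models. This is precisely where the hypotheses of Theorem \ref{unique-minimal} are essential, since they make the minimal dilation rigid enough to extend $V$ to $\tilde V$; the irreducibility of $C^*({\bf W})$ then collapses the extended intertwiner to a scalar. Once these two inputs are secured the equality $\cM=\cM'$ is forced, and the remaining verifications (that $V$ genuinely intertwines $W_i^*|_\cM$ and $W_i^*|_{\cM'}$, and that the two inclusions are bona fide minimal dilations) are routine.
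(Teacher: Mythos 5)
Your proposal is correct and follows essentially the same route as the paper: in (i) you compute $\Delta_{g^{-1}}(T,T^*)=P_\cM P_{\CC 1}|_\cM$, use purity to force rank one, and identify $d_T$ via Proposition \ref{minimal}; in (ii) you produce a global unitary on $F^2(H_n)$ commuting with the $W_i$'s that carries $\cM$ onto $\cM'$ and collapse it to a scalar by irreducibility of $C^*({\bf W})$. The only cosmetic difference is that you invoke the uniqueness statement of Theorem \ref{unique-minimal} applied to the two minimal dilations $\iota_\cM$ and $\iota_{\cM'}V$, whereas the paper reruns the Stinespring argument from that theorem's proof; the substance is identical (and you even cite Theorem \ref{irreducible} for irreducibility, where the paper's reference to Theorem \ref{compact} appears to be a slip).
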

\begin{proof}
 To prove item (i), note that 
\begin{equation*}\begin{split}
\Delta_{g^{-1}}(T,T^*)=
P_\cM\Delta_{g^{-1}}(W,W^*)|\cM
=P_\cM P_{\CC1} |\cM,
\end{split}
\end{equation*}
where ${\bf W}:=(W_1,\ldots, W_n)$.
Hence, $\text{\rm rank}\, \Delta_{g^{-1}}(T,T^*)\leq 1$. Since ${\bf W}$ is pure in $\cD_{g^{-1}}(F^2(H_n))$, so is 
  $T\in \cD_{g^{-1}}(\cM)$ . This
also implies that $\Delta_{g^{-1}}({T,T^*})\neq 0$, so $\text{\rm rank}\,
\Delta_{g^{-1}}({T,T^*})\geq 1$. Consequently, we have $d_T=\rank \Delta_{g^{-1}}({T,T^*})=1$.
The converse  follows from Theorem \ref{model} and Proposition \ref{minimal}.

To prove  item (ii), note that,  as in the proof of  Theorem
\ref{unique-minimal}, one can show that $T$ and $T'$ are unitarily
equivalent if and only if there exists a unitary operator
$\Lambda:F^2(H_n)\to F^2(H_n)$ such that
$$
\Lambda W_i=W_i \Lambda, \ \text{ for any  } i\in\{1, \ldots, n\}, \ \text{ and
}  \ \Lambda \cM=\cM'.
$$
Hence $\Lambda W_i^*=W_i^* \Lambda$, $i\in\{1,\ldots,n\}$. Since
$C^*({\bf W})$ is irreducible (see Theorem \ref{compact}),
$\Lambda$ must be a scalar multiple of the identity. Hence, we have
$\cM=\Lambda \cM=\cM'$.
The proof is complete.
\end{proof}

\bigskip

\section{Dilation theory on noncommutative domains}

This section is devoted to dilation theory for not necessarily pure  $n$-tules of operators  in noncommutative domains. The uniqueness of the dilation is also addressed.

 We refer the reader to \cite{Pa-book}, \cite{Pi-book} for basic facts concerning completely contractive (resp. positive) maps. We recall from  \cite{Po-Noncommutative domains} the following result that is needed in what follows.
\begin{theorem}\label{pure}
If   $g$ is an admissible  free holomorphic function and $T\in \overline{\cD^{pure}_{g^{-1}}(\cH)}$, then there is a unital completely contractive linear map
$$
\Psi_T:\cS:=\overline{span}\{W_\alpha W_\beta^*: \ \alpha,\beta\in \FF_n^+\}\to B(\cH),
$$
such that $\Psi_T (W_\alpha W_\beta^*)=T_\alpha T_\beta^*$  for any $\alpha,\beta\in \FF_n^+$.
If $T\in B(\cH)^n$ is a radially pure  $n$-tuple with respect to $\cD_{g^{-1}}(\cH)$, then
$$
\Psi_T(A)=\lim_{r\to 1}  K_{g,rT}^*(A\otimes I) K_{g,rT},\qquad A\in \cS,
$$
where the limit is in the operator norm.
\end{theorem}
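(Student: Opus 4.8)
The plan is to construct $\Psi_T$ first for pure tuples via the Berezin kernel and then pass to the closure by a uniform-boundedness argument. Note that $\cS$ is an operator system: it is self-adjoint, since $(W_\alpha W_\beta^*)^*=W_\beta W_\alpha^*$, and it contains the identity (take $\alpha=\beta=g_0$). For a pure $n$-tuple $X\in\cD^{pure}_{g^{-1}}(\cH)$, Theorem \ref{model} furnishes the Berezin kernel $K_{g,X}:\cH\to F^2(H_n)\otimes\cD$, which is an isometry satisfying $K_{g,X}X_i^*=(W_i^*\otimes I_\cD)K_{g,X}$. I would set
$$
\Psi_X(A):=K_{g,X}^*(A\otimes I_\cD)K_{g,X},\qquad A\in\cS.
$$
Since $A\mapsto A\otimes I_\cD$ is a unital $*$-representation and $K_{g,X}$ is an isometry, $\Psi_X$ is a unital completely positive map, hence completely contractive. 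Iterating the intertwining relation yields $K_{g,X}^*(W_\alpha\otimes I_\cD)=X_\alpha K_{g,X}^*$ and $(W_\beta^*\otimes I_\cD)K_{g,X}=K_{g,X}X_\beta^*$, whence
$$
\Psi_X(W_\alpha W_\beta^*)=K_{g,X}^*(W_\alpha\otimes I_\cD)(W_\beta^*\otimes I_\cD)K_{g,X}=X_\alpha K_{g,X}^*K_{g,X}X_\beta^*=X_\alpha X_\beta^*,
$$
using $K_{g,X}^*K_{g,X}=I_\cH$.

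Next, for $T\in\overline{\cD^{pure}_{g^{-1}}(\cH)}$ I would choose pure tuples $T^{(k)}\to T$ in norm and define $\psi_0$ on the algebraic span of $\{W_\alpha W_\beta^*\}$ by $\psi_0(W_\alpha W_\beta^*):=T_\alpha T_\beta^*$. For each fixed finite matrix $[P_{ij}]$ of such polynomials, the maps $\Psi_{T^{(k)}}$ from the first step satisfy $[\Psi_{T^{(k)}}(P_{ij})]\to[\psi_0(P_{ij})]$ in norm: each entry is a finite linear combination of products $T^{(k)}_\alpha (T^{(k)}_\beta)^*$, and these converge in norm because $T^{(k)}\to T$. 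Since each $\Psi_{T^{(k)}}$ is completely contractive, passing to the limit gives $\|[\psi_0(P_{ij})]\|\le\|[P_{ij}]\|$. Hence $\psi_0$ is completely contractive on the dense span and extends uniquely to a unital completely contractive map $\Psi_T:\cS\to B(\cH)$ with $\Psi_T(W_\alpha W_\beta^*)=T_\alpha T_\beta^*$; uniqueness holds because the values on the dense span are prescribed.

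For the final assertion, suppose $T$ is radially pure, so $rT\in\cD^{pure}_{g^{-1}}(\cH)$ for $r\in(\delta,1)$. By the first step each $\Phi_r(A):=K_{g,rT}^*(A\otimes I)K_{g,rT}$ is unital completely contractive, and $\Phi_r(W_\alpha W_\beta^*)=r^{|\alpha|+|\beta|}T_\alpha T_\beta^*\to T_\alpha T_\beta^*=\Psi_T(W_\alpha W_\beta^*)$ as $r\to1$, so $\Phi_r\to\Psi_T$ in norm on the span. For arbitrary $A\in\cS$ and $\varepsilon>0$ I would pick $P$ in the span with $\|A-P\|<\varepsilon$ and invoke $\|\Phi_r\|\le1$, $\|\Psi_T\|\le1$ in
$$
\|\Phi_r(A)-\Psi_T(A)\|\le\|\Phi_r(A-P)\|+\|\Phi_r(P)-\Psi_T(P)\|+\|\Psi_T(P-A)\|\le 2\varepsilon+\|\Phi_r(P)-\Psi_T(P)\|,
$$
so that $\limsup_{r\to1}\|\Phi_r(A)-\Psi_T(A)\|\le2\varepsilon$; letting $\varepsilon\to0$ yields the norm limit $\Psi_T(A)=\lim_{r\to1}K_{g,rT}^*(A\otimes I)K_{g,rT}$.

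The main obstacle I anticipate is essentially bookkeeping: making sure the bound $\|\Psi_{T^{(k)}}\|\le1$ (respectively $\|\Phi_r\|\le1$) is genuinely uniform so that it survives the passage to the limit, and verifying entrywise norm convergence at the matrix level rather than only scalarwise. Both reduce to the fact, supplied by Theorem \ref{model}, that every pure tuple has an isometric Berezin kernel, which forces the associated compression to be unital completely positive; once that is in place, the remainder is a standard density and uniform-boundedness argument.
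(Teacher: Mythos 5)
Your proof is correct. Note that the paper itself contains no proof of this statement---Theorem \ref{pure} is recalled verbatim from \cite{Po-Noncommutative domains}---so there is no internal argument to compare against; judged on its own merits, your route (isometric Berezin kernel for pure tuples giving a unital completely positive compression, norm approximation with the uniform complete-contractivity bound for tuples in $\overline{\cD^{pure}_{g^{-1}}(\cH)}$, and an $\varepsilon/3$ argument for the radial limit) is exactly the natural one, and every ingredient you invoke is supplied by Theorem \ref{model}. One small reordering would tighten the second step: declaring $\psi_0(W_\alpha W_\beta^*):=T_\alpha T_\beta^*$ on the algebraic span presumes the operators $W_\alpha W_\beta^*$ are linearly independent, so that the formula is consistent across representations of the same element; instead define $\psi_0(P):=\lim_k\Psi_{T^{(k)}}(P)$ for $P$ in the span---each $\Psi_{T^{(k)}}$ is a genuinely well-defined map on $\cS$, and your own computation shows the limit equals $\sum c_{\alpha\beta}T_\alpha T_\beta^*$ for every representation of $P$, so well-definedness of $\psi_0$ then comes for free rather than needing a separate independence lemma.
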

 We remark  the map $\Psi_T$ in Theorem \ref{pure} is also completely positive  due to the fact that $\cS$ is an operator system.

 Let $C^*(\cY)$ be the $C^*$-algebra generated by a set of
  operators
$\cY\subset B(\cK)$ and the identity. A  subspace $\cH\subseteq \cK$
is called $*$-cyclic  for $\cY$    if
$$\cK=\overline{\text{\rm span}}\left\{Xh:\ X\in C^*(\cY), \ h\in
\cH\right\}.
$$

Here is  a $C^*$-algebra  version  of the Wold decomposition \cite{W}  for  the unital 
$*$-representations  of  the $C^*$-algebra $C^*({\bf W})$ which was obtained in  \cite{Po-Noncommutative domains}. 

\begin{theorem}  \label{wold} Let $g$ be an admissible free holomorphic function  and 
let  ${\bf W}=(W_1,\ldots, W_n)$ be  the  weighted left creation operators associated with $g$.
  If the series $\sum_{k=0}^\infty\sum_{|\alpha|=k} a_\alpha W_\alpha W_\alpha^*$ is convergent in the operator norm topology, where $g^{-1}=1+\sum_{|\alpha|\geq 1} a_\alpha Z_\alpha$, and   
$\pi:C^*({\bf W})\to B(\cK)$ is  a unital
$*$-representation  of $C^*({\bf W})$ on a separable Hilbert
space  $\cK$, then $\pi$ decomposes into a direct sum
$$
\pi=\pi_0\oplus \pi_1 \  \text{ on  } \ \cK=\cK_0\oplus \cK_1,
$$
where $\pi_0$ and  $\pi_1$  are disjoint representations of
$C^*({\bf W})$ on the Hilbert spaces
\begin{equation*}
\begin{split}
\cK_0:&=\overline{\text{\rm span}}\left\{\pi(W_\beta)
\Delta_{g^{-1}}(\pi({\bf W}),\pi({\bf W})^*)\cK:\ \beta\in
\FF_n^+\right\}\quad \text{ and } \\
 \cK_1:&=\cK\ominus \cK_0,
\end{split}
\end{equation*}
 respectively, where $\pi({\bf W}):=(\pi(W_1),\ldots, \pi(W_n))$. Moreover, up to an isomorphism,
\begin{equation*}
\cK_0\simeq F^2(H_n)\otimes \cG, \quad  \pi_0(X)=X\otimes I_\cG \quad
\text{ for } \  X\in C^*({\bf W}),
\end{equation*}
 where $\cG$ is a Hilbert space with
$$
\dim \cG=\dim \left[\text{\rm range}\,\Delta_{g^{-1}}(\pi({\bf W}),\pi({\bf W})^*)\right],
$$
 and $\pi_1$ is a $*$-representation  which annihilates the compact operators  in $C^*({\bf W})$  and
$$
\Delta_{g^{-1}}(\pi_1({\bf W}),\pi_1({\bf W})^*)=0.
$$
If $\pi'$ is another $*$-representation  of $C^*({\bf W})$  on a separable Hilbert space $\cK'$, then $\pi$ is unitarily equivalent to  $\pi'$ if and only if $\dim \cG=\dim \cG'$ and $\pi_1$ is unitarily equivalent to $\pi_1'$.
 \end{theorem}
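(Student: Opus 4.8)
The plan is to reduce the statement to the classical structure theory for $*$-representations of the ideal of compact operators, exploiting that $\boldsymbol\cK(F^2(H_n))$ sits inside $C^*({\bf W})$ as its unique minimal ideal.

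First I would record the two facts that drive the argument. By Proposition \ref{W} we have $\Delta_{g^{-1}}({\bf W},{\bf W}^*)=P_{\CC 1}$, a rank-one projection, and since the defining series converges in norm, $P_{\CC 1}\in C^*({\bf W})$. A direct computation using $W_\alpha(\CC 1)=\CC e_\alpha$ gives $\sqrt{b_\alpha b_\beta}\,W_\alpha P_{\CC 1} W_\beta^{*}=E_{\alpha\beta}$, the standard matrix unit on $F^2(H_n)$; hence $\boldsymbol\cK(F^2(H_n))\subseteq C^*({\bf W})$ and, by Theorem \ref{compact}, it is the unique minimal two-sided ideal. Applying the norm-continuous $*$-representation $\pi$ to these relations produces a projection $Q:=\pi(P_{\CC 1})=\Delta_{g^{-1}}(\pi({\bf W}),\pi({\bf W})^{*})$ together with a system of matrix units $\pi(E_{\alpha\beta})$, so that $\pi$ restricts to a genuine $*$-representation of $\boldsymbol\cK(F^2(H_n))$. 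Using $W_\beta P_{\CC 1}=b_\beta^{-1/2}E_{\beta g_0}$ I would verify that the subspace $\cK_0$ of the statement coincides with the essential subspace $\overline{\pi(\boldsymbol\cK(F^2(H_n)))\cK}$ of this restricted representation.

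Next I would invoke the classification of nondegenerate $*$-representations of the compacts: such a representation is unitarily equivalent to a multiple $\mathrm{id}\otimes I_\cG$ of the identity representation, the multiplicity being the dimension of the range of the image of any minimal (rank-one) projection. Here that minimal projection is $P_{\CC 1}$, so $\dim\cG=\dim Q\cK=\dim[\mathrm{range}\,\Delta_{g^{-1}}(\pi({\bf W}),\pi({\bf W})^{*})]$, and this yields a unitary $\cK_0\simeq F^2(H_n)\otimes\cG$ intertwining $\pi|_{\cK_0}$ on the compacts with $X\mapsto X\otimes I_\cG$. On the complement $\cK_1:=\cK\ominus\cK_0$ the compacts act as $0$; in particular $\Delta_{g^{-1}}(\pi_1({\bf W}),\pi_1({\bf W})^{*})=\pi_1(P_{\CC 1})=0$ and $\pi_1$ annihilates $\boldsymbol\cK(F^2(H_n))$. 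Since $\boldsymbol\cK(F^2(H_n))$ is an ideal, its essential subspace reduces all of $\pi(C^*({\bf W}))$, so both $\cK_0$ and $\cK_1$ are reducing and $\pi=\pi_0\oplus\pi_1$.

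The step I expect to be the real crux is promoting the equivalence on $\cK_0$ from the compacts to all of $C^*({\bf W})$, that is, showing $\pi_0(X)\simeq X\otimes I_\cG$ for every $X$ and not merely for $X$ compact. For this I would use that $\boldsymbol\cK(F^2(H_n))$ is an essential ideal of $C^*({\bf W})$ (if $a\in C^*({\bf W})$ kills every $E_{\alpha\alpha}$ then $a=0$), so that $C^*({\bf W})$ embeds into the multiplier algebra $M(\boldsymbol\cK(F^2(H_n)))=B(F^2(H_n))$; a nondegenerate representation of an ideal extends uniquely to its multiplier algebra, and the canonical extension of $\mathrm{id}\otimes I_\cG$ is precisely $X\mapsto X\otimes I_\cG$. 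Uniqueness of this extension pins down $\pi_0$. Finally, disjointness is immediate, since any intertwiner $\cK_0\to\cK_1$ must vanish on the range of the compacts, which is dense in $\cK_0$; and for the classification a unitary $\pi\simeq\pi'$ necessarily carries the intrinsically defined essential subspace of the compacts onto its primed counterpart, hence restricts to $\pi_0\simeq\pi_0'$, equivalently $\dim\cG=\dim\cG'$ since both are multiples of the identity representation, and to $\pi_1\simeq\pi_1'$, the converse being trivial.
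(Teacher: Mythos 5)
Your proof is correct and takes essentially the same route as the paper's (the theorem is recalled from \cite{Po-Noncommutative domains}, and the proof sketched in the paper's source proceeds identically): establish $\boldsymbol\cK(F^2(H_n))\subset C^*({\bf W})$ via the matrix units $\sqrt{b_\alpha b_\beta}\,W_\alpha P_{\CC 1}W_\beta^*$, split $\cK$ into the essential subspace of the compacts and its orthogonal complement, and invoke the classification of nondegenerate representations of the compacts (a multiple of the identity, with multiplicity read off from the rank-one projection $P_{\CC 1}$) plus uniqueness of the extension of $\pi_0|_{\boldsymbol\cK(F^2(H_n))}$ to $C^*({\bf W})$. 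Your explicit multiplier-algebra justification of that extension step and your identification of $\cK_0$ with $\overline{\pi(\boldsymbol\cK(F^2(H_n)))\cK}$ correspond exactly to the paper's appeal to Theorem \ref{compact} and to ``standard theory of representations of $C^*$-algebras.''
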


A unital $*$-representation $\pi$  of the $C^*$-algebra  $C^*({\bf W})$ such that 
  $
  \Delta_{g^{-1}}(\pi({\bf W}),\pi({\bf W}^*))=0,
  $
is called {\it Cuntz type representation}.

\begin{theorem}\label{dilation}  Let $g$ be an admissible free holomorphic function  with
 $g^{-1}=1+\sum_{|\alpha|\geq 1} a_\alpha Z_\alpha$ and 
let  ${\bf W}=(W_1,\ldots, W_n)$ be  the  weighted left creation operators associated with $g$. Assume that  
 the series $\sum_{k=0}^\infty\sum_{|\alpha|=k} a_\alpha W_\alpha W_\alpha^*$ converges in the operator norm topology.

If $T=(T_1,\ldots, T_n)\in \overline{\cD^{pure}_{g^{-1}}(\cH)}$, then there is a $*$-representation  $\pi:C^*({\bf W})\to \cK_\pi$ on a separable Hilbert space $\cK_\pi$ satisfying the following properties.
\begin{enumerate}
\item[(i)]
$\pi$  annihilates  the compact operators  in $C^*({\bf W})$ and 
$
\Delta_{g^{-1}}(\pi({\bf W}),\pi({\bf W})^*)=0,
$
where $\pi({\bf W}):=(\pi(W_1),\ldots, \pi(W_n))$.
\item[(ii)]  $\cH$ can be identified with a  $*$-cyclic  co-invariant subspace  of $\cK:=(F^2(H_n)\otimes \cD)\bigoplus \cK_\pi$
under the operators 
$(W_i\otimes I_\cD)\bigoplus \pi(W_i)$ such that
$$
T_i^*=[(W_i^*\otimes I_\cD)\bigoplus \pi(W_i)^*] |_\cH,\qquad  i\in\{1,\ldots, n\},
$$
where $\cD:=\overline{\Delta_{g^{-1}}(T,T^*)\cH}$. Moreover, $T\in \cD_{g^{-1}}(\cH)$.
\end{enumerate}
\end{theorem}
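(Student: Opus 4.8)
The plan is to dilate $T$ to a $*$-representation of $C^*(\mathbf W)$ by the Arveson--Stinespring machine, and then to decompose that representation with the Wold-type Theorem \ref{wold}. First, since $T\in\overline{\cD^{pure}_{g^{-1}}(\cH)}$, Theorem \ref{pure} supplies a unital completely positive map $\Psi_T:\cS\to B(\cH)$, where $\cS=\overline{\mathrm{span}}\{W_\alpha W_\beta^*\}$, with $\Psi_T(W_\alpha W_\beta^*)=T_\alpha T_\beta^*$. As $\cS$ is an operator system in $C^*(\mathbf W)$ (by Theorem \ref{compact}(i) it even contains the compacts), I extend $\Psi_T$ by Arveson's extension theorem to a unital completely positive $\widetilde\Psi_T:C^*(\mathbf W)\to B(\cH)$ and take a minimal Stinespring dilation $\widetilde\Psi_T(X)=V^*\rho(X)V$, where $\rho:C^*(\mathbf W)\to B(\cK)$ is a $*$-representation, $V:\cH\to\cK$ is an isometry, and $V\cH$ is $*$-cyclic for $\rho(C^*(\mathbf W))$ (separability of $\cK$, hence of the Cuntz part, follows from that of $\cH$).

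Second, I establish co-invariance by a multiplicative-domain computation. Since $\widetilde\Psi_T(W_iW_i^*)=T_iT_i^*=\widetilde\Psi_T(W_i^*)^*\widetilde\Psi_T(W_i^*)$, the standard identity $\|\rho(W_i)^*Vh-VT_i^*h\|^2=\langle(\widetilde\Psi_T(W_iW_i^*)-T_iT_i^*)h,h\rangle$ has vanishing right-hand side, so $\rho(W_i)^*V=VT_i^*$ for all $i$; inserting the projection $VV^*$ repeatedly yields $\rho(W_\alpha)^*V=VT_\alpha^*$ for every $\alpha\in\FF_n^+$. Thus $V\cH$ is a co-invariant subspace under each $\rho(W_i)$ and $T_i^*=\rho(W_i)^*|_{V\cH}$. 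Applying now Theorem \ref{wold} to $\rho$ (permissible because $\sum a_\alpha W_\alpha W_\alpha^*$ converges in norm) splits $\cK=(F^2(H_n)\otimes\cG)\oplus\cK_\pi$ with $\rho(X)=(X\otimes I_\cG)\oplus\pi(X)$, where $\pi$ annihilates the compacts and $\Delta_{g^{-1}}(\pi(\mathbf W),\pi(\mathbf W)^*)=0$. This is exactly (i), and together with the previous paragraph it gives $T_i^*=[(W_i^*\otimes I_\cG)\oplus\pi(W_i)^*]|_{V\cH}$ with $V\cH$ $*$-cyclic and co-invariant.

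Third, I verify $T\in\cD_{g^{-1}}(\cH)$ by compressing through $V$. From $\rho(W_\alpha)^*V=VT_\alpha^*$ and norm convergence I obtain $\Delta_{g^{-1}}(T,T^*)=V^*\rho(P_{\CC1})V\ge 0$, which is condition (i) of the domain. For condition (ii), the partial sums of $\sum_\alpha b_\alpha W_\alpha P_{\CC1}W_\alpha^*$ increase to $I$ by Proposition \ref{W}, hence map under the $*$-representation $\rho$ to positive operators bounded by $I$; compressing by $V^*(\cdot)V$ and using $\rho(W_\alpha)^*V=VT_\alpha^*$ gives $\sum_\alpha b_\alpha T_\alpha\Delta_{g^{-1}}(T,T^*)T_\alpha^*\le I$ (here the passage $\rho(\mathrm{SOT\text{-}lim})\neq\mathrm{SOT\text{-}lim}\,\rho$ is exactly what leaves room for a nontrivial Cuntz part, so $T$ need not be pure).

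Finally I must identify $\cG$ with $\cD=\overline{\Delta_{g^{-1}}(T,T^*)\cH}$, and this is the delicate point. Writing $Q:=\rho(P_{\CC1})$, which by the Wold split is the projection onto $\CC1\otimes\cG$, the relation $\Delta_{g^{-1}}(T,T^*)=V^*QV$ gives $\|QVh\|=\|\Delta_{g^{-1}}(T,T^*)^{1/2}h\|$, so $\Delta_{g^{-1}}(T,T^*)^{1/2}h\mapsto QVh$ extends to a unitary from $\cD$ onto $\overline{QV\cH}$. The crux is to prove $QV\cH$ is dense in $Q\cK=\CC1\otimes\cG$. I take $z\in Q\cK$ with $z\perp V\cH$ and show $z\perp\rho(Y)V\cH$ for every $Y$: since $z=Qz$, one has $\rho(Y)^*z=\rho(Y^*P_{\CC1})z$, and $Y^*P_{\CC1}$ is the rank-one operator $\xi\mapsto\langle\xi,1\rangle\,Y^*1$, which is approximated in norm by $A_NP_{\CC1}$ with $A_N\in\cA(g)$ (using $Y^*1\in F^2(H_n)=\overline{\mathrm{span}}\{W_\alpha 1\}$). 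Then $\langle\rho(Y)Vh,z\rangle=\lim_N\langle\rho(A_N^*)Vh,z\rangle$, and co-invariance makes each $\rho(A_N^*)Vh\in V\cH$, so every term vanishes; $*$-cyclicity forces $z=0$. Hence $QV\cH$ is dense, $\dim\cG=\dim\cD$, and we may take $\cG=\cD$. The main obstacle is precisely this density step, where the norm convergence of the defining series, the rank-one structure of $P_{\CC1}=\Delta_{g^{-1}}(\mathbf W,\mathbf W^*)$, and the co-invariance of $V\cH$ must be combined.
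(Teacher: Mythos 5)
Your proposal is correct, and its skeleton is the same as the paper's: Theorem \ref{pure} plus Arveson's extension theorem produce a unital completely positive extension of $W_\alpha W_\beta^*\mapsto T_\alpha T_\beta^*$ to $C^*({\bf W})$, a minimal Stinespring dilation gives a $*$-representation with $\cH$ (via $V$) co-invariant and $T_i^*=\rho(W_i)^*|_{V\cH}$ (your Cauchy--Schwarz identity is exactly the paper's computation showing $P_\cH\tilde\pi(W_i)|_{\cH^\perp}=0$), and Theorem \ref{wold} then splits off the pure part and the Cuntz-type part $\pi$. The only place where you genuinely diverge is the identification of the multiplicity space $\cG$ with $\cD=\overline{\Delta_{g^{-1}}(T,T^*)\cH}$, which you correctly flag as the delicate point. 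The paper gets it by combining minimality with the structure of the compacts established in the proof of Theorem \ref{compact} (every compact lies in the closed span of $W_\alpha P_{\CC 1}W_\beta^*$), which collapses $\mathrm{range}\,\tilde\pi(P_{\CC1})$ to $\overline{\mathrm{span}}\{\tilde\pi(P_{\CC1})\tilde\pi(W_\beta^*)\cH\}$, and then builds an explicit unitary $\Lambda$ onto $\cD$ from a Gram-matrix computation based on the multiplicativity relation $\widetilde\Phi_T(W_\alpha X)=\widetilde\Phi_T(W_\alpha)\widetilde\Phi_T(X)$. You instead prove directly that $QV\cH$ is dense in $Q\cK$ by an orthocomplement argument: the rank-one operator $Y^*P_{\CC1}$ is norm-approximated by $A_NP_{\CC1}$ with $A_N$ polynomials in the $W_i$ (using that $\{W_\alpha 1\}$ spans $F^2(H_n)$), co-invariance pushes $\rho(A_N)^*V\cH$ back into $V\cH$, and $*$-cyclicity kills the orthogonal vector; the unitary $\cD\to\CC1\otimes\cG$ then comes from the identity $\|QVh\|=\|\Delta_{g^{-1}}(T,T^*)^{1/2}h\|$. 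Your variant is slightly more self-contained, since it needs only the rank-one structure of $P_{\CC1}=\Delta_{g^{-1}}({\bf W},{\bf W}^*)$ rather than the full description of the compact ideal, while the paper's route yields the unitary $\Lambda$ in explicit form; both arguments hinge on the same three ingredients (norm convergence of the defining series, minimality of the Stinespring dilation, co-invariance of $V\cH$). Your verification that $T\in\cD_{g^{-1}}(\cH)$ by compressing the increasing partial sums of $\sum b_\alpha W_\alpha P_{\CC1}W_\alpha^*$ is also sound and supplies detail the paper leaves as ``easy to see.''
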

\begin{proof}

According to  Theorem \ref{pure},   there is a unique unital
completely contractive linear map
$$
\Phi_T:\cS:=\overline{\Span}\{W_\alpha W_\beta^*: \ \alpha,\beta\in \FF_n^+\}\to B(\cH),
$$
such that $\Phi_T (W_\alpha W_\beta^*)=T_\alpha T_\beta^*$  for any $\alpha,\beta\in \FF_n^+$.
 Applying Arveson extension theorem
\cite{Arv1-acta} to the map $\Phi_T$, we find a unital
completely positive linear map $\widetilde\Phi_T:C^*({\bf W})\to
B(\cH)$ such the $\widetilde\Phi_T |_\cS=\Phi_T$. Let $\tilde\pi:C^*({\bf W})\to B(\tilde\cK)$ be a
minimal Stinespring dilation \cite{St} of $\widetilde\Phi_T$. Then
$$\widetilde\Phi_T(X)=P_{\cH} \tilde\pi(X)|\cH,\quad X\in C^*({\bf W}),
$$
and $\tilde\cK=\overline{\text{\rm span}}\{\tilde\pi(X)h:\ h\in
\cH\}.$  Since, for each $i\in \{1,\ldots, n\}$,
\begin{equation*}
\begin{split}
\widetilde\Phi_T(W_i W_i^*)&=T_iT_i^*=P_\cH\tilde\pi(W_i)\tilde\pi(W_i^*)|\cH\\
&=P_\cH \tilde\pi(W_i)(P_\cH+P_{\cH^\perp})\tilde\pi(W_i^*)|\cH\\
&=\widetilde\Phi_T(W_i W_i^*)+ (P_\cH
\tilde\pi(W_i)|_{\cH^\perp})(P_{\cH^\perp} \tilde\pi(W_i^*)|\cH),
\end{split}
\end{equation*}
 it is easy to see  that $P_\cH \tilde\pi(W_i)|_{\cH^\perp}=0$ and
\begin{equation}\label{morph}
\begin{split}
\widetilde\Phi_T(W_\alpha X)&=P_\cH(\tilde\pi(W_\alpha) \tilde\pi(X))|\cH\\
&=(P_\cH\tilde\pi(W_\alpha)|\cH)(P_\cH\tilde\pi(X)
|\cH)
=\widetilde\Phi_T(W_\alpha) \widetilde\Phi_T(X)\end{split}
\end{equation}
for any $X\in C^*({\bf W})$ and $\alpha\in \FF_n^+$. Since
 $P_\cH \tilde\pi(W_i)|_{\cH^\perp}=0$,
  $\cH$ is an
invariant subspace under each operator $\tilde\pi(W_i)^*$.
 Consequently, we have
  \begin{equation}\label{coiso}
\tilde\pi(W_i)^*|_\cH=\widetilde\Phi_T (W_i^*)=T_i^*,\quad i\in \{1,\ldots, n\}.
\end{equation}

 By the Wold decomposition of Theorem \ref{wold},
the representation $\tilde\pi$ decomposes into a direct sum
$\tilde\pi=\pi_0\oplus \pi$ on $\tilde \cK=\cK_0\oplus \cK_\pi$,
where $\pi_0$, $\pi$  are disjoint representations of
$C^*({\bf W})$ on the Hilbert spaces $\cK_0$ and $\cK_\pi$,
respectively, such that
\begin{equation}\label{sime}
\cK_0\simeq F^2(H_n)\otimes \cG, \quad  \pi_0(X)=X\otimes I_\cG, \quad
X\in C^*({\bf W}),
\end{equation}
 for some Hilbert space $\cG$, and $\pi$ is a representation such that
$\pi(\boldsymbol\cK(F^2(H_n)))=0$,  where $\boldsymbol\cK(F^2(H_n))$ stands for all compact operators in $B(F^2(H_n))$. Now, taking into account that $\Delta_{g^{-1}}({\bf W}, {\bf W}^*)=P_{\CC 1}$
  is a
  rank-one projection   in $C^*({\bf W})$, we  have that
$$
\Delta_{g^{-1}}(\pi({\bf W}),\pi({\bf W})^*)=0\quad \text{ and }\quad
\dim \cG=\dim (\text{\rm range}\,\tilde\pi(P_{\CC 1})).
$$
Taking into account that  the  Stinespring representation $\tilde\pi$  is minimal and
using the proof of Theorem \ref{compact}, we have 
\begin{equation*}\begin{split}
\text{\rm range}\,\tilde\pi(P_\CC)&=
\overline{\text{\rm span}}\{\tilde\pi(P_{\CC1})\tilde\pi(X)h:\ X\in C^*({\bf W}), h\in \cH\}\\
&=
\overline{\text{\rm span}}\{\tilde\pi(P_{\CC1})\tilde\pi(Y)h:\ Y\in \boldsymbol\cK(F^2(H_n)), h\in \cH\}\\
&=
\overline{\text{\rm span}}\{\tilde\pi(P_{\CC1})\tilde\pi(W_\alpha P_{\CC1} W_\beta^*)h:\ \alpha,\beta\in \FF_n^+, h\in \cH\}\\
&= \overline{\text{\rm
span}}\{\tilde\pi(P_{\CC1})\tilde\pi(W_\beta^*)h:\ \beta\in
\FF_n^+, h\in \cH\}.
\end{split}
\end{equation*}
Using   relation \eqref{morph}, we deduce that
\begin{equation*}\begin{split}
\left<\tilde\pi(P_{\CC1})\tilde\pi(W_\alpha^*)h,
\tilde\pi(P_{\CC1})\tilde\pi(W_\beta^*)k\right>
&=
\left<h,\pi(W_\alpha)\pi(P_{\CC1})\pi(W_\beta^*)k\right>\\
&= \left<h,T_\alpha \Delta_{g^{-1}}(T,T^*)T_\beta^*k\right>\\
&= \left< \Delta_{g^{-1}}(T,T^*)^{1/2}T_\alpha^*h, \Delta_{g^{-1}}(T,T^*)^{1/2}T_\beta^*k\right>
\end{split}
\end{equation*}
for any $h, k \in \cH$. Consequently, there is   a unitary operator
$\Lambda:\text{\rm range}\,\tilde\pi(P_{\CC1})\to
\overline{ \Delta_{g^{-1}}(T,T^*)\cH}$  defined by
$$
\Lambda(\tilde\pi(P_{\CC1})\tilde\pi(W_\alpha^*)h):= \Delta_{g^{-1}}(T,T^*)^{1/2}
T_\alpha^*h,\quad h\in \cH, \,\alpha\in \FF_n^+.
$$
 This shows that
$$
\dim[\text{\rm range}\,\pi(P_{\CC1})]= \dim
\overline{ \Delta_{g^{-1}}(T,T^*)\cH}=\dim \cG.
$$
The required dilation is obtained using relations \eqref{coiso} and
\eqref{sime},  and identifying    $\cG$ with
$\overline{ \Delta_{g^{-1}}(T,T^*)\cH}$. Since $T_i^*=[(W_i^*\otimes I_\cD)\bigoplus \pi(W_i)^*]_\cH$ for $i\in\{1,\ldots, n\}$, it is easy to see that $T\in \cD_{g^{-1}}(\cH)$.
 The proof is complete.
\end{proof}
Note  that any   radial-pure $n$-tuple $T$  in $\cD_{g^{-1}}(\cH)$ is in  $\overline{\cD^{pure}_{g^{-1}}(\cH)}$, and, consequently,  Theorem \ref{pure} applies.

\begin{corollary}  Let  $\cD_{g^{-1}} $ be  a radially pure domain  such   that $g^{-1}=\sum_{\alpha\in \FF_n^+} a_\alpha Z_\alpha$  has the property  that the series $\sum_{k=0}^\infty\sum_{|\alpha|=k} a_\alpha W_\alpha W_\alpha^*$ converges in the operator norm,  and let $T=(T_1,\ldots, T_n)\in B(\cH)^n$.  Then 
$T\in  \cD_{g^{-1}}(\cH)$  if and only if  there is a Hilbert space $\cD$ and  a  Cuntz $*$-representation  $\pi:C^*({\bf W})\to \cK_\pi$ on a separable Hilbert space $\cK_\pi$    such that 
$$
T_i^*=[(W_i^*\otimes I_\cD)\bigoplus \pi(W_i)^*] |_\cH,\qquad  i\in\{1,\ldots, n\}.
$$
\end{corollary}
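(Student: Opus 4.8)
The plan is to prove the two implications separately. The forward direction will follow almost immediately from Theorem \ref{dilation}, while the converse will be a short compression argument showing that a co-invariant compression of a $\cD_{g^{-1}}$-tuple of the special ``pure $\oplus$ Cuntz'' form remains in $\cD_{g^{-1}}$.

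For the implication $T\in\cD_{g^{-1}}(\cH)\Rightarrow$ existence of the dilation, I would first use the hypothesis that $\cD_{g^{-1}}$ is radially pure: by definition this forces $T$ to be a radially pure $n$-tuple, hence (by the observation recorded just after Theorem \ref{dilation}) $T\in\overline{\cD^{pure}_{g^{-1}}(\cH)}$. Since the series $\sum_{k=0}^\infty\sum_{|\alpha|=k} a_\alpha W_\alpha W_\alpha^*$ is assumed to converge in the operator norm, the hypotheses of Theorem \ref{dilation} are met. Applying it produces the Hilbert space $\cD:=\overline{\Delta_{g^{-1}}(T,T^*)\cH}$ together with a $*$-representation $\pi:C^*({\bf W})\to B(\cK_\pi)$ that annihilates the compact operators and satisfies $\Delta_{g^{-1}}(\pi({\bf W}),\pi({\bf W})^*)=0$ --- that is, $\pi$ is a Cuntz type representation --- and gives exactly the coextension $T_i^*=[(W_i^*\otimes I_\cD)\oplus\pi(W_i)^*]|_\cH$. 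This is the desired conclusion.

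For the converse I would set $A_i:=(W_i\otimes I_\cD)\oplus\pi(W_i)$ on $\cK:=(F^2(H_n)\otimes\cD)\oplus\cK_\pi$ and first verify that $A:=(A_1,\dots,A_n)\in\cD_{g^{-1}}(\cK)$. As each $A_i$ is block diagonal, so is every $A_\alpha$, and using Proposition \ref{W} on the first summand together with the Cuntz condition on the second, one computes
$$
\Delta_{g^{-1}}(A,A^*)=(P_{\CC1}\otimes I_\cD)\oplus 0\ge 0,\qquad
\sum_{\alpha\in\FF_n^+} b_\alpha A_\alpha\Delta_{g^{-1}}(A,A^*)A_\alpha^*=I\oplus 0\le I,
$$
so $A\in\cD_{g^{-1}}(\cK)$. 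Then, since $\cH$ is co-invariant under each $A_i$ and $T_i^*=A_i^*|_\cH$, the same relation holds for words, $T_\alpha^*=A_\alpha^*|_\cH$ with $A_\alpha^*\cH\subseteq\cH$. From this I would obtain $\Delta_{g^{-1}}(T,T^*)=P_\cH\,\Delta_{g^{-1}}(A,A^*)|_\cH\ge 0$, and for $h\in\cH$ the co-invariance lets every intermediate projection be dropped:
$$
\Big\langle\sum_{\alpha\in\FF_n^+} b_\alpha T_\alpha\Delta_{g^{-1}}(T,T^*)T_\alpha^*h,h\Big\rangle
=\sum_{\alpha\in\FF_n^+} b_\alpha\big\langle\Delta_{g^{-1}}(A,A^*)A_\alpha^*h,A_\alpha^*h\big\rangle
\le\langle Ih,h\rangle,
$$
where the final inequality regroups the sum as $\langle\sum_\alpha b_\alpha A_\alpha\Delta_{g^{-1}}(A,A^*)A_\alpha^*h,h\rangle$ and invokes $A\in\cD_{g^{-1}}(\cK)$. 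This gives condition (ii) and hence $T\in\cD_{g^{-1}}(\cH)$.

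The substantive points are therefore contained in the converse: confirming that $A$ genuinely lands in $\cD_{g^{-1}}(\cK)$, and keeping track of the weak/strong operator convergence of $\Delta_{g^{-1}}(A,A^*)$ under the compressions. Neither is hard --- the series converge in the strong operator topology on each summand by Proposition \ref{W} and the Cuntz condition --- but this bookkeeping is the only place where care is needed, since the forward direction is essentially a single invocation of Theorem \ref{dilation}.
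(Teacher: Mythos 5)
Your proposal is correct and follows essentially the same route as the paper: the forward direction uses the radially pure hypothesis to place $T$ in $\overline{\cD^{pure}_{g^{-1}}(\cH)}$ and then invokes Theorem \ref{dilation}, while the converse amounts to noting that the direct-sum tuple $(W_i\otimes I_\cD)\oplus\pi(W_i)$ lies in $\cD_{g^{-1}}(\cK)$ and that compression to the co-invariant subspace $\cH$ keeps one in the domain. The only difference is that you spell out the compression bookkeeping (via Proposition \ref{W} and the identity $\Delta_{g^{-1}}(T,T^*)=P_\cH\Delta_{g^{-1}}(A,A^*)|_\cH$) which the paper leaves implicit.
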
 
\begin{proof} Since  $\cD_{g^{-1}} $ is   a radially pure domain, we have $\cD_{g^{-1}}(\cH)\subset\overline{\cD^{pure}_{g^{-1}}(\cH)}$.  The direct implication  follows from Theorem \ref{dilation}.
To prove the converse, assume that $T=(T_1,\ldots, T_n)\in B(\cH)^n$ satisfies the relation above.
Since ${\bf W}\in \cD_{g^{-1}}(F^2(H_n))$ and $\pi({\bf W})\in  \cD_{g^{-1}}(\cK_\pi)$, we conclude that $T\in \cD_{g^{-1}}(\cH)$. The proof is complete.
\end{proof} 

We remark that,  in the particular case when 
$g^{-1}=(1-\sum_{1\leq |\alpha|\leq N} c_\alpha  Z_\alpha )^m$, $ N, m\geq 1,
$ 
and 
$c_\alpha\geq 0$  with $c_{g_i}>0$ for $i\in \{1,\ldots, n\}$,  the noncommutative set $\cD_{g^{-1}} $ is   a radially pure domain. This domain   was  studied in \cite{Po-Berezin}.

\begin{corollary}\label{unique3}
Assume  the hypotheses of Theorem $\ref{dilation}$ and that
$$
\lim_{\gamma\to \infty} \left(\frac{b_{g_p\gamma}}{b_{g_ig_p\gamma}}-\frac{b_\gamma}{b_{g_p\gamma}}\right) =0
$$
for every $i,p\in \{1,\ldots, n\}$.  
 Then the  dilation of Theorem $\ref{dilation}$ is minimal, i.e.,
$\cK=\bigvee\limits_{\alpha\in \FF_n^+} V_\alpha \cH$,   and
it is unique up to a unitary equivalence.
\end{corollary}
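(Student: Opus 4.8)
The plan is to leverage the extra hypothesis through Theorem \ref{compact}(iii): the requirement that $\lim_{\gamma\to\infty}\bigl(\tfrac{b_{g_p\gamma}}{b_{g_ig_p\gamma}}-\tfrac{b_\gamma}{b_{g_p\gamma}}\bigr)=0$ for all $i,p$ is precisely what forces $C^*({\bf W})=\overline{\text{\rm span}}\{W_\alpha W_\beta^*:\alpha,\beta\in\FF_n^+\}$. In particular, the operator system $\cS$ on which $\Phi_T$ was defined in the proof of Theorem \ref{dilation} now coincides with the whole $C^*$-algebra $C^*({\bf W})$, so $\Phi_T$ is already a unital completely positive map on $C^*({\bf W})$ and the Arveson extension step becomes vacuous, i.e. $\widetilde\Phi_T=\Phi_T$. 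I write $V_i:=\tilde\pi(W_i)=(W_i\otimes I_\cD)\oplus\pi(W_i)$ for the dilation operators on $\cK=(F^2(H_n)\otimes\cD)\oplus\cK_\pi$.

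First I would establish minimality. Since $\tilde\pi$ is, by construction, a minimal Stinespring dilation of $\widetilde\Phi_T$, we have $\cK=\overline{\text{\rm span}}\{\tilde\pi(X)h:X\in C^*({\bf W}),\,h\in\cH\}$. Substituting the identity $C^*({\bf W})=\overline{\text{\rm span}}\{W_\alpha W_\beta^*\}$ and using the co-invariance of $\cH$, which gives $\tilde\pi(W_\beta)^*h=T_\beta^*h\in\cH$ for $h\in\cH$, each generating vector satisfies $\tilde\pi(W_\alpha W_\beta^*)h=\tilde\pi(W_\alpha)(T_\beta^*h)\in\bigvee_{\gamma}V_\gamma\cH$; the reverse containment is clear. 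Hence $\cK=\bigvee_{\alpha\in\FF_n^+}V_\alpha\cH$, which is the claimed minimality.

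For uniqueness, I would argue that any dilation of the asserted form is forced to be a minimal Stinespring dilation of $\Phi_T$. Given such a dilation, encode it as a $*$-representation $\tilde\pi'(X)=(X\otimes I_{\cD'})\oplus\pi'(X)$ of $C^*({\bf W})$ with $\cH$ co-invariant. The same computation as above shows $\tilde\pi'(W_\alpha)^*|_\cH=T_\alpha^*$, hence $P_\cH\tilde\pi'(W_\alpha)|_\cH=T_\alpha$, and therefore $P_\cH\tilde\pi'(W_\alpha W_\beta^*)|_\cH=T_\alpha T_\beta^*=\Phi_T(W_\alpha W_\beta^*)$; by norm-continuity and density this compression equals $\Phi_T$ on all of $C^*({\bf W})$. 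The minimality of the dilation is exactly $*$-cyclicity of $\cH$ for $\tilde\pi'(C^*({\bf W}))$, so $\tilde\pi'$ is a minimal Stinespring dilation of $\Phi_T$. By the essential uniqueness of minimal Stinespring dilations \cite{St}, any two such representations are unitarily equivalent via a unitary fixing $\cH$ and intertwining the dilation operators, which yields uniqueness up to unitary equivalence.

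The step that requires the most care, and the one where the hypothesis is genuinely used, is the identification of minimality of the dilation with $*$-cyclicity of $\cH$: this rests on $C^*({\bf W})=\overline{\text{\rm span}}\{W_\alpha W_\beta^*\}$ together with co-invariance, which together collapse $\overline{\text{\rm span}}\{\tilde\pi(X)\cH\}$ onto $\bigvee_\alpha V_\alpha\cH$. Without the limit hypothesis this reduction fails and the Stinespring uniqueness machinery cannot be invoked directly. Once this identification is secured, the remainder is a routine application of Stinespring uniqueness.
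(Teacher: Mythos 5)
Your proof is correct and follows essentially the same route as the paper: Theorem \ref{compact}(iii) collapses the operator system $\cS$ onto all of $C^*({\bf W})$, making $\widetilde\Phi_T$ unique, and the uniqueness of the minimal Stinespring dilation then yields uniqueness of the minimal dilation. The only difference is that you spell out the identification of Stinespring minimality with $\cK=\bigvee_{\alpha\in\FF_n^+}V_\alpha\cH$ (via co-invariance of $\cH$ and density of $\operatorname{span}\{W_\alpha W_\beta^*\}$), a step the paper leaves implicit.
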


\begin{proof} According to Theorem \ref{compact}, part (iii), we have 
$
C^*({\bf W})=\overline{\text{\rm span}}\{W_\alpha W_\beta^*:\ \alpha,\beta\in \FF_n^+\}.
$
Consequently, the map $\widetilde\Phi_T$
in the proof of  Theorem \ref{dilation} is unique. The uniqueness of the
minimal Stinespring representation  \cite{St} implies  the
uniqueness of the minimal dilation of Theorem \ref{dilation}.
\end{proof}

Using standard arguments  concerning
representation theory of $C^*$-algebras \cite{Arv-book}, one can use Theorem \ref{dilation} to obtain
 the following result.

\begin{remark} Under the hypotheses of Corollary
\ref{unique3}, let $T':=(T_1',\ldots, T_n')\in  \overline{\cD^{pure}_{g^{-1}}(\cH')}$   and
let $V':=(V_1',\ldots, V_n')$ be the corresponding 
dilation. Then  $T$ and $T'$ are unitarily equivalent
if and only if
$$\dim \overline{\Delta_{g^{-1}}(T,T^*)\cH}=\dim \overline{\Delta_{g^{-1}}(T',{T'}^*)\cH'}
$$
and there are unitary operators
 $U:\overline{\Delta_{g^{-1}}(T,T^*)\cH}\to \overline{\Delta_{g^{-1}}(T',{T'}^*)\cH'}$
  and $\Gamma:\cK_\pi\to \cK_{\pi '}$
such that $ \Gamma\pi(W_i)=\pi'(W_i) \Gamma$  for  all  
$i\in\{1,\ldots, n\}$, and
$$\left[ \begin{matrix} I_{F^2(H_n)}\otimes U&0\\
0&\Gamma\end{matrix} \right] \cH=\cH'.
$$
\end{remark}

\begin{corollary} \label{part-cases} Assume the hypotheses of Corollary
\ref{unique3}.   Let  $T=(T_1,\ldots,T_n)\in T\in \overline{\cD^{pure}_{g^{-1}}(\cH)}$ and 
let $V:=(V_1,\ldots, V_n)$ be its minimal dilation on the Hilbert space  $\cK:=(F^2(H_n)\otimes \cD)\bigoplus \cK_\pi$.
   Then the following statements hold.
\begin{enumerate}
\item[(i)]
 $V$ is a pure $n$-tuple  in $\cD_{g^{-1}}(\cK)$ if and only if
 $T $
 is  a pure $n$-tuple in $\cD_{g^{-1}}(\cH)$;
\item[(ii)] $V$ is a Cuntz $n$-tuple  in $\cD_{g^{-1}}(\cK)$, i.e. $\Delta_{g^{-1}}(V,V^*)=0$,  if and only if  $\Delta_{g^{-1}}(T,T^*)=0$.
   \end{enumerate}
\end{corollary}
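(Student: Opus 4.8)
The plan is to reduce both statements to two explicit block computations for the minimal dilation $V$, taking advantage of the fact that $V_i=(W_i\otimes I_\cD)\oplus\pi(W_i)$ is block diagonal with respect to the decomposition $\cK=\cK_0\oplus\cK_\pi$, where $\cK_0:=F^2(H_n)\otimes\cD$ is the pure summand and $\pi$ is the Cuntz-type representation produced in Theorem \ref{dilation}.

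First I would compute $\Delta_{g^{-1}}(V,V^*)$. Since $V_\alpha=(W_\alpha\otimes I_\cD)\oplus\pi(W_\alpha)$, the defining series splits as a block-diagonal sum. By Proposition \ref{W} the $\cK_0$-block equals $\Delta_{g^{-1}}({\bf W},{\bf W}^*)\otimes I_\cD=P_{\CC1}\otimes I_\cD$, while the $\cK_\pi$-block equals $\Delta_{g^{-1}}(\pi({\bf W}),\pi({\bf W})^*)=0$ because $\pi$ is a Cuntz-type representation. Hence $\Delta_{g^{-1}}(V,V^*)=(P_{\CC1}\otimes I_\cD)\oplus 0$. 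This settles (ii) at once: $\Delta_{g^{-1}}(V,V^*)=0$ iff $\cD=\{0\}$, and since $\cD=\overline{\Delta_{g^{-1}}(T,T^*)\cH}$, this holds iff $\Delta_{g^{-1}}(T,T^*)=0$.

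For (i) I would next compute $\sum_{\alpha}b_\alpha V_\alpha\Delta_{g^{-1}}(V,V^*)V_\alpha^*$. Inserting the block form just obtained and invoking the second identity of Proposition \ref{W}, namely $\sum_\alpha b_\alpha W_\alpha P_{\CC1}W_\alpha^*=I$, the $\cK_0$-block becomes $I_{\cK_0}$ and the $\cK_\pi$-block vanishes; thus $\sum_\alpha b_\alpha V_\alpha\Delta_{g^{-1}}(V,V^*)V_\alpha^*=P_{\cK_0}$, the orthogonal projection onto the pure summand. In particular $V$ is pure iff $P_{\cK_0}=I_\cK$, i.e.\ iff $\cK_\pi=\{0\}$.

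It then remains to tie the purity of $T$ to $\cK_\pi$. Using the coinvariance of $\cH$ (which gives $V_\alpha^*|_\cH=T_\alpha^*$ and $V_\alpha\cH^\perp\subseteq\cH^\perp$), I would compress the previous identity, pushing $P_\cH$ through the products and discarding the $\cH^\perp$ contributions, to get $\sum_\alpha b_\alpha T_\alpha\Delta_{g^{-1}}(T,T^*)T_\alpha^*=P_\cH P_{\cK_0}|_\cH$. Consequently $T$ is pure iff $P_\cH P_{\cK_0}|_\cH=I_\cH$, and since $P_{\cK_0}$ is a projection this is equivalent to $\cH\subseteq\cK_0$. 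As $\cK_0$ is reducing for $V$ and the dilation is minimal, $\cH\subseteq\cK_0$ forces $\cK=\bigvee_{\alpha\in\FF_n^+}V_\alpha\cH\subseteq\cK_0$, hence $\cK_\pi=\{0\}$; the reverse is immediate. Combining the equivalences yields $T$ pure $\iff\cK_\pi=\{0\}\iff V$ pure. The step requiring the most care is the compression identity, where coinvariance must be used carefully to kill the off-diagonal terms; alternatively this can be bypassed, since when $T$ is pure the canonical Berezin dilation ${\bf W}\otimes I_\cD$ is already a minimal dilation with trivial Cuntz part, so the uniqueness in Corollary \ref{unique3} forces $\cK_\pi=\{0\}$, while conversely Theorem \ref{model} delivers purity of $T$ whenever $\cK_\pi=\{0\}$.
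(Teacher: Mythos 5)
Your proposal is correct and follows essentially the same route as the paper: block-diagonal computation of $\Delta_{g^{-1}}(V,V^*)=(P_{\CC 1}\otimes I_\cD)\oplus 0$ for part (ii), and for part (i) the identity $\sum_\alpha b_\alpha V_\alpha\Delta_{g^{-1}}(V,V^*)V_\alpha^*=P_{F^2(H_n)\otimes\cD}$, compression to $\cH$ via coinvariance, the equivalence of purity of $T$ with $\cH\subseteq F^2(H_n)\otimes\cD$, and minimality plus the reducing property of $F^2(H_n)\otimes\cD$ to force $\cK=F^2(H_n)\otimes\cD$. The alternative closing argument you sketch (uniqueness of the minimal dilation from Corollary \ref{unique3} combined with the canonical Berezin dilation) is a valid shortcut the paper does not use, but the core of your argument coincides with the paper's proof.
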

\begin{proof}
Note that
\begin{equation*}
\begin{split}
\sum_{\alpha \in \FF_n^+} b_\alpha T_\alpha \Delta_{g^{-1}}(T,T^*) T_\alpha^*
&=
P_\cH\sum_{\alpha \in \FF_n^+} b_\alpha V_\alpha \Delta_{g^{-1}}(V, V^*) V_\alpha^*|_\cH=P_\cH \left[\begin{matrix}
 I_{F^2(H_n)\otimes \cD}&0\\0& 0\end{matrix}\right]|\cH.
\end{split}
\end{equation*}
Consequently,  $T$ is  a pure $n$-tuple 
 if and only if $P_\cH P_{F^2(H_n)\otimes \cD} |\cH=I_\cK$. The later
condition is equivalent to    $\cH\subset F^2(H_n)\otimes \cD $. On the
other hand, since $F^2(H_n)\otimes \cD $ is
reducing for  $V_1,\ldots, V_n$, and $ \cK$ is the smallest
reducing subspace for   $V_1,\ldots, V_n$, which contains $\cH$, we
must have $\cK=F^2(H_n)\otimes \cD $.
 To prove item (ii), note that
 $$ \Delta_{g^{-1}}(V,V^*)=
\left[\begin{matrix} \Delta_{g^{-1}}({\bf W},{\bf W}^*)\otimes I_\cD&0\\0& 0\end{matrix}\right]
$$
and, consequently,  $ \Delta_{g^{-1}}(V,V^*)=0$ if and only  if 
$\Delta_{g^{-1}}({\bf W},{\bf W}^*)\otimes I_\cD=0$. 
Since  $\Delta_{g^{-1}}({\bf W},{\bf W}^*)=P_{\CC1}$, the later relation is equivalent to $\cD=\{0\}$.
Therefore, $ \Delta_{g^{-1}}(V,V^*)=0$ if and only  if $\Delta_{g^{-1}}(T,T^*)=0$. This   completes the proof.
\end{proof}

\bigskip

\section{Commutant lifting  and Toeplitz-corona theorems}

The goal of this section is to prove a commutant lifting theorem  and use it to provide factorization results for multi-analytic operators and a Toeplitz-corona theorem in our setting.

Let $f$ and $g$ be two admissible free holomorphic functions and let 
$${\bf W}^{(f)}:=(W_1^{(f)},\ldots, W_n^{(f)})\quad  \text{ and } \quad  {\bf W}^{(g)}:=(W_1^{(g)},\ldots, W_n^{(g)})
$$
 be the universal models associated with the noncommuative domains $\cD_{f^{-1}}$ and $\cD_{g^{-1}}$, respectively. 
We assume that $\cD^{pure}_{f^{-1}}\subset \cD^{pure}_{g^{-1}}$.
 Similarly to the proof of  Theorem \ref{inclusion}, one can show that  given  a Hilbert space $\cE_*$,
  there is a Hilbert space $\cF$ and an isometry $V:F^2(H_n)\otimes \cE_*\to F^2(H_n)\otimes \cF$ such that 
\begin{equation}\label{V}
V(W_i^{(f)*}\otimes I_{\cE_*})=(W_i^{(g) *}\otimes I_\cF)V,\qquad i\in \{1,\ldots, n\}.
\end{equation}
We need this result in what follows.
\begin{proposition} \label{reduction}
Let $\cG_1\subset F^2(H_n)\otimes \cE$ be a co-invariant subspace under  each operator 
$W_i^{(g) }\otimes I_\cE$ and let  $\cG_2\subset F^2(H_n)\otimes \cE_*$ be a co-invariant subspace under  each operator 
$W_i^{(f) }\otimes I_{\cE_*}$. Let $X\in B(\cG_1,\cG_2)$ and define $\tilde X:=(V|_{\cG_2})X:\cG_1\to V(\cG_2)$.  

If the $n$-tuples  $T=(T_1,\ldots, T_n)$ and  $T'=(T_1',\ldots, T_n')$ are defined by
$$
T_i:=P_{\cG_2}(W_i^{(f)}\otimes I_{\cE_*})|_{\cG_2} \ \text{ and }\  T_i':=P_{V(\cG_2)}(W_i^{(g)}\otimes I_{\cF})|_{V(\cG_2)}, 
$$
then the following statements hold.
\begin{enumerate}

\item[(i)]  $ X[P_{\cG_1}(W_i^{(g)}\otimes I_{\cE})|_{\cG_1}]=T_iX$ for every $i\in \{1,\ldots, n\}$ if and only if 
$$ \tilde X[P_{\cG_1}(W_i^{(g)}\otimes I_{\cE})|_{\cG_1}]=T_i' \tilde X\quad \text{ for every }\quad  i\in \{1,\ldots, n\}.
$$

\item[(ii)] $\|X\|\leq 1$ if and only if $\|\tilde X\|\leq 1$.
\end{enumerate}
\end{proposition}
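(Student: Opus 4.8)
The plan is to reduce both assertions to a single structural fact: the restriction $W:=V|_{\cG_2}\colon \cG_2\to V(\cG_2)$ is a unitary operator that intertwines the two compressed $n$-tuples $T=(T_1,\dots,T_n)$ and $T'=(T_1',\dots,T_n')$. Writing $\tilde X=WX$, both (i) and (ii) then follow at once. Throughout I set $N_i:=P_{\cG_1}(W_i^{(g)}\otimes I_{\cE})|_{\cG_1}$ for the compression appearing in (i).

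First I would record that $V(\cG_2)$ is coinvariant under each $W_i^{(g)}\otimes I_{\cF}$. Indeed, for $v=Vh$ with $h\in\cG_2$, relation \eqref{V} gives
$$(W_i^{(g)*}\otimes I_{\cF})v=V(W_i^{(f)*}\otimes I_{\cE_*})h,$$
and since $\cG_2$ is coinvariant under $W_i^{(f)}\otimes I_{\cE_*}$ the vector $(W_i^{(f)*}\otimes I_{\cE_*})h$ lies in $\cG_2$, whence $(W_i^{(g)*}\otimes I_{\cF})v\in V(\cG_2)$. Thus $T_i'$ is well defined on $V(\cG_2)$, and because both $\cG_2$ and $V(\cG_2)$ are coinvariant, the adjoints are plain restrictions: $T_i^*=(W_i^{(f)*}\otimes I_{\cE_*})|_{\cG_2}$ and $T_i'^*=(W_i^{(g)*}\otimes I_{\cF})|_{V(\cG_2)}$.

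The main step is the intertwining. As $V$ is an isometry, $W=V|_{\cG_2}$ is a unitary of $\cG_2$ onto $V(\cG_2)$, and for $h\in\cG_2$ the computation
$$T_i'^*Wh=(W_i^{(g)*}\otimes I_{\cF})Vh=V(W_i^{(f)*}\otimes I_{\cE_*})h=VT_i^*h=WT_i^*h$$
shows $WT_i^*=T_i'^*W$; taking adjoints and conjugating by the unitary $W$ yields $WT_i=T_i'W$ for every $i$. I expect this to be the only genuine content of the proof; the single point needing care is the direction in which coinvariance is invoked, so that the compressions really become honest restrictions on the adjoint side.

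Finally, to conclude, I would use $\tilde X=WX$ with $W$ unitary onto its range. For (i): if $XN_i=T_iX$, multiplying on the left by $W$ and using $WT_i=T_i'W$ gives $\tilde XN_i=T_i'\tilde X$; conversely, multiplying $\tilde XN_i=T_i'\tilde X$ on the left by $W^*=W^{-1}$ and using $W^*\tilde X=X$ together with $W^*T_i'W=T_i$ recovers $XN_i=T_iX$. For (ii): since $W$ is isometric, $\|\tilde X\|=\|WX\|=\|X\|$, so $\|X\|\le 1$ if and only if $\|\tilde X\|\le 1$. This completes the plan.
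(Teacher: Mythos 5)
Your proposal is correct and takes essentially the same route as the paper: both proofs come down to the single intertwining fact $(V|_{\cG_2})T_i=T_i'(V|_{\cG_2})$, derived from relation \eqref{V} and the coinvariance of $\cG_2$, combined with the observation that $V|_{\cG_2}$ is an isometry (hence unitary onto $V(\cG_2)$), after which (i) follows by multiplying by $V|_{\cG_2}$ and its adjoint and (ii) is immediate from $\|\tilde X\|=\|X\|$. The only presentational difference is that you work on the adjoint side, first checking that $V(\cG_2)$ is coinvariant so the compressions $T_i^*$, $T_i'^*$ become honest restrictions, whereas the paper manipulates the compressions directly; this is a cosmetic, not substantive, divergence.
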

\begin{proof}
Since $\cG_2\subset F^2(H_n)\otimes \cE_*$ is a co-invariant subspace under  each operator 
$W_i^{(f) }\otimes I_{\cE_*}$ and  $V|_{\cG_2}$  is an isometry, relation \eqref{V} implies 
$$
(W_i^{(f)*}\otimes I_{\cE_*})|_{\cG_2}=(V|_{\cG_2})^*(W_i^{(g) *}\otimes I_\cF)(V|_{\cG_2}),\qquad i\in \{1,\ldots, n\}.
$$
Taking the adjoint of the later relation and multiplying to the left by $V|_{\cG_2}$, we obtain
$$
(V|_{\cG_2})(W_i^{(f)}\otimes I_{\cE_*})|_{\cG_2}=P_{V(\cG_2)}(W_i^{(g)}\otimes I_\cF)(V|_{\cG_2}).
$$
Consequently, 
\begin{equation}\label{new}
P_{V(\cG_2)}(W_i^{(g)}\otimes I_\cF)(V|_{\cG_2})\tilde X=(V|_{\cG_2})P_{\cG_2}(W_i^{(f)}\otimes I_{\cE_*})|_{\cG_2}X.
\end{equation}
On the other hand, we have
$$
\tilde X [P_{\cG_1}(W_i^{(g)}\otimes I_{\cE})|_{\cG_1}]=(V|_{\cG_2})X [P_{\cG_1}(W_i^{(g)}\otimes I_{\cE})|_{\cG_1}].
$$
Note that if $ X[P_{\cG_1}(W_i^{(g)}\otimes I_{\cE})|_{\cG_1}]=T_iX$, then relation \eqref{new} implies 
$\tilde X[P_{\cG_1}(W_i^{(g)}\otimes I_{\cE})|_{\cG_1}]=T_i' \tilde X.$
Conversely, if the later relation holds,  we can use again relation \eqref{new} to deduce that
$$
(V|_{\cG_2})X [P_{\cG_1}(W_i^{(g)}\otimes I_{\cE})|_{\cG_1}]=P_{V(\cG_2)}(W_i^{(g)}\otimes I_\cF)(V|_{\cG_2})\tilde X
=(V|_{\cG_2}) [P_{\cG_2}(W_i^{(f)}\otimes I_{\cE_*})|_{\cG_2}] X.
$$
Since $X\in B(\cG_1,\cG_2)$ and $V|_{\cG_2}$  is an isometry, we obtain
$$
X [P_{\cG_1}(W_i^{(g)}\otimes I_{\cE})|_{\cG_1}]=[P_{\cG_2}(W_i^{(f)}\otimes I_{\cE_*})|_{\cG_2}] X,
$$
which completes the proof of part (i).   Since $\tilde X:=(V|_{\cG_2})$, part (ii) is obvious. The proof is complete.
\end{proof}

In \cite{Po-Noncommutative domains}, we introduced the noncommutative Hardy algebras $F^\infty(g)$  and $R^\infty(g)$ associated with formal power series $g$ with strictly positive coefficients  and provided a $w^*$-continuous $F^\infty(g)$-functional calculus for pure (resp. completely non-coisometric) $n$-tuples of operators in the noncommutative domain $\cD_{g^{-1}}(\cH)$.  We recall that  the noncommutative  Hardy space $F^\infty(g)$  satisfies the relation
  $$
  F^\infty(g)=\overline{\cP({\bf W})}^{SOT}=\overline{\cP({\bf W})}^{WOT}=\overline{\cP({\bf W})}^{w*},
  $$
where $\cP({\bf W})$ stands for the algebra of all polynomials in $W_1,\ldots, W_n$ and the identity.
Moreover,  $F^\infty(g)$ is the sequential SOT-(resp. WOT-, w*-) closure of   $\cP({\bf W})$.
Similar results hold for  the noncommutative Hardy algebra  $R^\infty(g)$. In addition, we have  $R^\infty(g)=\{ W_1,\ldots, W_n\}^\prime=F^\infty(g)^\prime$.

An operator     $A:F^2(H_n)\otimes \cE\to F^2(H_n)\otimes \cF$   is called  multi-analytic operators  with respect to   the universal models 
$${\bf W}^{(f)}:=(W_1^{(f)},\ldots, W_n^{(f)})\quad \text{ and  } \quad  {\bf W}^{(g)}:=(W_1^{(g)},\ldots, W_n^{(g)})
$$
if 
  $$
A (W_i^{(g)}\otimes I_{\cE})=(W_i^{(f)}\otimes I_{\cF})A,\qquad  i\in \{1,\ldots, n\}.  
 $$

In what follows, we provide a commutant lifting theorem in our setting.

\begin{theorem} \label{commutant}
Let $f$ and $g$ be two admissible free holomorphic functions and let 
$${\bf W}^{(f)}:=(W_1^{(f)},\ldots, W_n^{(f)})\quad  \text{ and } \quad  {\bf W}^{(g)}:=(W_1^{(g)},\ldots, W_n^{(g)})
$$
 be the associated  universal models.
We assume that  $\cD_{g^{-1}}$ is a regular domain and $\cD^{pure}_{f^{-1}}\subset \cD^{pure}_{g^{-1}}$.
Let $\cG_1\subset F^2(H_n)\otimes \cE$ be a co-invariant subspace under  each operator 
$W_i^{(g) }\otimes I_\cE$ and let  $\cG_2\subset F^2(H_n)\otimes \cE_*$ be a co-invariant subspace under  each operator 
$W_i^{(f) }\otimes I_{\cE_*}$.  

If  $X\in B(\cG_1,\cG_2)$ is a contraction such that
$$
X[P_{\cG_1}(W_i^{(g)}\otimes I_{\cE})|_{\cG_1}]=[P_{\cG_2}(W_i^{(f)}\otimes I_{\cE_*})|_{\cG_2}]X,\qquad i\in \{1,\ldots, n\},
$$
then there is a contractive operator $\Gamma: F^2(H_n)\otimes \cE\to F^2(H_n)\otimes \cE_*$ such that
$$
\Gamma (W_i^{(g)}\otimes I_{\cE})=(W_i^{(f)}\otimes I_{\cE_*})\Gamma, \qquad  i\in \{1,\ldots, n\},
$$
 $X^*=\Gamma^*|_{\cG_2}$, and $\|\Gamma\|=\|X\|$. Moreover, there is a Hilbert space $\cF$ and a contractive multi-analytic operator $\Phi\in R^\infty(g)\bar\otimes_{min} B(\cE,\cF)$ and a co-isometric multi-analytic operator $F: F^2(H_n)\otimes \cF\to  F^2(H_n)\otimes \cE_*$ such that
$$
F (W_i^{(g)}\otimes I_{\cF})=(W_i^{(f)}\otimes I_{\cE_*})F, \qquad  i\in \{1,\ldots, n\},
$$
and $\Gamma=F\Phi$.
\end{theorem}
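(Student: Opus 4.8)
The plan is to prove the theorem by collapsing the two different universal models into a single one, where a commutant lifting statement can be invoked, and then to transfer the conclusion back through the isometry $V$ of \eqref{V}; the factorization $\Gamma=F\Phi$ will be read off directly from that construction. First I would apply Proposition \ref{reduction} to the given subspaces. Setting $\tilde X:=(V|_{\cG_2})X:\cG_1\to V(\cG_2)$, part (i) of that proposition converts the hypothesis $X[P_{\cG_1}(W_i^{(g)}\otimes I_\cE)|_{\cG_1}]=[P_{\cG_2}(W_i^{(f)}\otimes I_{\cE_*})|_{\cG_2}]X$ into the assertion that $\tilde X$ intertwines the compressions of the \emph{single} model ${\bf W}^{(g)}$ to the co-invariant subspaces $\cG_1\subset F^2(H_n)\otimes\cE$ and $V(\cG_2)\subset F^2(H_n)\otimes\cF$, while part (ii) gives $\|\tilde X\|=\|X\|$. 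Thus it suffices to produce a contraction $\Gamma'':F^2(H_n)\otimes\cE\to F^2(H_n)\otimes\cF$ with $\Gamma''(W_i^{(g)}\otimes I_\cE)=(W_i^{(g)}\otimes I_\cF)\Gamma''$, with $(\Gamma'')^*|_{V(\cG_2)}=\tilde X^*$, and with $\|\Gamma''\|=\|\tilde X\|$.

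The heart of the matter is this single-model commutant lifting. Here both compressions are pure $n$-tuples in $\cD_{g^{-1}}$: by Proposition \ref{W} the full models $W_i^{(g)}\otimes I$ are pure, hence so are their compressions to co-invariant subspaces, and $W_i^{(g)}\otimes I$ is the corresponding coextension. I would establish the lifting by a second reduction, this time to the left creation operators. Since $\cD_{g^{-1}}$ is regular, Proposition \ref{inclusions} gives $\cD^{pure}_{g^{-1}}\subset\cD^{pure}_{g_2^{-1}}$, where $g_2^{-1}=1-\tfrac1d(Z_1+\cdots+Z_n)$ has universal model $(\sqrt d\,S_1,\ldots,\sqrt d\,S_n)$; the operator-valued intertwining isometries furnished as in \eqref{V} (whose existence rests on Theorem \ref{inclusion}) then carry $\cG_1$ and $V(\cG_2)$ to co-invariant subspaces for the scaled left creation operators. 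On the full Fock space this is precisely the setting of the classical noncommutative commutant lifting theorem for $S_1,\ldots,S_n$, which supplies a contractive intertwining lifting; pulling it back through those isometries by the same adjoint computation used in the next step yields $\Gamma''$. I expect this step — and in particular checking that the lifting respects the co-invariant subspaces, so that $(\Gamma'')^*|_{V(\cG_2)}=\tilde X^*$ holds as an equality of operators \emph{into} $\cG_1$ rather than merely after compression — to be the main obstacle.

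With $\Gamma''$ in hand I would set $\Gamma:=V^*\Gamma'':F^2(H_n)\otimes\cE\to F^2(H_n)\otimes\cE_*$. Taking adjoints in \eqref{V} gives $V^*(W_i^{(g)}\otimes I_\cF)=(W_i^{(f)}\otimes I_{\cE_*})V^*$, so that $\Gamma(W_i^{(g)}\otimes I_\cE)=V^*(W_i^{(g)}\otimes I_\cF)\Gamma''=(W_i^{(f)}\otimes I_{\cE_*})\Gamma$, i.e. $\Gamma$ is multi-analytic with respect to ${\bf W}^{(g)}$ and ${\bf W}^{(f)}$. For $y\in\cG_2$ one has $Vy\in V(\cG_2)$ and $(V|_{\cG_2})^*Vy=y$, whence $\Gamma^*y=(\Gamma'')^*Vy=\tilde X^*Vy=X^*(V|_{\cG_2})^*Vy=X^*y$, proving $X^*=\Gamma^*|_{\cG_2}$. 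Finally $\|\Gamma\|\le\|\Gamma''\|=\|X\|$, while the relation $X^*=\Gamma^*|_{\cG_2}$ forces $\|X\|=\|X^*\|\le\|\Gamma^*\|=\|\Gamma\|$, so $\|\Gamma\|=\|X\|$.

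The factorization $\Gamma=F\Phi$ is then immediate from the construction, and simply records its two layers. I would take $F:=V^*:F^2(H_n)\otimes\cF\to F^2(H_n)\otimes\cE_*$, which is co-isometric because $FF^*=V^*V=I$ and which, by the adjoint of \eqref{V}, satisfies $F(W_i^{(g)}\otimes I_\cF)=(W_i^{(f)}\otimes I_{\cE_*})F$; and $\Phi:=\Gamma''$, which intertwines $W_i^{(g)}\otimes I_\cE$ and $W_i^{(g)}\otimes I_\cF$ and is therefore a contractive multi-analytic operator belonging to $R^\infty(g)\bar\otimes_{min}B(\cE,\cF)$ (here one uses $R^\infty(g)=\{W_1^{(g)},\ldots,W_n^{(g)}\}'$). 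Since $F\Phi=V^*\Gamma''=\Gamma$, this completes the argument.
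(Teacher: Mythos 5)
Your proposal follows the paper's proof in its first and last steps: the paper also begins by forming $\tilde X:=(V|_{\cG_2})X$ and invoking Proposition \ref{reduction}, and it also ends by taking $F:=V^*$, $\Gamma:=F\Phi$ and computing $\Gamma^*|_{\cG_2}=\Phi^*V|_{\cG_2}=\tilde X^*V|_{\cG_2}=X^*$. The divergence --- and the gap --- is in the middle step, which is the heart of the theorem. The paper does not prove the single-model lifting at all: it cites Theorem 4.2 of \cite{Po-domains} (the commutant lifting theorem for regular domains), which directly yields a contraction $\Phi\in R^\infty(g)\bar\otimes_{min}B(\cE,\cF)$ with $\Phi^*|_{V(\cG_2)}=\tilde X^*$ and $\|\Phi\|=\|\tilde X\|$. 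You instead try to manufacture this lifting from the classical commutant lifting theorem for $S_1,\ldots,S_n$ by a second change of model (Proposition \ref{inclusions} plus isometries as in \eqref{V}), and that reduction breaks down at exactly the point where you would need it.

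Concretely, let $V_1:F^2(H_n)\otimes\cE\to F^2(H_n)\otimes\cF'$ and $V_2:F^2(H_n)\otimes\cF\to F^2(H_n)\otimes\cF''$ be the isometries with $V_j(W_i^{(g)*}\otimes I)=(\sqrt{d}\,S_i^*\otimes I)V_j$, and let $\Theta$ be the lifting supplied by the classical theorem for the transported data. These isometries intertwine only the \emph{adjoints}; equivalently $(W_i^{(g)}\otimes I)V_j^*=V_j^*(\sqrt{d}\,S_i\otimes I)$. Hence postcomposition with $V_2^*$ preserves forward intertwining --- which is precisely why your (and the paper's) final step $\Gamma:=V^*\Phi$ works --- but precomposition with $V_1$ does not. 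Indeed, $V_1(W_i^{(g)}\otimes I_\cE)=V_1V_1^*(\sqrt{d}\,S_i\otimes I_{\cF'})V_1$, so for the natural pullback $\Gamma'':=V_2^*\Theta V_1$ one gets
\begin{equation*}
(W_i^{(g)}\otimes I_\cF)\Gamma''-\Gamma''(W_i^{(g)}\otimes I_\cE)
=V_2^*\,\Theta\,(I-V_1V_1^*)(\sqrt{d}\,S_i\otimes I_{\cF'})V_1 ,
\end{equation*}
and since the range of $V_1$ is co-invariant but in general \emph{not} invariant under $S_i\otimes I_{\cF'}$, this difference need not vanish for a generic lifting $\Theta$. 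So $\Gamma''$ fails to be multi-analytic with respect to ${\bf W}^{(g)}$; note that the obstacle is not the one you flagged (the restriction identity $(\Gamma'')^*|_{V(\cG_2)}=\tilde X^*$ does come out correctly), but the intertwining relation itself. A second, independent issue is that even with a bounded intertwiner in hand you would still have to show it lies in $R^\infty(g)\bar\otimes_{min}B(\cE,\cF)$, which is part of what Theorem 4.2 of \cite{Po-domains} delivers. In short, the commutant lifting theorem for the weighted model of a regular domain is a genuinely separate result, proved in \cite{Po-domains} by an argument adapted to the weighted creation operators; it cannot be obtained by transporting the free lifting through the co-invariance intertwiners, and without it your proof of the middle step is missing.
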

\begin{proof} 
Define $\tilde X:=(V|_{\cG_2})X:\cG_1\to V(\cG_2)$.  Due to Proposition  \ref{reduction}, we have 
$$ \tilde X[P_{\cG_1}(W_i^{(g)}\otimes I_{\cE})|_{\cG_1}]=P_{V(\cG_2)}(W_i^{(g)}\otimes I_{\cF})|_{V(\cG_2)} \tilde X,\qquad i\in \{1,\ldots, n\}.
$$
Since $\cD_{g^{-1}}$ is a regular domain, we can apply Theorem 4.2 from \cite{Po-domains} to find  a contractive operator
$\Phi\in R^\infty(g)\bar\otimes_{min} B(\cE,\cF)$  such that $\Phi^*|_{V(\cG_2)}=\tilde X^*$ and 
$\|\Phi\|=\|\tilde X\|=\|X\|$. 
Setting $\Gamma:=F\Phi$, where $F:=V^*$, we obtain 
$$
\Gamma^*|_{\cG_2}=\Phi^*V|_{\cG_2}=\tilde X^*V|_{\cG_2}=X^*P_{\cG_2}V^*|_{V(\cG_2)}V|_{\cG_2}=X^*
$$
The proof is complete.
\end{proof}

\begin{corollary} Let $f$ and $g$ be two admissible free holomorphic functions and let 
$${\bf W}^{(f)}:=(W_1^{(f)},\ldots, W_n^{(f)})\quad  \text{ and } \quad  {\bf W}^{(g)}:=(W_1^{(g)},\ldots, W_n^{(g)})
$$
 be the associated  universal models.
We assume that  $\cD_{g^{-1}}$ is a regular domain and $\cD^{pure}_{f^{-1}}\subset \cD^{pure}_{g^{-1}}$.

If $A:F^2(H_n)\otimes \cE\to F^2(H_n)\otimes \cE_*$ is a contractive operator  such that
$$
A (W_i^{(g)}\otimes I_{\cE})=(W_i^{(f)}\otimes I_{\cE_*})A, \qquad  i\in \{1,\ldots, n\},
$$
then A admits the factorization $A=F\Phi$ with the following properties.
\begin{enumerate}
\item[(i)]  $F: F^2(H_n)\otimes \cF\to  F^2(H_n)\otimes \cE_*$ is co-isometric and
$$
F (W_i^{(g)}\otimes I_{\cF})=(W_i^{(f)}\otimes I_{\cE_*})F, \qquad  i\in \{1,\ldots, n\}.
$$
\item[(ii)]  $\Phi\in R^\infty(g)\bar\otimes_{min} B(\cE,\cF)$  is a contractive multi-analytic operator. 
\end{enumerate}
Conversely, if $F,\Phi$ satisfy  conditions (i) and (ii), then $A:=F\Phi$ is a contractive operator   such that 
$$
A (W_i^{(g)}\otimes I_{\cE})=(W_i^{(f)}\otimes I_{\cE_*})A, \qquad  i\in \{1,\ldots, n\}.
$$
\end{corollary}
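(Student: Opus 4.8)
The plan is to read the forward direction off Theorem \ref{commutant} by specializing its two co-invariant subspaces to the full tensor-product spaces, and to dispatch the converse by a one-line intertwining computation. First I would invoke Theorem \ref{commutant} with the choices $\cG_1 := F^2(H_n)\otimes\cE$ and $\cG_2 := F^2(H_n)\otimes\cE_*$. Each of these is reducing, hence in particular co-invariant, under the corresponding universal model, so the hypotheses imposed on $\cG_1,\cG_2$ are met. Because $P_{\cG_1}=I$ and $P_{\cG_2}=I$, the compressions $P_{\cG_1}(W_i^{(g)}\otimes I_\cE)|_{\cG_1}$ and $P_{\cG_2}(W_i^{(f)}\otimes I_{\cE_*})|_{\cG_2}$ collapse to $W_i^{(g)}\otimes I_\cE$ and $W_i^{(f)}\otimes I_{\cE_*}$. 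Taking $X:=A$, the intertwining hypothesis of the corollary is then literally the commutant condition demanded by Theorem \ref{commutant}, and $A$ is a contraction by assumption.

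Theorem \ref{commutant} thus yields a contraction $\Gamma:F^2(H_n)\otimes\cE\to F^2(H_n)\otimes\cE_*$, a Hilbert space $\cF$, a contractive multi-analytic $\Phi\in R^\infty(g)\bar\otimes_{min}B(\cE,\cF)$, and a co-isometric multi-analytic $F:F^2(H_n)\otimes\cF\to F^2(H_n)\otimes\cE_*$ with $\Gamma=F\Phi$ and $X^*=\Gamma^*|_{\cG_2}$. Since $\cG_2$ is all of $F^2(H_n)\otimes\cE_*$, the restriction is vacuous: $\Gamma^*|_{\cG_2}=\Gamma^*$, so $A^*=\Gamma^*$ and hence $A=\Gamma=F\Phi$. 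This is exactly the asserted factorization, with properties (i) and (ii) inherited verbatim from the conclusion of Theorem \ref{commutant}.

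For the converse I would start from $F,\Phi$ satisfying (i) and (ii) and simply compute. Contractivity is immediate: $F$ is co-isometric, hence a contraction, and $\Phi$ is contractive, so $\|A\|=\|F\Phi\|\le 1$. The intertwining relation follows by chaining the two given identities. Using that $\Phi$ is multi-analytic with respect to ${\bf W}^{(g)}$, namely $\Phi(W_i^{(g)}\otimes I_\cE)=(W_i^{(g)}\otimes I_\cF)\Phi$, together with the intertwining property of $F$ in (i), I obtain
$$A(W_i^{(g)}\otimes I_\cE)=F\Phi(W_i^{(g)}\otimes I_\cE)=F(W_i^{(g)}\otimes I_\cF)\Phi=(W_i^{(f)}\otimes I_{\cE_*})F\Phi=(W_i^{(f)}\otimes I_{\cE_*})A$$
for each $i\in\{1,\ldots,n\}$.

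There is no substantial obstacle here: the statement is \emph{genuinely} a corollary of the commutant lifting theorem. The only point requiring a moment's care is recognizing that the full-space instance of Theorem \ref{commutant} simultaneously removes the compressions from the commutant hypothesis and forces $\Gamma=A$, since the relation $X^*=\Gamma^*|_{\cG_2}$ degenerates when $\cG_2$ is the whole space. Everything else is bookkeeping with the definition of multi-analytic operator.
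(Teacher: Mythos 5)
Your proposal is correct and follows exactly the paper's route: the paper's proof consists of the single line ``Take $\cG_1=F^2(H_n)\otimes \cE$ and $\cG_2=F^2(H_n)\otimes \cE_*$ in Theorem \ref{commutant},'' which is precisely your specialization, and your converse computation (using that $\Phi\in R^\infty(g)\bar\otimes_{min}B(\cE,\cF)$ intertwines $W_i^{(g)}\otimes I_\cE$ with $W_i^{(g)}\otimes I_\cF$, together with property (i) of $F$) is the routine verification the paper leaves implicit.
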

\begin{proof} Take $\cG_1=F^2(H_n)\otimes \cE$ and $\cG_2=F^2(H_n)\otimes \cE_*$ in  Theorem 
\ref{commutant}.
\end{proof}

As applications of the commutant lifting  of Theorem \ref{commutant}, we provide  some factorization results for multi-analytic operators  and   a Toeplitz-corona theorem in our setting.

\begin{theorem} \label{corona1}
Let $f_1,f_2$ and $g$ be three admissible free holomorphic functions and let 
${\bf W}^{(f_1)}$,    ${\bf W}^{(f_2)}$, and ${\bf W}^{(g)}$ be the associated universal models, respectively.
  We assume that  $\cD_{g^{-1}}$ is a regular domain and $\cD^{pure}_{f_1^{-1}}\cup\cD^{pure}_{f_2^{-1}} \subset \cD^{pure}_{g^{-1}}$.
 
 Let $A:F^2(H_n)\otimes \cF\to F^2(H_n)\otimes \cF_1$   and  
 $B:F^2(H_n)\otimes \cF_2\to F^2(H_n)\otimes \cF_1$  be multi-analytic operators, i.e. 
  $$
A (W_i^{(g)}\otimes I_{\cF})=(W_i^{(f_1)}\otimes I_{\cF_1})A \ \text{ and } \  B (W_i^{(f_2)}\otimes I_{\cF_2})=(W_i^{(f_1)}\otimes I_{\cF_1})B
$$
for every $ i\in \{1,\ldots, n\}$. Then there is a contractive  operator $C:F^2(H_n)\otimes \cF\to F^2(H_n)\otimes \cF_2$ such that 
$$
C (W_i^{(g)}\otimes I_{\cF})=(W_i^{(f_2)}\otimes I_{\cF_2})C,\qquad  i\in \{1,\ldots, n\},
$$
 and $A=BC $ if and only if $AA^*\leq BB^*$. 
  \end{theorem}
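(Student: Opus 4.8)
The plan is to dispatch the two implications separately, with the forward direction ($AA^*\le BB^*\Rightarrow A=BC$) resting entirely on the commutant lifting theorem just established (Theorem \ref{commutant}). For necessity, suppose a contractive multi-analytic $C$ exists with $A=BC$ and $C(W_i^{(g)}\otimes I_\cF)=(W_i^{(f_2)}\otimes I_{\cF_2})C$. Then $AA^*=BCC^*B^*\le BB^*$, since $CC^*\le I$; this direction needs no further work.

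For sufficiency, assume $AA^*\le BB^*$. First I would introduce the subspaces
$$\cG_1:=\overline{\ran A^*}\subseteq F^2(H_n)\otimes\cF,\qquad \cG_2:=\overline{\ran B^*}\subseteq F^2(H_n)\otimes\cF_2.$$
Taking adjoints in the intertwining relations shows that $\ker A$ is invariant under each $W_i^{(g)}\otimes I_\cF$ and $\ker B$ under each $W_i^{(f_2)}\otimes I_{\cF_2}$; hence $\cG_1=(\ker A)^\perp$ is co-invariant under $W_i^{(g)}\otimes I_\cF$ and $\cG_2=(\ker B)^\perp$ is co-invariant under $W_i^{(f_2)}\otimes I_{\cF_2}$. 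This places us in the setting of Theorem \ref{commutant} with $\cE=\cF$, $\cE_*=\cF_2$, and $f=f_2$, invoking the regularity of $\cD_{g^{-1}}$ together with the hypothesis $\cD^{pure}_{f_2^{-1}}\subset\cD^{pure}_{g^{-1}}$.

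Next I would build the contraction to be lifted. The inequality $AA^*\le BB^*$ gives $\|A^*y\|\le\|B^*y\|$ for every $y\in F^2(H_n)\otimes\cF_1$, so the assignment $B^*y\mapsto A^*y$ is well defined and contractive on $\ran B^*$ and extends to a contraction $\cG_2\to\cG_1$ which I denote $X^*$; its adjoint $X\in B(\cG_1,\cG_2)$ is the operator to be lifted. The key step is to verify the compressed intertwining hypothesis of Theorem \ref{commutant}. Because $\cG_1$ and $\cG_2$ are co-invariant, the relevant compressions of the adjoint models are mere restrictions, so (taking adjoints in the condition of Theorem \ref{commutant}) it suffices to check on the dense set $\ran B^*$ that
$$(W_i^{(g)*}\otimes I_\cF)\,X^*=X^*\,(W_i^{(f_2)*}\otimes I_{\cF_2})|_{\cG_2}.$$
Evaluating both sides at $B^*y$ and using $(W_i^{(g)*}\otimes I_\cF)A^*=A^*(W_i^{(f_1)*}\otimes I_{\cF_1})$ and $(W_i^{(f_2)*}\otimes I_{\cF_2})B^*=B^*(W_i^{(f_1)*}\otimes I_{\cF_1})$, both sides collapse to $A^*(W_i^{(f_1)*}\otimes I_{\cF_1})y$, confirming the hypothesis.

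Finally, applying Theorem \ref{commutant} produces a contraction $C:F^2(H_n)\otimes\cF\to F^2(H_n)\otimes\cF_2$ with $C(W_i^{(g)}\otimes I_\cF)=(W_i^{(f_2)}\otimes I_{\cF_2})C$ and $C^*|_{\cG_2}=X^*$. Since $B^*y\in\cG_2$ for every $y$, this last identity gives $C^*B^*y=X^*B^*y=A^*y$, i.e. $C^*B^*=A^*$, equivalently $A=BC$, as required. I expect the main obstacle to be the intertwining verification at the model level, where one must track carefully which $W$-model acts on which tensor factor and exploit co-invariance to drop the compressions; everything else is Douglas-type well-definedness or a direct citation of the already-established commutant lifting theorem.
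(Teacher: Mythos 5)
Your proposal is correct and follows essentially the same route as the paper's proof: the easy direction via $CC^*\le I$, then for the converse a Douglas-type definition of the contraction $B^*y\mapsto A^*y$ on $\overline{B^*(F^2(H_n)\otimes\cF_1)}$, verification of the compressed intertwining through the multi-analyticity of $A$ and $B$ together with co-invariance, and an application of Theorem \ref{commutant} to produce $C$ with $C^*B^*=A^*$. The only (immaterial) difference is that you take $\cG_1=\overline{\operatorname{Ran} A^*}$ as the co-invariant subspace on the domain side, whereas the paper simply uses all of $F^2(H_n)\otimes\cF$; both choices satisfy the hypotheses of the commutant lifting theorem.
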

\begin{proof} The direct implication is obvious. To prove the converse, assume that  $AA^*\leq BB^*$.
Let $\cM:=\overline{B^*(F^2(H_n)\otimes \cF_1)}$ and define the map $X:\cM\to F^2(H_n)\otimes \cF$  by setting
$XB^*x:=A^*x$ for $x\in F^2(H_n)\otimes \cF$. 
 Since $\|A^*x\|\leq \|B^*x\|$, the operator $X$ is well-defined and has a unique extension, also denoted by $X$, to a bounded operator on $\cM$. Thus $XB^*=A^*$.
Since  
$B (W_i^{(f_2)}\otimes I_{\cF_2})=(W_i^{(f_1)}\otimes I_{\cF_1})B$
for every $ i\in \{1,\ldots, n\}$,  the subspace $\cM$ is invariant under   each operator $W_i^{(f_2) *}\otimes I_{\cF_2}$. Due to the analyticity  of $A$ and $B$ and using relation $XB^*=A^*$, we obtain
\begin{equation*}
\begin{split}
X(W_i^{(f_2) *}\otimes I_{\cF_2})B^*&=XB^*(W_i^{(f_1)*}\otimes I_{\cF_1})=A^*(W_i^{(f_1)*}\otimes I_{\cF_1})\\
&= (W_i^{(g)*}\otimes I_{\cF}) A^*= (W_i^{(g)*}\otimes I_{\cF})XB^*.
\end{split}
\end{equation*}
 Therefore,  $X(W_i^{(f_2) *}\otimes I_{\cF_2})|_\cM=(W_i^{(g)*}\otimes I_{\cF})$ for every $ i\in \{1,\ldots, n\}$, which  implies 
 $$
 \left[P_\cM(W_i^{(f_2)}\otimes I_{\cF_2})|_\cM\right] X^*=X^*(W_i^{(g)}\otimes I_{\cF}),\qquad i\in \{1,\ldots, n\}.
 $$
 Applying the commutant lifting  of Theorem  \ref{commutant}, we find  a contraction 
 $C:F^2(H_n)\otimes \cF\to F^2(H_n)\otimes \cF_2$  such that
$$
C (W_i^{(g)}\otimes I_{\cF})=(W_i^{(f_2)}\otimes I_{\cF_2})C, \qquad  i\in \{1,\ldots, n\},
$$
$\|C\|=\|X\|$, and $C^*|_\cM=X$. Since  $\cM:=\overline{B^*(F^2(H_n)\otimes \cF_1)}$,  we deduce that
$
A=BX^*=BP_\cM C=BC.
$
 The proof is complete.
 \end{proof}

\begin{corollary}  \label{multi2} Let $E:F^2(H_n)\otimes \cF\to F^2(H_n)\otimes \cF_1$ be an operator such that
$$
E (W_i^{(g)}\otimes I_{\cF})=(W_i^{(f_1)}\otimes I_{\cF_1})E,\qquad i\in \{1,\ldots, n\}.
$$ 
Then $E$ admits a right inverse if and only if  there is  an operator
$F:F^2(H_n)\otimes \cF_1\to F^2(H_n)\otimes \cF$  such that
$$
F (W_i^{(f_1)}\otimes I_{\cF_1})=(W_i^{(g)}\otimes I_{\cF})F,\qquad  i\in \{1,\ldots, n\}.
$$ 
and $EF=I$.
\end{corollary}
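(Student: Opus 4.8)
The reverse implication is immediate: if such an $F$ exists, then $EF=I$ exhibits $F$ as a right inverse of $E$. The plan for the forward implication is to encode right-invertibility as a corona condition and then apply the Toeplitz-corona factorization of Theorem~\ref{corona1} to factor the identity operator through $E$.

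First I would record the standard functional-analytic fact that $E$ admits a (bounded) right inverse if and only if $E$ is surjective, which in turn is equivalent to $E^*$ being bounded below, i.e. there is $\delta>0$ with $\|E^*y\|\geq \delta\|y\|$ for all $y\in F^2(H_n)\otimes\cF_1$; equivalently $EE^*\geq \delta^2 I_{F^2(H_n)\otimes\cF_1}$. This is the only genuine analytic input, and it is the step that turns the hypothesis into an operator inequality amenable to the corona machinery.

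Next I would apply Theorem~\ref{corona1} with the two multi-analytic operators $A:=I_{F^2(H_n)\otimes\cF_1}$ and $B:=\delta^{-1}E$. Here $A$ is trivially multi-analytic, since $I(W_i^{(f_1)}\otimes I_{\cF_1})=(W_i^{(f_1)}\otimes I_{\cF_1})I$, and $B$ is multi-analytic by the hypothesis on $E$; thus the roles $(f_1,f_2,g)$ of Theorem~\ref{corona1} are taken to be $(f_1,g,f_1)$, for which the regularity and pure-inclusion hypotheses are assumed to be in force. The corona inequality reads $AA^*=I\leq \delta^{-2}EE^*=BB^*$, which holds by the previous step. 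Theorem~\ref{corona1} then yields a contractive multi-analytic operator $C:F^2(H_n)\otimes\cF_1\to F^2(H_n)\otimes\cF$ with $C(W_i^{(f_1)}\otimes I_{\cF_1})=(W_i^{(g)}\otimes I_{\cF})C$ and $A=BC$, that is $I=\delta^{-1}EC$.

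Finally I would set $F:=\delta^{-1}C$. Then $F$ satisfies the required intertwining relation $F(W_i^{(f_1)}\otimes I_{\cF_1})=(W_i^{(g)}\otimes I_{\cF})F$, being a scalar multiple of $C$, and $EF=\delta^{-1}EC=I$, which completes the proof. The main obstacle is purely bookkeeping: matching the directions of the intertwining relations and the labelling of the admissible functions so that Theorem~\ref{corona1} applies with $A$ equal to the identity. Once the corona condition $EE^*\geq\delta^2 I$ is in hand, the factorization of the identity is exactly what the Toeplitz-corona theorem provides.
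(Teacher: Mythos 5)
Your proof is correct and takes essentially the same approach as the paper: right-invertibility is converted into the corona inequality $EE^*\geq \delta^2 I$, and Theorem \ref{corona1} is then applied to a scalar multiple of the identity together with $E$ (with the same assignment of roles to the admissible functions), after which rescaling the resulting contraction yields the multi-analytic right inverse $F$. The only cosmetic difference is the normalization --- the paper scales the identity (taking $G=\sqrt{\delta}\,I$ and $B=E$) whereas you scale $E$ (taking $A=I$ and $B=\delta^{-1}E$) --- which is immaterial.
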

\begin{proof} We only have to prove the direct implication. If $A$ has a right inverse then  there is $\delta>0$ such that  $\|E^*x\|\geq \|G^*x\|$ for  $x\in  F^2(H_n)\otimes \cF_1$, where $G:=\sqrt{\delta} I$. Applying Theorem \ref{corona1}, we find a contraction $D:F^2(H_n)\otimes \cF_1\to F^2(H_n)\otimes \cF$ such that
$$
D (W_i^{(f_1)}\otimes I_{\cF_1})=(W_i^{(g)}\otimes I_{\cF})D,\qquad  i\in \{1,\ldots, n\},
$$ 
and $G=ED$. Setting $F:=\frac{1}{\sqrt{\delta}}D$, we obtain $EF=I$ and $\|F\|\leq \frac{1}{\sqrt{\delta}}$.
\end{proof}

\begin{theorem} \label{corona2} Let $f_1$ and $g$ be two admissible free holomorphic functions and let 
${\bf W}^{(f_1)}$ and ${\bf W}^{(g)}$ be the associated universal models, respectively.
  We assume that  $\cD_{g}$ is a regular domain and $\cD^{pure}_{f_1^{-1}}  \subset \cD^{pure}_{g}$.
 Let $\Phi_k:F^2(H_n)\otimes \cF\to F^2(H_n)\otimes \cF_1$, $k\in\{1,\ldots m\}$,  be  multi-analytic operators such that
$$
\Phi_k (W_i^{(g)}\otimes I_{\cF})=(W_i^{(f_1)}\otimes I_{\cF_1})\Phi_k,\qquad i\in \{1,\ldots, n\}, k\in\{1,\ldots m\}.
$$ 
Then the following statements are equivalent.
\begin{enumerate}
\item[(i)]  There exist operators  $\Psi_k:F^2(H_n)\otimes \cF_1\to F^2(H_n)\otimes \cF$, $k\in\{1,\ldots m\}$,   such that
$$
\Psi_k (W_i^{(f_1)}\otimes I_{\cF_1})=(W_i^{(g)}\otimes I_{\cF})\Psi_k,\qquad i\in \{1,\ldots, n\},
$$ 
and 
$$
\Phi_1\Psi_1+\cdots + \Phi_m\Psi_m=I
$$
\item[(ii)] There exists $\delta>0$ such that
$$
\|\Phi_1^*x\|^2+\cdots +\|\Phi_m^*x\|^2\geq \delta \|x\|^2,\qquad x\in F^2(H_n)\otimes \cF_1.
$$
\end{enumerate}
\end{theorem}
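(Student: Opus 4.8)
The plan is to prove the two implications separately: the forward direction is an elementary Cauchy--Schwarz estimate, while the reverse direction is a reduction to the factorization/commutant-lifting machinery already in place. First I would treat (i)$\implies$(ii). Assume $\Phi_1\Psi_1+\cdots+\Phi_m\Psi_m=I$ with each $\Psi_k$ intertwining as in (i), and set $M:=\sum_{k=1}^m\|\Psi_k\|^2$ (which is positive, since otherwise the left-hand side would be $0\neq I$). For every $x\in F^2(H_n)\otimes\cF_1$ one has
\[
\|x\|^2=\Bigl\langle\sum_{k=1}^m\Phi_k\Psi_k x,\,x\Bigr\rangle=\sum_{k=1}^m\langle\Psi_k x,\Phi_k^*x\rangle\le\Bigl(\sum_{k=1}^m\|\Psi_k x\|^2\Bigr)^{1/2}\Bigl(\sum_{k=1}^m\|\Phi_k^*x\|^2\Bigr)^{1/2},
\]
and since $\sum_k\|\Psi_k x\|^2\le M\|x\|^2$, this yields $\sum_{k=1}^m\|\Phi_k^*x\|^2\ge\tfrac1M\|x\|^2$, i.e. (ii) holds with $\delta:=1/M$.

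For the converse (ii)$\implies$(i), the idea is to assemble the $\Phi_k$ into a single row operator and invert it within the class of multi-analytic operators. Put $\cF^{(m)}:=\cF\oplus\cdots\oplus\cF$ ($m$ copies) and define the row operator
\[
\Phi:=\bigl[\,\Phi_1\ \cdots\ \Phi_m\,\bigr]:F^2(H_n)\otimes\cF^{(m)}\to F^2(H_n)\otimes\cF_1 .
\]
Because each $\Phi_k$ intertwines $W_i^{(g)}\otimes I_\cF$ with $W_i^{(f_1)}\otimes I_{\cF_1}$, the operator $\Phi$ is multi-analytic with respect to ${\bf W}^{(g)}$ and ${\bf W}^{(f_1)}$, and one computes $\Phi\Phi^*=\sum_{k=1}^m\Phi_k\Phi_k^*$, so that hypothesis (ii) reads exactly $\|\Phi^*x\|^2\ge\delta\|x\|^2$ for all $x$, equivalently $\Phi\Phi^*\ge\delta I$. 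In particular $\Phi$ is surjective and admits a bounded right inverse. Applying Corollary \ref{multi2} (with $\cF$ replaced by $\cF^{(m)}$)---equivalently Theorem \ref{corona1} with $A:=\sqrt\delta\,I$ and $B:=\Phi$, for which $AA^*=\delta I\le\Phi\Phi^*=BB^*$, followed by rescaling---produces a multi-analytic operator $\Psi:F^2(H_n)\otimes\cF_1\to F^2(H_n)\otimes\cF^{(m)}$ with $\Psi(W_i^{(f_1)}\otimes I_{\cF_1})=(W_i^{(g)}\otimes I_{\cF^{(m)}})\Psi$ and $\Phi\Psi=I$. Writing $\Psi$ as a column $(\Psi_1,\ldots,\Psi_m)$ with $\Psi_k:F^2(H_n)\otimes\cF_1\to F^2(H_n)\otimes\cF$, the block-diagonal form of $W_i^{(g)}\otimes I_{\cF^{(m)}}$ forces each $\Psi_k$ to intertwine $W_i^{(f_1)}$ with $W_i^{(g)}$, and $\sum_{k=1}^m\Phi_k\Psi_k=\Phi\Psi=I$, which is precisely (i).

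The main obstacle is the reverse direction, and specifically the passage from a mere bounded right inverse of $\Phi$ to a multi-analytic one. Surjectivity of $\Phi$ by itself only yields the unstructured right inverse $\Phi^*(\Phi\Phi^*)^{-1}$, which need not intertwine the universal models; the real content of the theorem is upgrading this to an intertwining (hence Toeplitz-type) solution, and that upgrade is exactly what the commutant lifting theorem provides through Theorem \ref{corona1}/Corollary \ref{multi2}. Thus the substantive analytic work has already been carried out in the commutant lifting results, and the residual tasks are purely organizational: identifying $\Phi\Phi^*$ with $\sum_k\Phi_k\Phi_k^*$, reading (ii) as the operator inequality $\delta I\le\Phi\Phi^*$, and splitting the resulting intertwiner into its components. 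In carrying this out I would take care to verify that the regularity and inclusion hypotheses relating $f_1$ and $g$ are stated in exactly the form needed to invoke Corollary \ref{multi2} for the enlarged coefficient space $\cF^{(m)}$.
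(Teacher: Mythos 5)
Your proof is correct and follows essentially the same route as the paper: the paper likewise assembles the row operator $\Phi=[\Phi_1\cdots\Phi_m]$ and the column $\Psi=[\Psi_1\cdots\Psi_m]^t$, observes that (i) is equivalent to the single relation $\Phi\Psi=I$, and invokes Corollary \ref{multi2} (whose own proof is precisely your application of Theorem \ref{corona1} with $A:=\sqrt{\delta}\,I$, $B:=\Phi$ and rescaling). The only difference is cosmetic: you establish (i)$\implies$(ii) by a direct Cauchy--Schwarz estimate, while the paper gets it from the standard equivalence of (ii) with right-invertibility of $\Phi$; both are routine, and your closing caution about matching the regularity/inclusion hypotheses when invoking Corollary \ref{multi2} for the enlarged space $\cF^{(m)}$ is exactly the point the paper's terse proof leaves implicit.
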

\begin{proof} Define $\Phi:=[\Phi_1 \cdots  \Phi_k]$ and $\Psi:=[\Psi_1 \cdots  \Psi_k]^t$, the transpose,  and note that
$$
\Phi(W_i^{(g)}\otimes I_{\oplus_1^k \cF})= (W_i^{(f_1)}\otimes I_{\cF_1}) \Phi\quad \text{and}\quad 
\Psi(W_i^{(f_1)}\otimes I_{\cF_1})= (W_i^{(g)}\otimes I_{\oplus_1^k\cF}) \Phi
$$
for every $i\in \{1,\ldots, n\}$. Since (i)  is equivalent to relation  $\Phi\Psi=I$, an application of Corollary \ref{multi2} completes the proof.
\end{proof}

\bigskip

 \section{Boundary property for universal models}
 
 In this section, we provide a large class of universal models ${\bf W}=(W_1,\ldots, W_n)$ with the boundary property. In this case, we show that the full Fock space $F^2(H_n)$ is rigid with respect to  ${\bf W}$.
 
 We recall some basics concerning the Arveson's  theory of boundary representations \cite{Arv1-acta}, \cite{Arv2-acta}.
 Let $\cB$ be a unital $C^*$-algebra and let $\cS$ be a linear subspace of $\cB$, which contains the identity of $\cB$, and such that $\cB=C^*(\cS)$. An irreducible representation $\pi:C^*(\cS)\to B(\cH)$ is said to be a {\it boundary  representation} for  $\cS$ if $\pi|_\cS$ has a unique completely positive extension to $\cB$, namely, $\pi$ itself. Arveson's boundary theorem \cite{Arv2-acta} states that if $\cS\subset B(\cH)$ is an irreducible linear subspace such that $I\in \cS$  and the $C^*$-algebra $C^*(\cS)$ contains the compact operators on $\cH$, then the identity representation  of 
 $C^*(\cS)$ is a boundary representation for $\cS$ if and only if the quotient map
 $Q:B(\cH)\to B(\cH)/{\cK(\cH)}$ is not completely isometric  on the linear span of $\cS\cup \cS^*$.

  Let $g= 1+\sum_{ |\alpha|\geq 1} b_\alpha
Z_\alpha$ be a free holomorphic function in a neighborhood of the origin such that  $b_\alpha>0$ and 
$$
\sup_{\alpha\in \FF_n^+} \frac{b_\alpha}{b_{g_i \alpha}}<\infty,\quad \text{ for every }\ i\in \{1,\ldots, n\},
$$ 
and 
let   ${\bf W}:=(W_1,\ldots, W_n)$ be  the   weighted left creation operators associated with $g$.
 We recall that, due to Theorem \ref{irreducible}, the $C^*$-algebra  $C^*({\bf W})$ is irreducible.
    
 \begin{definition}  We say that ${\bf W}$ has the boundary property if the identity representation of the  $C^*$-algebra $C^*({\bf W})$ is a boundary representation for the operator space $\Span \{I,W_1,\ldots, W_n\}$.
 \end{definition}

The next result provides a large class  of universal models ${\bf W}=(W_1,\ldots, W_n)$ with  the boundary property.

\begin{theorem}
 Let $g= 1+\sum_{ |\alpha|\geq 1} b_\alpha
Z_\alpha$ be a free holomorphic function in a neighborhood of the origin such that  $b_\alpha>0$ and 
$$
\sup_{\alpha\in \FF_n^+} \frac{b_\alpha}{b_{g_i \alpha}}<\infty,\quad \text{ for every }\ i\in \{1,\ldots, n\},
$$ 
and 
let   ${\bf W}:=(W_1,\ldots, W_n)$ be  the   weighted left creation operators associated with $g$.
 Assume that the following conditions hold.
\begin{enumerate}
\item[(i)] There is $i\in \{1,\ldots, n\}$ such that 
$$
\lim_{\gamma\to \infty} \left(\frac{b_{g_p\gamma}}{b_{g_ig_p\gamma}}-\frac{b_\gamma}{b_{g_p\gamma}}\right) =0,\qquad p\in \{1,\ldots, n\}.
$$
\item[(ii)] The following  limit  exists  and \ $\lim_{\gamma\to \infty}\sum_{i=1}^n \frac{b_\gamma}{b_{g_i\gamma}}<\sup_{\gamma\in \FF_n^+}\sum_{i=1}^n \frac{b_\gamma}{b_{g_i\gamma}}$. 
\end{enumerate}
Then the  $n$-tuple ${\bf W}:=(W_1,\ldots, W_n)$   has the boundary property.
\end{theorem}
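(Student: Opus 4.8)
The plan is to apply Arveson's boundary theorem (the result from \cite{Arv2-acta} quoted above) to the operator space $\cS:=\Span\{I,W_1,\ldots,W_n\}\subset B(F^2(H_n))$. First I would verify its hypotheses. Since $C^*(\cS)=C^*({\bf W})$, Theorem \ref{irreducible}(i) shows that $\cS$ is irreducible, and $I\in\cS$ trivially. Condition (i) of the present theorem is exactly the hypothesis of Theorem \ref{irreducible}(iii), so $C^*({\bf W})$ contains all the compact operators $\boldsymbol\cK(F^2(H_n))$. Thus Arveson's theorem reduces the boundary property to the single assertion that the quotient map $Q:B(F^2(H_n))\to B(F^2(H_n))/\boldsymbol\cK(F^2(H_n))$ fails to be completely isometric on $\Span(\cS\cup\cS^*)=\Span\{I,W_1,\ldots,W_n,W_1^*,\ldots,W_n^*\}$.

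To produce such a failure I would use the column operator built from $W_1,\ldots,W_n$. Placing $W_i$ in the $(i,1)$ entry of an $n\times n$ matrix and zeros elsewhere gives an element $\widehat{C}\in M_n(\Span(\cS\cup\cS^*))$. Then $\|\widehat{C}\|^2=\|\sum_{i=1}^n W_i^*W_i\|$, while the amplified quotient satisfies $\|Q^{(n)}(\widehat{C})\|^2=\|\sum_{i=1}^n W_i^*W_i\|_e$, the essential norm. A direct computation from $W_i=S_iD_i$, using $S_i^*S_i=I$, shows that $\sum_{i=1}^n W_i^*W_i$ is the diagonal operator with $\sum_{i=1}^n W_i^*W_i\,e_\alpha=\big(\sum_{i=1}^n \tfrac{b_\alpha}{b_{g_i\alpha}}\big)e_\alpha$ for $\alpha\in\FF_n^+$.

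For a positive diagonal operator the full norm equals the supremum of the eigenvalues, while the essential norm equals $\inf_{F}\sup_{\alpha\notin F}\lambda_\alpha$ over finite sets $F$, i.e. their $\limsup$. Hence
$$
\|\widehat{C}\|^2=\sup_{\gamma\in\FF_n^+}\sum_{i=1}^n \frac{b_\gamma}{b_{g_i\gamma}}
\qquad\text{and}\qquad
\|Q^{(n)}(\widehat{C})\|^2=\lim_{\gamma\to\infty}\sum_{i=1}^n \frac{b_\gamma}{b_{g_i\gamma}},
$$
where in the second identity I use that condition (ii) guarantees the limit (in the lexicographic order on $\FF_n^+$) exists, so the essential supremum coincides with it. Condition (ii) then gives $\|Q^{(n)}(\widehat{C})\|<\|\widehat{C}\|$, so $Q$ is not completely isometric on $\Span(\cS\cup\cS^*)$, and Arveson's boundary theorem yields the boundary property for ${\bf W}$.

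The routine verifications are the matrix–norm identities for the column $\widehat{C}$ and the diagonalization of $\sum_i W_i^*W_i$. The one point requiring care, which I expect to be the main obstacle, is the identification of the essential norm with the prescribed limit: one must check that for this diagonal operator the quantity $\inf_F\sup_{\alpha\notin F}\sum_i\tfrac{b_\alpha}{b_{g_i\alpha}}$ equals $\lim_{\gamma\to\infty}\sum_i\tfrac{b_\gamma}{b_{g_i\gamma}}$, which is exactly where the hypothesis that this limit exists is essential. Once that identification is in place, the strict inequality in (ii) is precisely the statement that the full norm of the column strictly exceeds its essential norm, giving the desired non–complete–isometry.
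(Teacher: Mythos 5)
Your proposal is correct and follows essentially the same route as the paper: Arveson's boundary theorem applied via the column operator with entries $W_1,\ldots,W_n$, the diagonalization $\sum_{i=1}^n W_i^*W_i=\sum_{\gamma\in\FF_n^+}\bigl(\sum_{i=1}^n \tfrac{b_\gamma}{b_{g_i\gamma}}\bigr)P_{\CC e_\gamma}$, and the strict gap between norm and essential norm forced by hypotheses (i) and (ii). The only cosmetic difference is that the paper obtains the essential-norm estimate by noting that $\sum_{i=1}^n W_i^*W_i-aI$ is compact (so $\|\sum_i W_i^*W_i\|_e\le a$ suffices), whereas you identify the essential norm with the limit outright; both rest on the same diagonal-operator fact.
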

\begin{proof} According to Theorem \ref{irreducible}, if item (i) holds, then 
   the $C^*$-algebra $C^*({\bf W})$ contains all the compact operators in $B(F^2(H_n))$.  
   Using the definition of the operators $W_1,\ldots, W_n$, we deduce that
   \begin{equation}
   \label{sumWW}
   \sum_{i=1}^n W_i^*W_i=\sum_{\gamma\in \FF_n^+}\left(\sum_{i=1}^n \frac{b_\gamma}{b_{g_i\gamma}}\right)P_{\CC e_\gamma},
   \end{equation}
where  $P_{\CC e_\gamma}$ is the orthogonal projection onto the one-dimensional  space  $\CC e_\gamma$.
Setting $a:=\lim_{\gamma\to \infty}\sum_{i=1}^n \frac{b_\gamma}{b_{g_i\gamma}}$, we deduce that
 $$
   \sum_{i=1}^n W_i^*W_i -aI=\sum_{\gamma\in \FF_n^+}\left(\sum_{i=1}^n \frac{b_\gamma}{b_{g_i\gamma}}-a\right)P_{\CC e_\gamma}.
   $$
Note that $ \sum_{i=1}^n W_i^*W_i -aI$ is a compact operator  and, consequently, the essential norm satisfies the inequality
$$
\left\| \sum_{i=1}^n W_i^*W_i \right\|_e\leq a.
$$
On  the other hand, using relation  \eqref{sumWW}, we deduce that
$$
\left\| \sum_{i=1}^n W_i^*W_i \right\|=\sup_{\gamma\in \FF_n^+}\sum_{i=1}^n \frac{b_\gamma}{b_{g_i\gamma}}.
$$
Now, due to item(ii), we obtain
$$
\left\| \sum_{i=1}^n W_i^*W_i \right\|_e<\left\| \sum_{i=1}^n W_i^*W_i \right\|.
$$
Therefore, the Calkin map
 $Q:B(\cH)\to B(\cH)/{\cK(\cH)}$ is not completely isometric  on the linear span $\Span \{I,W_1,\ldots, W_n, W_1^*,\ldots, W_n^*\}$. Indeed, taking 
 $$
 M=\left(\begin{matrix} W_1&0&\cdots&0\\ \vdots&\vdots&\vdots&\vdots\\  W_n&0&\cdots&0
 \end{matrix}\right),
$$
we have $\|M\|^2_e=\|M^*M\|_e<\|M^*M\|=\|M\|^2$.
Applying now Arveson's boundary theorem, we conclude that  the identity representation of the  $C^*$-algebra $C^*({\bf W})$ is a boundary representation for the operator space $\Span \{I,W_1,\ldots, W_n\}$.
Hence, the  $n$-tuple ${\bf W}:=(W_1,\ldots, W_n)$   has the boundary property.
The proof is complete.
\end{proof}

\begin{corollary} Let $g= 1+\sum_{ |\alpha|\geq 1} b_\alpha
Z_\alpha$ be a free holomorphic function in a neighborhood of the origin such that  $b_\alpha>0$, 
$$
\sup_{\alpha\in \FF_n^+} \frac{b_\alpha}{b_{g_i \alpha}}<\infty,\quad \text{ for every }\ i\in \{1,\ldots, n\},
$$ 
and 
there is $i\in \{1,\ldots, n\}$ such that 
$$
\lim_{\gamma\to \infty} \left(\frac{b_{g_p\gamma}}{b_{g_ig_p\gamma}}-\frac{b_\gamma}{b_{g_p\gamma}}\right) =0,\qquad p\in \{1,\ldots, n\}. 
$$
If   ${\bf W}:=(W_1,\ldots, W_n)$  are  the   weighted left creation operators associated with $g$,
  then
$$
\left\| \sum_{i=1}^n W_i^*W_i \right\|=\sup_{\gamma\in \FF_n^+}\sum_{i=1}^n \frac{b_\gamma}{b_{g_i\gamma}}
\quad \text{and} \quad 
\left\| \sum_{i=1}^n W_i^*W_i \right\|_e\leq \lim_{\gamma\to \infty}\sum_{i=1}^n \frac{b_\gamma}{b_{g_i\gamma}}.
$$
\end{corollary}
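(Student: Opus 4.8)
The plan is to reduce the entire statement to the diagonal structure of $\sum_{i=1}^n W_i^*W_i$ already recorded as relation \eqref{sumWW} in the proof of the preceding theorem. First I would re-derive that structure: since $W_i=S_iD_i$ with $S_i$ the isometric left creation operator (so $S_i^*S_i=I$) and $D_i$ the positive diagonal operator $D_ie_\gamma=\sqrt{b_\gamma/b_{g_i\gamma}}\,e_\gamma$, one has $W_i^*W_i=D_i^*S_i^*S_iD_i=D_i^2$, whence $W_i^*W_ie_\gamma=(b_\gamma/b_{g_i\gamma})e_\gamma$. Summing over $i$ gives
$$
\sum_{i=1}^n W_i^*W_i=\sum_{\gamma\in\FF_n^+}\lambda_\gamma\,P_{\CC e_\gamma},\qquad \lambda_\gamma:=\sum_{i=1}^n\frac{b_\gamma}{b_{g_i\gamma}}\ge 0,
$$
a bounded positive operator, the boundedness coming from the standing assumption $\sup_\gamma b_\gamma/b_{g_i\gamma}<\infty$ for each $i$.

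The first identity is then immediate: a positive diagonal operator has operator norm equal to the supremum of its diagonal entries, so $\left\|\sum_{i=1}^n W_i^*W_i\right\|=\sup_{\gamma\in\FF_n^+}\sum_{i=1}^n b_\gamma/b_{g_i\gamma}$. For the essential-norm bound I would argue exactly as in the preceding proof: with $a:=\lim_{\gamma\to\infty}\lambda_\gamma$, the difference $\sum_{i=1}^n W_i^*W_i-aI=\sum_\gamma(\lambda_\gamma-a)P_{\CC e_\gamma}$ is a diagonal operator whose entries tend to $0$ and is therefore compact; using it as a competitor in the distance-to-the-compacts definition of the essential norm yields $\left\|\sum_{i=1}^n W_i^*W_i\right\|_e\le\|aI\|=a=\lim_{\gamma\to\infty}\sum_{i=1}^n b_\gamma/b_{g_i\gamma}$.

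The one step that is not purely formal is the essential-norm computation, and this is where I expect the main work to lie. For an arbitrary bounded positive diagonal operator the essential norm equals $\limsup_{\gamma\to\infty}\lambda_\gamma$: the upper bound follows by discarding the finitely many indices with $\lambda_\gamma>\limsup+\varepsilon$ (a finite-rank, hence compact, perturbation), and the lower bound by testing $\sum_iW_i^*W_i-K$ against a weakly null unit sequence $(e_{\gamma_k})$ along which $\lambda_{\gamma_k}\to\limsup$ and invoking compactness of $K$. Consequently the compactness of the remainder $\sum_iW_i^*W_i-aI$ — and hence the displayed inequality, which is in fact an equality — is legitimate precisely when the limit $a=\lim_\gamma\lambda_\gamma$ exists. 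Hypothesis (i) is the natural condition under which $\lambda_\gamma$ stabilizes as $\gamma\to\infty$; verifying this convergence (directly from $b_{g_p\gamma}/b_{g_ig_p\gamma}-b_\gamma/b_{g_p\gamma}\to 0$, as one checks explicitly for the functions of Examples \ref{Bergman} and \ref{Dirichlet}) is the only genuinely non-routine point, all other steps being a direct reading of \eqref{sumWW}.
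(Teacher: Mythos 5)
Your computation of the two norms is exactly the paper's: you reproduce the diagonal decomposition \eqref{sumWW}, read off $\left\|\sum_{i=1}^n W_i^*W_i\right\|$ as the supremum of the diagonal entries $\lambda_\gamma=\sum_{i=1}^n b_\gamma/b_{g_i\gamma}$, and bound the essential norm by subtracting $aI$, $a=\lim_\gamma\lambda_\gamma$, and observing that a diagonal operator whose entries tend to $0$ is compact. Your further remark that for a bounded positive diagonal operator the essential norm equals $\limsup_\gamma\lambda_\gamma$ is also correct.

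The gap is in your last paragraph: the existence of the limit $a=\lim_\gamma\lambda_\gamma$ does \emph{not} follow from the hypothesis $b_{g_p\gamma}/b_{g_ig_p\gamma}-b_\gamma/b_{g_p\gamma}\to 0$, and the verification you defer to that hypothesis — which you call the only genuinely non-routine point — would fail. The condition only says that certain consecutive differences of the ratios tend to zero, which is far weaker than convergence. Concretely, for $n=1$ write $b_k:=b_{g_1^k}$ and define $b_0=1$, $b_{k+1}:=b_k/(2+\sin\sqrt{k})$; then $b_k>0$, $b_k\le 1$ (so $g$ is free holomorphic near the origin), $\sup_k b_k/b_{k+1}\le 3<\infty$, and
$$
\frac{b_{k+1}}{b_{k+2}}-\frac{b_k}{b_{k+1}}=\sin\sqrt{k+1}-\sin\sqrt{k}\longrightarrow 0,
$$
so all hypotheses of the corollary hold, yet $\lambda_k=b_k/b_{k+1}=2+\sin\sqrt{k}$ has no limit; the same example, with $b_\alpha$ depending only on $|\alpha|$, works for every $n$. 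This is precisely why, in the preceding theorem, the paper postulates the existence of $\lim_\gamma\sum_{i=1}^n b_\gamma/b_{g_i\gamma}$ as a separate hypothesis (item (ii)) rather than deriving it from item (i); in the paper, item (i) plays an entirely different role — via Theorem \ref{irreducible} it forces the compact operators into $C^*({\bf W})$, which the boundary-property argument needs but the two norm identities never use. So the corollary must either be read with the implicit assumption that the limit exists (inherited from the theorem), or be restated with $\limsup$ in place of $\lim$ — in which case your own observation on diagonal operators proves it unconditionally, and with equality.
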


\begin{corollary}  \label{ab}  Let $g= 1+\sum_{ |\alpha|\geq 1} b_\alpha
Z_\alpha$ be a free holomorphic function in a neighborhood of the origin  
  with $b_\alpha=b_\beta>0$ if  $|\alpha|=|\beta|$  such that the following limit exists and 
$$
\lim_{k\to \infty} \frac{b_k}{b_{k+1}}=1< \sup_{k\in \NN\cup\{0\}}\frac{b_k}{b_{k+1}}<\infty,
$$
where $b_k:=b_\alpha$ for any $\alpha\in  F^2(H_n)$  with $|\alpha|=k\in \NN$.
Then the $n$-tuple  ${\bf W}:=(W_1,\ldots, W_n)$ associated with $g$ has the boundary property.
\end{corollary}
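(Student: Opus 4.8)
The plan is to reduce this to the preceding theorem of the section by verifying that its two hypotheses hold; the corollary is precisely the specialization to weights depending only on length. The key simplification I would exploit first is that, since $b_\alpha=b_k$ whenever $|\alpha|=k$, every relevant ratio collapses to a function of the length alone: for any $\gamma\in\FF_n^+$ with $|\gamma|=k$ and any $i\in\{1,\ldots,n\}$ we have $|g_i\gamma|=k+1$, hence $\frac{b_\gamma}{b_{g_i\gamma}}=\frac{b_k}{b_{k+1}}$, independent of both $i$ and of the particular word $\gamma$.

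Next I would verify condition (i) of the theorem. For $|\gamma|=k$ one has $\frac{b_{g_p\gamma}}{b_{g_ig_p\gamma}}=\frac{b_{k+1}}{b_{k+2}}$ and $\frac{b_\gamma}{b_{g_p\gamma}}=\frac{b_k}{b_{k+1}}$, so the expression whose limit appears in (i) equals $\frac{b_{k+1}}{b_{k+2}}-\frac{b_k}{b_{k+1}}$. Since $\lim_{k\to\infty}\frac{b_k}{b_{k+1}}=1$ exists, both terms tend to $1$ as $|\gamma|\to\infty$, and their difference tends to $0$; thus (i) holds (for every $i$, in fact).

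I would then verify condition (ii). Summing over $i$ gives $\sum_{i=1}^n\frac{b_\gamma}{b_{g_i\gamma}}=n\,\frac{b_k}{b_{k+1}}$ for $|\gamma|=k$, so the limit as $|\gamma|\to\infty$ is $n\lim_{k\to\infty}\frac{b_k}{b_{k+1}}=n$, while $\sup_{\gamma\in\FF_n^+}\sum_{i=1}^n\frac{b_\gamma}{b_{g_i\gamma}}=n\sup_{k\geq 0}\frac{b_k}{b_{k+1}}$. The hypothesis $1<\sup_k\frac{b_k}{b_{k+1}}$ then gives $n<n\sup_k\frac{b_k}{b_{k+1}}$, which is exactly the strict inequality required in (ii). Finally, $\sup_\alpha\frac{b_\alpha}{b_{g_i\alpha}}=\sup_k\frac{b_k}{b_{k+1}}<\infty$ and $b_\alpha>0$, so the standing hypotheses of the theorem are met, and it yields the boundary property for ${\bf W}$.

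There is no genuine obstacle here: the entire content is that length-only dependence turns both hypotheses of the theorem into elementary statements about the single real sequence $(b_k/b_{k+1})_k$. The only points requiring a little care are interpreting ``$\gamma\to\infty$'' as $|\gamma|\to\infty$, consistent with the ordering used in the theorem, and noting that the supremum defining $\left\|\sum_{i=1}^n W_i^*W_i\right\|$ is a supremum over lengths $k$, so that the strict gap between limit and supremum is inherited directly from the one-variable hypothesis $\lim_k b_k/b_{k+1}=1<\sup_k b_k/b_{k+1}$.
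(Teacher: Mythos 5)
Your proposal is correct and is essentially the paper's own argument: the corollary is stated as an immediate specialization of the preceding theorem, and your verification that the length-only dependence of the weights reduces hypotheses (i) and (ii) to the assumed properties of the sequence $(b_k/b_{k+1})_k$ is exactly what is needed. Your computations $\frac{b_{g_p\gamma}}{b_{g_ig_p\gamma}}-\frac{b_\gamma}{b_{g_p\gamma}}=\frac{b_{k+1}}{b_{k+2}}-\frac{b_k}{b_{k+1}}\to 0$ and $\sum_{i=1}^n\frac{b_\gamma}{b_{g_i\gamma}}=n\,\frac{b_k}{b_{k+1}}$, giving limit $n$ strictly below the supremum $n\sup_k b_k/b_{k+1}$, are precisely the required checks.
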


\begin{example}   Let $s\in \RR$ and let
$$
\xi_s=1+\sum_{\alpha\in \FF_n^+, |\alpha|\geq 1} (|\alpha|+1)^s Z_\alpha.
$$
Then $\lim_{k\to \infty} \frac{b_k}{b_{k+1}}=1$ and 
$$
\sup_{k\in \NN\cup\{0\}}\frac{b_k}{b_{k+1}}=\begin{cases}\frac{1}{2^s},&\quad  s<0\\
1,&\quad s\geq 0.\end{cases}
$$
Applying  Corollary \ref{ab}, we deduce that,
if $s<0$,
then the universal model ${\bf W}:=(W_1,\ldots, W_n)$  associated with $\xi_s$ has the boundary property. In particular, if $s=-1$,  the universal model of the noncommutative Dirichlet domain   has the boundary property. 
\end{example}

\begin{example}
For each $s\in (0,\infty)$, let 
$$
g_s=1+\sum_{k=1}^\infty\left(\begin{matrix} s+k-1 \\k \end{matrix}\right)(Z_1+\cdots +Z_n)^k.
$$ 
In this case, we have 
$\frac{b_k}{b_{k+1}}=\frac{k+1}{s+k}$. Consequently, $\lim_{k\to \infty} \frac{b_k}{b_{k+1}}=1$ and 
$$
\sup_{k\in \NN\cup\{0\}}\frac{b_k}{b_{k+1}}=\begin{cases}\frac{1}{s},&\quad  s\leq 1\\
1,&\quad s >1.\end{cases}
$$
Applying  Corollary \ref{ab}, we deduce that,
 if $s<1$,
then the universal model ${\bf W}:=(W_1,\ldots, W_n)$  associated with $g_s$  has the boundary property.  Due to Corollary \ref{RR},  ${\bf W}$ does not have the boundary property when  $s=1$. In this case, ${\bf W}=(S_1,\ldots, S_n)$, the $n$-tuple of left creation operators on the full Fock space.
\end{example}

Using the idea of the proof  of Proposition 8.1 from \cite{Arv3-acta}, we prove the following.

\begin{proposition} \label{Arveson}
Assume that  the universal model ${\bf W}:=(W_1,\ldots, W_n)$ has the boundary property. If $\{A_k\}_{k=1}^N$  is a sequence  ($N\in \NN$ or $N=\infty$) of operators in the commutant of the set $\{W_1,\ldots, W_n\}$ such that 
$$
A_1^* A_1+A_2^*A_2+\cdots =I,
$$
then $A_k=c_k I$  for some $c_k\in \CC$ such that $\sum_{k=1}^\infty |c_k|^2=1$.
\end{proposition}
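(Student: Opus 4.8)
The plan is to package the family $\{A_k\}$ into a single isometry and to recognize the hypothesis $\sum_k A_k^*A_k=I$ as producing a unital completely positive map on $C^*({\bf W})$ that agrees with the identity representation on the operator space $\cS:=\Span\{I,W_1,\ldots,W_n\}$; the boundary property will then force this map to be the identity representation. Concretely, I would fix an orthonormal basis $\{\epsilon_k\}$ of $\ell^2$ (truncated if $N<\infty$) and define $V\colon F^2(H_n)\to F^2(H_n)\otimes\ell^2$ by $Vh:=\sum_k (A_kh)\otimes\epsilon_k$. Since $V^*V=\sum_k A_k^*A_k=I$, with the series converging as in the hypothesis, $V$ is an isometry, and
\[
\Phi(X):=V^*(X\otimes I_{\ell^2})V=\sum_k A_k^*XA_k,\qquad X\in C^*({\bf W}),
\]
is a well-defined unital completely positive map (the sum converging in the weak operator topology). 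Using that each $A_k$ commutes with $W_1,\ldots,W_n$, so that $A_k^*W_iA_k=A_k^*A_kW_i$, I would check that $\Phi(W_i)=\bigl(\sum_k A_k^*A_k\bigr)W_i=W_i$ and $\Phi(I)=I$. As $\Phi$ is completely positive it is also $*$-preserving, so $\Phi$ restricts to the identity on $\cS$.

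Since $C^*({\bf W})$ is irreducible by Theorem \ref{irreducible}, its identity representation $\iota$ is irreducible, and by hypothesis ${\bf W}$ has the boundary property, i.e.\ $\iota$ is a boundary representation for $\cS$. Thus $\Phi$, being a unital completely positive extension of $\iota|_{\cS}$, must coincide with $\iota$: one gets $\Phi(X)=X$ for every $X\in C^*({\bf W})$.

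The final step is to exploit that $\Phi$ is now multiplicative. For every $X\in C^*({\bf W})$ the Kadison--Schwarz defect vanishes,
\[
0=\Phi(X^*X)-\Phi(X)^*\Phi(X)=V^*(X^*\otimes I_{\ell^2})(I-VV^*)(X\otimes I_{\ell^2})V,
\]
and since $I-VV^*$ is a projection the right-hand side is $Y^*Y$ with $Y=(I-VV^*)(X\otimes I_{\ell^2})V$, forcing $Y=0$; hence $(X\otimes I_{\ell^2})V=VV^*(X\otimes I_{\ell^2})V=V\Phi(X)=VX$. Reading this intertwining relation off in the coordinate $\epsilon_k$ yields $XA_k=A_kX$ for all $X\in C^*({\bf W})$ and all $k$, so each $A_k$ lies in the commutant $C^*({\bf W})'$, which is $\CC I$ by irreducibility. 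Therefore $A_k=c_kI$, and substituting into $\sum_k A_k^*A_k=I$ gives $\sum_k|c_k|^2=1$. I expect the one genuinely delicate point to be the passage from the operator-space identity $\Phi|_{\cS}=\iota|_{\cS}$ to the global identity $\Phi=\iota$: for a general irreducible ${\bf W}$ there may be many completely positive extensions of $\iota|_{\cS}$, and it is precisely the boundary (unique extension) property that eliminates all but $\iota$, after which the multiplicativity rigidity above does the rest.
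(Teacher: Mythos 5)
Your proof is correct and follows essentially the same route as the paper: you package $\{A_k\}$ into an isometry (the paper's column operator $U$ into $\bigoplus_1^N F^2(H_n)$, your $V$ into $F^2(H_n)\otimes\ell^2$), form the unital completely positive map $\Phi(X)=\sum_k A_k^*XA_k$, use the commutation to see it fixes $I$, $W_i$, $W_i^*$, invoke the boundary property to get $\Phi=\mathrm{id}$ on $C^*({\bf W})$, and then derive the intertwining $ (X\otimes I)V=VX$ to conclude each $A_k$ lies in $C^*({\bf W})'=\CC I$ by irreducibility. Your Kadison--Schwarz defect computation is just a repackaging of the paper's expansion of $(UX-\pi(X)U)^*(UX-\pi(X)U)=0$, so the two arguments coincide in substance.
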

\begin{proof}
Define  the map $\Omega:C^*({\bf W})\to B(F^2(H_n))$ by setting
 $\Omega(X):=\sum_{k=1}^N A_i^*XA_i$. When $N=\infty$, the convergence  of the series is in the strong operator topology. Due to the fact that  $\{A_k\}_{k=1}^N\subset \{W_1,\ldots, W_n\}'$, we have  $\Omega(I)=I$, $\Omega(W_i)=W_i$ and  $\Omega(W_i^*)=W_i^*$ for every $i\in \{1,\ldots, n\}$.
Since ${\bf W}:=(W_1,\ldots, W_n)$ has the boundary property, we must have $\Omega(X)=X$ for every $X\in C^*({\bf W})$. 
Define $U:=\left[\begin{matrix} A_1\\A_2\\ \vdots \end{matrix}\right]:F^2(H_n)\to \bigoplus_1^N F^2(H_n)$ and note that $U$ is an isometry.
Consider the representation  $\pi:C^*({\bf W})\to \bigoplus_1^N F^2(H_n)$ defined by $\pi(X):=\oplus_1^N X$.
Note that  $\Omega(X)=U^*\pi(X)U$ for all $X\in C^*({\bf W})$ and
\begin{equation*}
\begin{split}
(UX-\pi(X)U)^*(UX-\pi(X)U)&=X^*U^*UX-U^*\pi(X^*)UX-X^*U^*\pi(X)U+U^*\pi(X^*X)U\\
&=XX^*-\Omega(X^*)X-X^*\Omega(X)+\Omega(X^*X)=0
\end{split}
\end{equation*}
for all $X\in C^*({\bf W})$. Consequently, $UX=\pi(X)U$, which shows that $A_kX=XA_k$ for every $X\in C^*({\bf W})$ and $k\in \{1,\ldots, N\}$.
Since the $C^*$-algebra  $C^*({\bf W})$ is irreducible (see Theorem \ref{irreducible}), we deduce that $A_k=c_k I$  for some $c_k\in \CC$ such that $\sum_{k=1}^\infty |c_k|^2=1$.
The proof is complete.
\end{proof}

\begin{corollary} \label{RR} The $n$-tuple of left creation operators $(S_1,\ldots, S_n)$  on the full Fock space $F^2(H_n)$  does not have the boundary property.

\end{corollary}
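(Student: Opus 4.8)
The goal is to show that the $n$-tuple of left creation operators $(S_1,\ldots,S_n)$ on $F^2(H_n)$ does \emph{not} have the boundary property. The natural strategy is to apply Proposition \ref{Arveson} in its contrapositive form: if $(S_1,\ldots,S_n)$ had the boundary property, then any family $\{A_k\}$ in the commutant of $\{S_1,\ldots,S_n\}$ satisfying $\sum_k A_k^*A_k = I$ would be forced to consist of scalar multiples of the identity. So the plan is to exhibit an explicit family $\{A_k\}$ in the commutant $\{S_1,\ldots,S_n\}'$, satisfying the normalization $\sum_k A_k^*A_k = I$, whose members are manifestly \emph{not} scalar multiples of $I$. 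The existence of such a family contradicts the conclusion of Proposition \ref{Arveson}, and hence rules out the boundary property.

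First I would recall that for $(S_1,\ldots,S_n)$ the commutant of $\{S_1,\ldots,S_n\}$ is exactly the algebra generated by the right creation operators $R_1,\ldots,R_n$; that is, $\{S_1,\ldots,S_n\}' = R^\infty$, the noncommutative analytic Toeplitz-type algebra on the right. This is the classical fact that left and right creation operators commute, $S_iR_j = R_jS_i$. Since $g_s$ with $s=1$ gives ${\bf W}=(S_1,\ldots,S_n)$, the relevant admissible function here corresponds to $b_k = 1$ for all $k$. The key point is that $R^\infty$ contains many non-scalar elements; in particular each $R_i$ lies in the commutant and is not a scalar multiple of $I$.

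The concrete construction I would use is to take $A_i := R_i$ for $i \in \{1,\ldots,n\}$. These satisfy $A_i \in \{S_1,\ldots,S_n\}'$, and since the $R_i$ are isometries with mutually orthogonal ranges, we have
$$
\sum_{i=1}^n A_i^* A_i = \sum_{i=1}^n R_i^* R_i = n\, I,
$$
which after rescaling by $\tfrac{1}{\sqrt n}$ (i.e.\ setting $A_i := \tfrac{1}{\sqrt n}R_i$) yields $\sum_{i=1}^n A_i^*A_i = I$. The operators $A_i = \tfrac{1}{\sqrt n}R_i$ are evidently not scalar multiples of the identity (for instance, $R_i$ shifts $e_{g_0}=1$ to $e_{g_i}$, so it cannot equal $cI$). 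This directly contradicts the conclusion of Proposition \ref{Arveson}, which would require every $A_k$ to be of the form $c_k I$. Therefore $(S_1,\ldots,S_n)$ cannot have the boundary property.

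The main obstacle to watch for is verifying that the hypotheses of Proposition \ref{Arveson} are genuinely applicable in a clean contrapositive: the proposition asserts that \emph{if} ${\bf W}$ has the boundary property, \emph{then} the rigidity conclusion holds, so I must make sure the $\{A_k\}$ I produce satisfy all the stated hypotheses (membership in the commutant and the identity $\sum_k A_k^*A_k=I$) so that the contradiction is forced. I should also confirm that the irreducibility invoked in Proposition \ref{Arveson} and the identification of the commutant with $R^\infty$ are legitimately available in this setting; both are standard for the left creation operators. Once these points are in place, the argument is short: the explicit family $\{\tfrac{1}{\sqrt n}R_i\}_{i=1}^n$ serves as the counterexample to rigidity, and the boundary property fails.
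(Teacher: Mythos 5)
Your proof is correct and follows essentially the same route as the paper: both invoke Proposition \ref{Arveson} contrapositively with right creation operators, which commute with the $S_i$ and are manifestly non-scalar. The only cosmetic difference is that the paper exploits the fact that a single isometry $R_i$ already satisfies the hypothesis $R_i^*R_i=I$ (a one-element family), whereas you rescale the full family $\{\tfrac{1}{\sqrt n}R_i\}_{i=1}^n$; both are equally valid.
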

\begin{proof} Note that the right creation operators  $R_1,\ldots, R_n$ are in the commutant of $\{S_1,\ldots, S_n\}$. Since $R_i^*R_i=I$, we can apply Proposition \ref{Arveson}, to conclude that 
$(S_1,\ldots, S_n)$  does not have the boundary property.
\end{proof}

Let $\cM\subset F^2(H_n)$ and $\cN\subset F^2(H_n)$ be  closed invariant  invariant subspaces under $W_1,\ldots, W_n$. We say that $\cM$ and $\cN$ are equivalent if there is a unitary operator $U:\cM\to \cN$ such that
$
UW_i=W_i U, i\in \{1,\ldots, n\}.
$
We say  that  an invariant subspace $\cM\subset F^2(H_n)$  under $W_1,\ldots, W_n$ is rigid  (or that $(\cM, {\bf W})$ is  rigid) if any invariant subspace $\cN\subset F^2(H_n)$ which is unitarily equivalent to $\cM$ must  be $\cM$ itself.  

\begin{proposition}
Assume that the universal model ${\bf W}:=(W_1,\ldots, W_n)$ associated with $g$  has the boundary property. Then $(F^2(H_n), {\bf W})$ is rigid.
\end{proposition}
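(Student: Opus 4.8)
The plan is to reduce the statement to the single-operator case of Proposition \ref{Arveson}. Suppose $\cN\subset F^2(H_n)$ is a closed invariant subspace under $W_1,\ldots,W_n$ that is unitarily equivalent to $(F^2(H_n),{\bf W})$; by definition there is a unitary operator $U:F^2(H_n)\to \cN$ with $UW_i=W_iU$ for every $i\in\{1,\ldots,n\}$, where on the right $W_i$ is understood as $W_i|_\cN$. My goal is to show that this forces $\cN=F^2(H_n)$, so that the only invariant subspace unitarily equivalent to $(F^2(H_n),{\bf W})$ is $F^2(H_n)$ itself.

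First I would regard $U$ as a map into the ambient space, $U:F^2(H_n)\to F^2(H_n)$, by composing with the inclusion $\cN\hookrightarrow F^2(H_n)$. Since $\cN$ is invariant under each $W_i$, the restriction $W_i|_\cN$ agrees with $W_i$ on vectors of $\cN$, and hence the intertwining relation $UW_i=W_iU$ holds as an identity of operators on all of $F^2(H_n)$. Thus $U$ lies in the commutant $\{W_1,\ldots,W_n\}'$. Moreover, being a unitary from $F^2(H_n)$ onto $\cN$, the operator $U$ is an isometry when viewed on the whole space, so that $U^*U=I_{F^2(H_n)}$ while $UU^*=P_\cN$.

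Now I would invoke Proposition \ref{Arveson}, which is available precisely because ${\bf W}$ has the boundary property. Taking $N=1$ and $A_1:=U$, the hypothesis $A_1^*A_1=U^*U=I$ is satisfied and $A_1\in\{W_1,\ldots,W_n\}'$, so the proposition yields $U=c_1I$ for some $c_1\in\CC$ with $|c_1|^2=1$. Consequently $\cN=\ran U=\ran(c_1 I)=F^2(H_n)$, which establishes that $(F^2(H_n),{\bf W})$ is rigid.

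There is essentially no hard obstacle here, since everything follows from Proposition \ref{Arveson} once the set-up is in place; the only conceptual point worth stressing is the passage from the abstract unitary equivalence $U:F^2(H_n)\to\cN$ to an isometry in the commutant of the whole $n$-tuple, which is exactly where the invariance of $\cN$ is used and where the boundary property (through Proposition \ref{Arveson}) does all the work.
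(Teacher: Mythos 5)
Your proof is correct and is essentially identical to the paper's: the paper likewise views the intertwining unitary $U:F^2(H_n)\to\cM$ as an isometry in the commutant of $\{W_1,\ldots,W_n\}$ with $U^*U=I$, applies Proposition \ref{Arveson} (with $N=1$) to get $U=cI$ with $|c|=1$, and concludes $\cM=F^2(H_n)$. Your write-up merely makes explicit the passage from the abstract equivalence to an operator on the ambient space, which the paper leaves implicit.
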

\begin{proof}
Let $\cM\subset F^2(H_n)$ be  an invariant  invariant subspaces under $W_1,\ldots, W_n$ and let $U:F^2(H_n)\to \cM$ be a unitary operator such that  $UW_i=W_i U$ for every 
$ i\in \{1,\ldots, n\}.$ Since $U^*U=I$, we can apply Proposition  \ref{Arveson} to deduce that $U=cI$ for some $c\in \CC\backslash\{0\}$. Consequently, $\cM=F^2(H_n)$, which completes the proof.
\end{proof}

      \bigskip

       %


\begin{thebibliography}{99}

\bibitem{Ag1} {\sc J.~Agler}, The Arveson extension theorem and
coanalytic models, {\it Integral Equations Operator Theory} {\bf 5}
(1982), 608-631.

\bibitem{Ag2} {\sc J.~Agler},
Hypercontractions and subnormality, {\it J. Operator Theory} {\bf
13} (1985), 203--217.





\bibitem{APo1} {\sc A.~Arias and G.Popescu},  Noncommutative interpolation and Poisson transforms. II, {\it Houston J. Math.}  {\bf 25} (1999), no. 1, 79--98. 

\bibitem{APo2} {\sc A.~Arias and G.Popescu},  Noncommutative interpolation and Poisson transforms,  {\it Israel J. Math.} {\bf 115} (2000), 205--234.

\bibitem{Arv1-acta} {\sc W.B.~Arveson}, Subalgebras of $C^*$-algebras,
{\it Acta.Math.} {\bf 123} (1969), 141--224.

\bibitem{Arv2-acta} {\sc W.B.~Arveson}, Subalgebras of $C^*$-algebras II,
{\it Acta.Math.} {\bf 128} (1972), 271--308.



\bibitem{Arv-book}  {\sc  W.B.~Arveson},
{\it An invitation to $C^*$-algebras}, Graduate Texts in Math., {\bf
39}. Springer-Verlag, New-York-Heidelberg, 1976.



\bibitem{Arv3-acta}    {\sc W.B.~Arveson},
     {Subalgebras of $C^*$-algebras III: Multivariable operator theory,}
{\it Acta Math.} {\bf 181} (1998), 159--228.



\bibitem{At} {\sc A.~Athavale},
{On the intertwining of joint isometries}, {\it J. Operator Theory}
{\bf 23} (1990), 339--350.




\bibitem{BV} {\sc J.~A.~Ball   and V.~Vinnikov},
Lax-Phillips Scattering and Conservative Linear Systems: A Cuntz-Algebra
Multidimensional Setting,
{\it Mem. Amer. Math. Soc.} {\bf 837} (2005).



\bibitem{BB} {\sc J.~A.~Ball, V.~Bolotnikov}, A Beurling type theorem in weighted Bergman spaces, {\it C. R. Math.} {\bf 351} (2013), 433--436.





\bibitem{Be} {\sc A.~Beurling}
{ On two problems concerning linear transformations in Hilbert
space}, {\it  Acta Math.} {\bf 81}, (1948), 239--251.

\bibitem{BES} {\sc T.~Bhattacharyya, J.~Eschmeier, J.~Sarkar}, Characteristic function of a pure commuting contractive tuple, {\it Integral Equations Operator Theory} {\bf 53} (2005) 23--32.




\bibitem{B} {\sc J.~W.~Bunce},
 Models for n-tuples of noncommuting operators, {\it J. Funct. Anal.}
  {\bf 57}(1984), 21--30.




 {\it Bull. Amer. Math. Soc.}  {\bf 73} (1967), 722--726.



\bibitem{Cu} {\sc J.~Cuntz},
 Simple $C^*$--algebras generated by isometries,
 {\it  Commun. Math. Phys.}
 {\bf 57} (1977), 173--185.


 \bibitem{CV1} {\sc R.E.~Curto, F.H.~Vasilescu}, Automorphism invariance
 of the operator-valued Poisson transform,
{\it Acta Sci. Math. (Szeged)}
{\bf 57} (1993), 65--78.







            \bibitem{DP1} {\sc K.R.~Davidson and D. ~Pitts},
      {Invariant subspaces and hyper-reflexivity for free semigroup
      algebras,}
      {\it Proc. London Math. Soc.} {\bf 78} (1999), 401--430.



\bibitem{DP2} {\sc K.~R.~Davidson and D.~Pitts},
The algebraic structure of non-commutative analytic Toeplitz algebras,
{\it  Math. Ann.}
   {\bf 311} (1998),  275--303.


\bibitem{DKP}  {\sc K.R.~Davidson, E.~Katsoulis, and D.~Pitts},
 The structure of free semigroup algebras,
 {\it J. Reine Angew. Math.}
  {\bf 533} (2001), 99--125.


 


 

\bibitem{Dru} {\sc S.W.~Drury},
A generalization of von Neumann's inequality to complex ball, {\it
Proc. Amer. Math. Soc.} {\bf 68} (1978), 300--404.

 
 \bibitem{F} {\sc  A.~E.~Frazho},
 Models for noncommuting operators, {\it J. Funct. Anal.}
  {\bf 48} (1982), 1--11.









\bibitem{Ha} {\sc P.~Halmos},  Shifts on Hilbert spaces, {\it  J. Reine Angew. Math.}
{\bf 208} (1961), 102--112.






 \bibitem{La} {\sc P.~Lax}, Translation invariant spaces, {\it  Acta. Math.} {\bf 101} (1959), 163--178.




\bibitem{M} {\sc V.~M\"uller}, Models for operators using weighted
shifts, {\it J. Operator Theory} {\bf 20} (1988), 3--20.

\bibitem{MV} {\sc V.~M\"uller and F.H.~Vasilescu}, Standard models for
some commuting multioperators, {\it Proc. Amer. Math. Soc.} {\bf
117} (1993), 979--989.




\bibitem{O1} {\sc A.~Olofsson},
A characteristic operator function for the class of
$n$-hypercontractions, {\it  J. Funct. Anal.} {\bf 236} (2006), no.
2, 517--545.

\bibitem{O2} {\sc A.~Olofsson},  An operator-valued Berezin transform and
the class of $n$-hypercontractions,  {\it Integral Equations Operator Theory}  {\bf 58}  (2007),  no. 4, 503--549.



\bibitem{Pa-book} {\sc V.I.~Paulsen},
 {\it Completely Bounded Maps and Dilations},
Pitman Research Notes in Mathematics, Vol.146, New York, 1986.

  
\bibitem{Pi-book} {\sc G.~Pisier},
 {\rm Similarity Problems and Completely Bounded Maps},
  Springer Lect. Notes Math., Vol.\,1618, Springer-Verlag, New York, 1995
 




\bibitem{Po-isometric} {\sc G.~Popescu}, Isometric dilations for infinite
sequences of noncommuting operators, {\it Trans. Amer. Math. Soc.}
{\bf 316} (1989), 523--536.


\bibitem{Po-charact} {\sc G.~Popescu}, Characteristic functions for infinite
sequences of noncommuting operators, {\it J. Operator Theory}
{\bf 22} (1989), 51--71.


      \bibitem{Po-von} {\sc G.~Popescu},
{Von Neumann inequality for $(B(H)^n)_1$,}
      {\it Math.  Scand.} {\bf 68} (1991), 292--304.

    \bibitem{Po-funct} {\sc G.~Popescu},
     {Functional calculus for noncommuting operators,}
       {\it Michigan Math. J.} {\bf 42} (1995), 345--356.


      \bibitem{Po-analytic} {\sc G.~Popescu},
      {Multi-analytic operators on Fock spaces,}
      {\it Math. Ann.} {\bf 303} (1995), 31--46.



      \bibitem{Po-poisson} {\sc G.~Popescu},
     {Poisson transforms on some $C^*$-algebras generated by isometries,}
       {\it J. Funct. Anal.} {\bf 161} (1999),  27--61.


\bibitem{Po-curvature} {\sc  G.~Popescu},
  Curvature invariant for Hilbert modules over free semigroup algebras,
   {\it Adv. Math.}
 {\bf 158} (2001), 264--309.


\bibitem{Po-varieties} {\sc  G.~Popescu},
Operator theory on noncommutative varieties, {\it Indiana Univ.
Math.~J.} {\bf 56} (2006),   389--442.

\bibitem{Po-Nehari} {\sc  G.~Popescu},
Multivariable Nehari problem and interpolation {\it  J. Funct. Anal.} {\bf 200} (2003), no. 2, 536--581.



\bibitem{Po-entropy} {\sc G.~Popescu},
Entropy and Multivariable Interpolation, {\it Mem. Amer. Math. Soc.}
{\bf 184} (2006) no.868 , vi+83 pp.



      \bibitem{Po-holomorphic} {\sc G.~Popescu},
      {Free holomorphic functions on the unit ball of $B(\cH)^n$},
      {\it J. Funct. Anal.}  {\bf 241} (2006), 268--333.

 \bibitem{Po-Berezin} {\sc G.~Popescu},
Noncommutative Berezin transforms and multivariable operator model
theory, {\it J.  Funct.  Anal. } {\bf  254} (2008), 1003-1057.


\bibitem{Po-unitary} {\sc  G.~Popescu}, Unitary invariants in multivariable operator theory. Mem. Amer. Math. Soc.  {\bf 200}  (2009),  no. 941, vi+91 pp.

\bibitem{Po-automorphisms} {\sc  G.~Popescu}, Free holomorphic automorphisms of the unit ball of $B(H)^n$, {\it  J. Reine Angew. Math.}  {\bf 638}  (2010), 119--168.


\bibitem{Po-domains} {\sc  G.~Popescu},
Operator theory on noncommutative domains, {\it Mem. Amer. Math. Soc.}  {\bf 205}  (2010),  no. 964, vi+124 pp.




\bibitem{Po-invariant} {\sc  G.~Popescu}, Invariant subspaces and operator model theory on noncommutative varieties, {\it  Math. Ann.} {\bf 372} (2018), no. 1-2, 611--650.


\bibitem{Po-Bergman} {\sc  G.~Popescu},  Bergman spaces over noncommutative domains and commutant  lifting, {\it  J. Funct. Anal.} {\bf 280}  (2021), no. 8, 108943.



\bibitem{Po-Noncommutative domains} {\sc  G.~Popescu}, Noncommutative domains, universal operator models, and operator algebras,   {\it J. Operator Theory}, to appear.



 \bibitem{Pot} {\sc S.~Pott},
 Standard models under polynomial positivity conditions,
 {\it J. Operator Theory} {\bf 41} (1999), no. 2, 365--389.







\bibitem{SSS1} {\sc G.~Salomon, O.M.~ Shalit, and E.~Shamovich}, {Algebras of bounded noncommutative analytic functions on subvarieties of the noncommutative unit ball}, {\it Trans. Amer. Math. Soc.} {\bf  370} (2018), no. 12, 8639--8690.


\bibitem{SSS2} {\sc G.~Salomon, O.M.~ Shalit, and E.~Shamovich},  Algebras of noncommutative functions on subvarieties of the noncommutative ball: the bounded and completely bounded isomorphism problem, {\it  J. Funct. Anal.}  {\bf 278}  (2020), no. 7, 108427, 54 pp.

\bibitem{S} {\sc D.~Sarason},
 Generalized interpolation in $H^\infty$,
{\it Trans. Amer. Math. Soc.}
  {\bf 127} (1967),   179--203.


\bibitem{S1} {\sc J.~Sarkar}, An invariant subspace theorem and invariant subspaces of analytic reproducing kernel Hilbert
spaces-I, {\it J. Oper. Theory} {\bf 73} (2015), 433--441.

\bibitem{S2} {\sc J.~Sarkar},  An invariant subspace theorem and invariant subspaces of analytic reproducing kernel Hilbert spaces-II, {\it  Complex Anal. Oper. Theory}  {\bf 10}  (2016),  no. 4, 769--782.


  \bibitem{St} {\sc W.F.~Stinespring}, Positive functions on $C^*$-algebras,
  {\it Proc. Amer. Math. Soc.}
  {\bf 6} (1955) 211--216.



\bibitem{SzFBK-book} {\sc B.~Sz.-Nagy, C.~Foia\c{s}, H.~Bercovici, and L.~K\' erchy}, {\em Harmonic
Analysis of Operators on Hilbert Space}, Second edition. Revised and enlarged edition. Universitext. Springer, New York, 2010. xiv+474 pp.


 \bibitem{T} {\sc D.~Timotin},  Regular dilations and models for multicontractions, {\it Indiana Univ. Math. J.} {\bf 47} (1998), no. 2, 671–-684.


\bibitem{Va}{\sc F.H.~Vasilescu},
{An operator-valued Poisson kernel},
{\it J. Funct. Anal.}
  {\bf 110} (1992), 47--72.
     %



\bibitem{W}{\sc H.~Wold}, {\it A study in the analysis of stationary time series}, Stockholm, 19338, 2nd ed. 1954.
     

       \end{thebibliography}
      \end{document}